\documentclass[a4paper,12pt]{amsart}
\usepackage{amssymb}
\usepackage{xypic}
\usepackage[all,cmtip]{xy}
\usepackage{amsmath}
\usepackage{chemarrow}
\usepackage[colorlinks, 
pdfborder=001, 
linkcolor=cyan,
anchorcolor=magenta
citecolor=green
]{hyperref}
\usepackage{mathrsfs}
\usepackage{wasysym}
\usepackage{titletoc}
\usepackage{bm}
\usepackage{geometry}
\geometry{left=2cm,right=2cm,top=2.5cm,bottom=2.5cm}
\usepackage{colordvi}
\usepackage{color}

\allowdisplaybreaks

\DeclareMathOperator{\A}{\mathcal{A}}

\DeclareMathOperator{\Aut}{Aut}

\DeclareMathOperator{\D}{\mathcal{D}}

\DeclareMathOperator{\End}{End}
\DeclareMathOperator{\Ext}{Ext}

\DeclareMathOperator{\GKdim}{GKdim}

\DeclareMathOperator{\hdet}{hdet}

\DeclareMathOperator{\Hom}{Hom}

\DeclareMathOperator{\id}{id}

\DeclareMathOperator{\kk}{\Bbbk}

\DeclareMathOperator{\Mod}{Mod}
\DeclareMathOperator{\N}{\mathbb{N}}
\DeclareMathOperator{\nr}{\mathrm{nr}}

\DeclareMathOperator{\rank}{rank}

\DeclareMathOperator{\RHom}{\textbf{R}Hom}
\DeclareMathOperator{\rhu}{\rightharpoonup}

\DeclareMathOperator{\tr}{tr}

\DeclareMathOperator{\Z}{\mathbb{Z}}

\newcommand{\To}{\longrightarrow}

\numberwithin{equation}{section}

\theoremstyle{definition}

\newtheorem{thm}{Theorem}[section]
\newtheorem{prop}[thm]{Proposition}
\newtheorem{lem}[thm]{Lemma}

\newtheorem{cor}[thm]{Corollary}
\newtheorem{defn}[thm]{Definition}
\newtheorem{rk}[thm]{Remark}

\newtheorem{ex}[thm]{Example}

\begin{document}

\title{A note on the discriminant of reflection Hopf algebras}


\author{Ruipeng Zhu}
\address{Department of Mathematics, Southern University of Science and Technology, Shenzhen, Guangdong 518055, China}
\email{zhurp@sustech.edu.cn}

\begin{abstract}
%
	We provide a formula for commputing the discriminant of skew Calabi-Yau algebra over a central Calabi-Yau algebra.
	This method is applied to study the Jacobian and discriminant for reflection Hopf algebras.
\end{abstract}
\subjclass[2020]{
	11R29 
	12W22 
	16G30 
}

\keywords{Discriminant, Frobenius algebra, Artin-Schelter regular algebra, Reflection Hopf algebra}

\thanks{}

\maketitle



\section*{Introduction}



Recently, the discriminant has been used to determine the automorphism group of some PI algebras \cite{CPWZ1, CPWZ3}, and solve isomorphism problem \cite{CPWZ2} and Zariski cancellation proplem \cite{BZ, LeWZ, LWZ}.
Despite the usefulness of the discriminant, the computation of the discriminant turns out to be a rather diffculty problem, see \cite{CYZ} for example.
Nguyen, Trampel and Yakimov presented a general method for computing the discriminants of algebras in \cite{NTY}, by associating discriminants of noncommutative algebras with Poisson geometry.
In \cite{LY}, Levitt and Yakimov maked use of techniques from quantum cluster algebras to derive explicit formulas for the discriminants of quantized Weyl algebras at roots of unity.

Almost all these algebras are skew Calabi-Yau algebras which are finitely generated modules over central subalgebras. It worth pointing out that these algebras are Frobenius algebras over commutative Calabi-Yau algebras.

\begin{lem}[Corollary \ref{sCY-over-com-CY-is-Frob-alg}] \label{key-lem}
	Let $R$ be an affine Calabi-Yau algebra of dimension $d$ and $A$ be a module-finite $R$-algbera. If $A$ is a skew Calabi-Yau algebra of dimension $d$, then $A$ is a Frobenius $R$-algebra.
\end{lem}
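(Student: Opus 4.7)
The plan is to use the theory of rigid dualizing complexes to extract the Frobenius structure from the matching of Calabi--Yau data on $R$ with the skew Calabi--Yau data on $A$.

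First, since $R$ is a commutative affine Calabi--Yau algebra of dimension $d$, homological smoothness forces $R$ to be regular of Krull dimension $d$, and its rigid dualizing complex is $R[d]$. Because $A$ is module-finite over the central subalgebra $R$, the change-of-rings theorem for rigid dualizing complexes (Yekutieli--Zhang) identifies the rigid dualizing complex of $A$ with $\RHom_R(A,R)[d]$ in the derived category of $A$-bimodules.

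On the other hand, the hypothesis that $A$ is skew Calabi--Yau of dimension $d$ realises its rigid dualizing complex as ${}^{1}A^{\mu}[d]$, where $\mu$ is the Nakayama automorphism of $A$. Uniqueness of rigid dualizing complexes therefore yields an $A$-bimodule isomorphism
\[
\RHom_R(A,R)\;\cong\;{}^{1}A^{\mu}.
\]
Because the right-hand side is concentrated in cohomological degree $0$, this forces $\Ext^{i}_{R}(A,R)=0$ for every $i>0$ (so $A$ is maximal Cohen--Macaulay over $R$) and upgrades to an honest bimodule isomorphism $\Hom_R(A,R)\cong{}^{1}A^{\mu}$.

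Finally, the regularity of $R$ together with the Auslander--Buchsbaum formula applied at each prime $\p$ of $R$ gives $\pd_{R_{\p}}(A_{\p})=0$, so $A$ is projective as an $R$-module. Forgetting the right action, the chain $A\cong{}^{1}A^{\mu}\cong\Hom_R(A,R)$ of left $A$-module isomorphisms exhibits $A$ as a Frobenius $R$-algebra. The main obstacle is tracking bimodule structures through the derived-category identifications: one has to check that the isomorphism supplied by uniqueness of rigid dualizing complexes respects the two-sided $A$-action (and not merely the left structure), and verify that the change-of-rings theorem genuinely applies in the noncommutative module-finite setting considered here.
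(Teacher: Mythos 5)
Your proposal follows essentially the same route as the paper: identify the rigid dualizing complex of $A$ both as $\RHom_R(A,R)[d]$ (change of rings from the Calabi--Yau base $R$) and as ${}^{\mu}A[d]$ (from the skew Calabi--Yau structure), invoke uniqueness of rigid dualizing complexes to obtain the bimodule isomorphism $\Hom_R(A,R)\cong{}^{\mu}A$ together with vanishing of the higher $\Ext$'s, and then deduce projectivity of $A_R$. The only cosmetic divergence is the last step, where you use depth and Auslander--Buchsbaum while the paper argues via $\Ext^n_R(A,F)\cong F\otimes_R\Ext^n_R(A,R)$ for a free module $F$; both work, and the bimodule-structure concern you flag is already taken care of because uniqueness of rigid dualizing complexes is an isomorphism in $\D(\Mod A^e)$.
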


The discriminant of a Frobenius algebra $A$ is the norm of the different of $A$, see Lemma \ref{det(l-multi)}.

Throughout the rest of this paper, $\kk$ is a base field with char $\kk = 0$, and all vector spaces, algebras and Hopf algebras are over $\kk$. Unadorned $\otimes$ means $\otimes_{\kk}$ and $\Hom$ means $\Hom_{\kk}$. 
$H$ will stand for a Hopf algebra $(H, \Delta, \varepsilon)$ with bijective antipode $S$. We use the Sweedler notation $\Delta(h) = \sum_{(h)} h_1 \otimes h_2$ for all $h \in H$.
We recommend \cite{Mon} as a basic reference for the theory of Hopf algebras.

We are interested in the following algebra which comes from the noncommutative invariant theory. Let $H$ be a finite-dimensional semisimple Hopf algebra, and $A$ be a Noetherian connected graded Artin-Schelter regular domain \cite{AS87}, which are skew Calabi-Yau algebras by \cite[Lemma 1.2]{RRZ}. Suppose that $H$ acts homogeneously and inner faithfully on $A$ such that $A$ is a left $H$-module algebra.
Then $H$ is called a {\it reflection Hopf algebra} or {\it reflection quantum group} \cite[Definition 3.2]{KKZ2} if the fixed subring $A^H$ is again Artin-Schelter regular.
Kirkman and Zhang \cite{KZ}, introduced some noncommutative version of Jacobian $\mathfrak{j}_{A, H}$, reflection arrangement $\mathfrak{a}_{A, H}$ and discriminant $\delta_{A, H}$ of the $H$-action on $A$, see Definition \ref{Jacb-ref-arrang-disc}.

When $A^H$ is central in $A$ and $H$ is a dual reflection group, then $\delta_{A, H}$ and the noncommutative discriminant have the same prime radical \cite[Theorem 0.7]{KZ}.
Our main result is to relate the noncommutative discriminant to the Jacobian of a general Hopf action.

\begin{thm}[Theorem \ref{main-thm}]\label{intro-main-thm 1}
	Suppose that $H$ acts on $A$ as a reflection Hopf algebra, and that $R:=A^H$ is central in $A$. Then the discriminant $d(A, R; \tr) =_{\kk^{\times}} \mathfrak{j}_{A, H}^n$ and $\sqrt{(\delta_{A, H})} = \sqrt{(d(A, R; \tr))}$,
	where $n$ is the rank of $R$-module $A$.
\end{thm}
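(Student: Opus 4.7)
The plan is to prove the two assertions sequentially, first reducing $d(A,R;\tr)$ to the norm of a generator of the different of the Frobenius extension $R \subset A$, then identifying that generator with $\mathfrak{j}_{A,H}$, and finally treating the radical equality.

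First I would invoke Lemma \ref{key-lem}. Since $R = A^H$ is commutative AS regular, it is an affine Calabi-Yau algebra of some dimension $d$; the reflection Hopf algebra hypothesis ensures that $A$ is module-finite over $R$; and $A$ itself is skew Calabi-Yau of the same dimension $d$. The lemma then gives that $A$ is a Frobenius $R$-algebra. Next, appealing to Lemma \ref{det(l-multi)}, which expresses the discriminant of a Frobenius algebra as the norm of its different, I obtain $d(A, R; \tr) =_{\kk^{\times}} \nr(\mathfrak{d})$, where $\mathfrak{d} \in A$ generates the different ideal of $A$ over $R$.

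The core step is to show $\mathfrak{d} =_{\kk^{\times}} \mathfrak{j}_{A,H}$. Both quantities measure the ramification of $R \hookrightarrow A$, but via different packages: the different is intrinsic to the Frobenius trace form, while $\mathfrak{j}_{A,H}$ is built explicitly from Hopf-theoretic ``partial derivatives'' using a nonzero integral in $H$. I plan to compute a dual $R$-basis of $A$ against the trace form, extract a concrete generator of the different from this dual-basis data, and match it with the defining expression of $\mathfrak{j}_{A,H}$ in Definition \ref{Jacb-ref-arrang-disc}. This is the \emph{main obstacle} of the proof, since it requires reconciling categorical Frobenius duality with explicit Hopf-action formulas, and will likely proceed via the Nakayama automorphism of the Frobenius structure induced by $\tr$ together with a semisimplicity argument on $H$.

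Once the identification is secured, I note that $\mathfrak{j}_{A,H}$ is a semi-invariant for the $H$-action and thus a normal element of $A$. Combined with the freeness of $A$ as a rank-$n$ module over the central subalgebra $R$, a standard computation for a normal element over a central base gives $\nr(\mathfrak{j}_{A,H}) = \det(l_{\mathfrak{j}_{A,H}}) =_{\kk^{\times}} \mathfrak{j}_{A,H}^n$, yielding the first equality $d(A,R;\tr) =_{\kk^{\times}} \mathfrak{j}_{A,H}^n$. For the radical identity, this first formula produces $\sqrt{(d(A,R;\tr))} = \sqrt{(\mathfrak{j}_{A,H}^n)} = \sqrt{(\mathfrak{j}_{A,H})}$, and combining this with the relation $\sqrt{(\delta_{A,H})} = \sqrt{(\mathfrak{j}_{A,H})}$ (modelled on the classical identity $\delta = \mathfrak{j}^2$ up to scalar for reflection group invariants, and available in this generality as an analogue of \cite[Theorem 0.7]{KZ} once $A^H$ is central) gives the second assertion.
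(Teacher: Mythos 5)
Your skeleton --- pass to the Frobenius $R$-algebra structure, express $d(A,R;\tr)$ as the norm of the different via Lemma \ref{det(l-multi)}, identify the different with $\mathfrak{j}_{A,H}$, then compute the norm --- is exactly the paper's, but the two steps you defer are where all the content lies, and one of them rests on a false general principle. The identification of the different with $\mathfrak{j}_{A,H}$ is only announced as a plan. The paper's route is concrete: show the Hattori--Stallings trace $\tr\colon A\to R$ is an $H$-module map (Lemma \ref{tr-H-mor}); show the Frobenius homomorphism $\theta$ generating $\Hom_R(A,R)$ satisfies $h\rhu\theta=\hdet(h)\theta$ (Lemma \ref{lem2}, proved by passing to the covariant ring $\bar A=A/R_{>0}A$ and using $\hdet_A=\hdet_{\bar A}$); deduce that the element $\omega$ with $\tr=\theta\cdot\omega$ is a $\hdet^{-1}$-semi-invariant, i.e.\ $\omega\in A^{\hdet^{-1}}=R\,\mathfrak{j}_{A,H}$; and finally match degrees using $\deg\mathfrak{j}_{A,H}=\deg(h_A(t)h_{A^H}(t)^{-1})$ to conclude $\omega=_{\kk^{\times}}\mathfrak{j}_{A,H}$. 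None of this is supplied in your proposal, and it is the heart of the theorem.

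More seriously, your assertion that ``a standard computation for a normal element over a central base'' gives $\nr(\mathfrak{j}_{A,H})=_{\kk^{\times}}\mathfrak{j}_{A,H}^{\,n}$ is not a theorem: take $A=\kk[x,y]$, $R=\kk[x+y,xy]$ and the homogeneous (trivially normal) element $\omega=x$; then $\nr(\omega)=xy$, which is not $x^{2}$ up to a scalar. Normality is not the operative hypothesis. What the paper uses is that $\hdet$ is a group-like element of the finite-dimensional Hopf algebra $H^{*}$, hence of finite order $s$, so that $\mathfrak{j}_{A,H}^{\,s}\in A^{\varepsilon}=R$; then, with $K=\mathrm{Frac}(R)$ and $Q=A\otimes_RK$ a division ring by Posner's theorem, the minimal polynomial of $\mathfrak{j}_{A,H}$ over $K$ divides $X^{s}-\mathfrak{j}_{A,H}^{\,s}$ and (using $\mathrm{char}\,\kk=0$) is forced to equal $X^{r}-\mathfrak{j}_{A,H}^{\,r}$, whence the characteristic polynomial evaluated at $0$ is $\pm\mathfrak{j}_{A,H}^{\,rk}=\pm\mathfrak{j}_{A,H}^{\,n}$. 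Without the fact that a power of $\mathfrak{j}_{A,H}$ is central your step fails. Finally, the radical equality should be derived from Theorem \ref{Jacob-thm}(4) (that $\mathfrak{a}_{A,H}$ divides $\mathfrak{j}_{A,H}$, giving mutual divisibility between $\delta_{A,H}$ and powers of $d(A,R;\tr)$), not from an unproved ``analogue of \cite[Theorem 0.7]{KZ}''.
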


We also study the discriminant of the smash product.

\begin{thm}[Theorem \ref{disc-smash-prod}]\label{intro-main-thm 2}
	Suppose that $H$ acts on $A$ as a reflection Hopf algebra, and that $R:=A^H$ is central in $A$. Then the discriminant $d(A\#H, R; \tr) = \mathfrak{j}_{A, H}^{nm}$, where $n$ is the rank of $R$-module $A$ and $m = \dim_{\kk}(H)$.
\end{thm}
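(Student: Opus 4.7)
My plan is to compute $d(A\#H, R; \tr)$ directly by writing the bilinear trace form in a product basis and extracting a block determinant that reduces to Theorem~\ref{intro-main-thm 1} combined with a unit contribution from $H$.

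First, I would set up the Frobenius structure: since $H$ is finite-dimensional semisimple, $A\#H$ remains skew Calabi-Yau of the same dimension as $A$ with $R=A^H$ central in $A\#H$, so Lemma~\ref{key-lem} makes $A\#H$ a Frobenius $R$-algebra. Fixing an $R$-basis $\{a_1,\dots,a_n\}$ of $A$ and a $\kk$-basis $\{h_1,\dots,h_m\}$ of $H$ yields an $R$-basis $\{a_i\#h_j\}$ of $A\#H$ of rank $nm$. In the group case $H=\kk G$, a direct computation of left multiplication on $A\#G$ shows
\[
\tr_{A\#G/R}(a\# g)\;=\;m\,\delta_{g,e}\,\tr_{A/R}(a),
\]
so the Gram entry at position $((i,j),(k,l))$ equals $m\,\delta_{g_l,g_j^{-1}}\,\tr_{A/R}\bigl(a_i(g_j\cdot a_k)\bigr)$. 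This endows the $(nm)\times(nm)$ Gram matrix with a block-permutation structure: the nonzero $n\times n$ blocks appear only at positions $(j,j^{-1})$, and each equals $m\,T\,C^{(j)}$, where $T=(\tr_{A/R}(a_ia_k))$ has determinant $d(A,R;\tr)$ and $C^{(j)}$ is the matrix of the $g_j$-action on $A$ in the basis $\{a_i\}$.

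Taking the block determinant then yields
\[
d(A\#H,R;\tr)\;=\;\pm\,m^{nm}\,d(A,R;\tr)^m\prod_{g\in G}\det(g|_A),
\]
and $\prod_g\det(g|_A)$ is a unit in $R$, since pairing $g$ with $g^{-1}$ contributes $\det(g)\det(g^{-1})=1$. Substituting $d(A,R;\tr)=\mathfrak{j}_{A,H}^n$ from Theorem~\ref{intro-main-thm 1} then gives the conclusion up to a unit in $\kk^\times$. For a general semisimple $H$, the pairing $g\leftrightarrow g^{-1}$ is replaced by the Frobenius pairing on $H$ induced by an integral $\lambda\in H^*$: choosing a basis $\{h_j\}$ of $H$ together with its $\lambda$-dual basis, the same block-permutation form will persist, with the $H$-part contributing only a unit by nondegeneracy of $\lambda$ (equivalent to semisimplicity of $H$) and bijectivity of the antipode.

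The main obstacle will be handling the Sweedler comultiplication in the non-cocommutative case, where the Gram entry becomes
\[
\sum_{(h_j)}\tr_{A/R}\!\bigl(a_i(h_{j,(1)}\!\cdot\! a_k)\bigr)\,\lambda(h_{j,(2)}h_l).
\]
Showing that this still factors block-wise will require a basis adapted to the Frobenius pairing together with properties of the Casimir element $\sum_j h_j\otimes h^j$ and the involutivity $S^2=\id$ that holds for semisimple $H$ by the Larson--Radford theorem.
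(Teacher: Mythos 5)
Your computation for the group case $H=\kk G$ is correct and self-contained: the Gram matrix in the basis $\{a_i\#g_j\}$ really is a block permutation matrix with blocks $m\,T\,C^{(j)}$ supported at $(j,j^{-1})$, the product $\prod_g\det(g|_A)$ is $\pm 1$, and combining with Theorem \ref{main-thm} gives the claim. That part is a legitimate, more elementary route than the paper's for reflection \emph{groups}.

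The gap is in the general semisimple case, which is the actual content of the theorem. As you yourself observe, the Gram entry
$\sum_{(h_j)}\tr_{A/R}\bigl(a_i(h_{j,(1)}\rhu a_k)\bigr)\,\lambda(h_{j,(2)}h_l)$
does not split into an $A$-factor times an $H$-factor, because the Sweedler components entangle the two tensor legs; and no amount of massaging the Casimir element or invoking $S^2=\id$ in the basis $\{a_i\#h_j\}$ will make it split. You also assume without justification that $\tr_{(A\#H)/R}(a\#h)=\tr_{A/R}(a)\lambda(h)$; this trace identity is itself nontrivial (it uses the cyclic property only modulo $[A\#H,A\#H]$, separability of $H$, and $\tr_H(h_L)=\tr_H(h_R)$ — this is Lemma \ref{trace-Galois-extension} in the paper, and one must further check that the error term in $A\cap[A\#H,A\#H]$ is killed by $\tr_{A/R}$). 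The paper's resolution, in Lemma \ref{Hopf-Galois-disc}, is to abandon the naive product basis: writing $A\subseteq B=A\#H$ as a Hopf--Galois extension, the integral $t$ produces a Frobenius system $(\theta,x_i,y_i)$ with $\theta(y_ix_j)=\delta_{ij}$ (Lemma \ref{H-Galois-Frob}), and then evaluating the trace form against the \emph{two different} $R$-bases $\{z_{i_2}x_{i_1}\}$ and $\{y_{j_1}z_{j_2}\}$ gives $\tr_{B_R}(z_{i_2}x_{i_1}y_{j_1}z_{j_2})=\delta_{i_1j_1}\tr_{A_R}(z_{j_2}z_{i_2})$, so the matrix is genuinely block-diagonal and $d(B,R;\tr)=_{\kk^\times}d(A,R;\tr)^m$ with no extra unit factor to control. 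Your phrase ``a basis adapted to the Frobenius pairing'' points in this direction, but the choice of two distinct bases built from the Galois map, together with the trace identity above, is precisely the missing step; without it the argument does not close for noncommutative or non-cocommutative $H$.
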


The organization of the paper is as follows.
In section \ref{section 1}, we go over some notation and recall some preliminary results. We also describes the discriminant of Frobenius algebra in this section.
In section \ref{section 2}, the definition and basic results about skew Calabi-Yau algebras are recalled first, and Lemma \ref{key-lem} is also proved.
The necessary definition and background for reflection Hopf algebra are in section \ref{section 3}.
In section \ref{section 4}, we prove the Theorem \ref{intro-main-thm 1} which connects the discriminant with the invariant of the Hopf action.
We prove a more general formula regarding discriminants of Hopf Galois extensions from which the Theorem \ref{intro-main-thm 2} follows in section \ref{section 5}.

\section{Preliminaries}\label{section 1}

We will employ the following notation. Let $R$ be a ring, $M$ be an $R$-$R$-bimodule, and $\mu, \nu$ be automorphisms of $R$. Let ${^{\mu}M^{\nu}}$ denote the induced $R$-$R$-bimodule such that ${^{\mu}M^{\nu}} = M$ as an Abelian group, and the module structure given by
$$a \cdot m \cdot b = \mu(a)m\nu(b), \qquad \text{ for all } a, b \in R, \, m \in {^{\mu}M^{\nu}}(=M).$$
If $\mu$ (resp. $\nu$) is the identity map of $R$, then we use $M^{\nu}$ (resp. $^{\mu}M$) instead of ${^{\mu}M^{\nu}}$.

\subsection{Hattori-Stallings trace map and discriminant}

Let $R$ be a ring, and $P$ be a finitely generated projective right $R$-module. Then there is an isomorphism
$$\varphi_P: P \otimes_R \Hom_{R}(P, R) \To \End_R(P)(:= \Hom_R(P, P)), \qquad x \otimes f \mapsto \big(y \mapsto xf(y) \big),$$
and a morphism
$$\psi_P: P \otimes_R \Hom_{R}(P, R) \To R/[R,R], \qquad x \otimes f \mapsto f(x)+[R,R],$$
where $[R, R]$ is the additive subgroup of $R$ generated by the elements $ab-ba$ with $a, b \in R$. Then the {\it Hattori-Stallings map} of $P$ is defined by
$$\tr_{P_R}: = \psi_P \circ \varphi_P^{-1} : \End_R(P) \To R/[R, R].$$
We will write it simply $\tr_P$ or $\tr$ when no confusion can arise. And Hattori-Stallings map has the following properties. 

\begin{lem}\cite{Hat, Sta}\label{HS-trace}
	Let $P$ be a finitely generated projective right $R$-module.
	\begin{enumerate}
		\item Let $x_1, \dots, x_n \in P$ and $x_1^*, \dots, x_n^* \in P^*:= \Hom_R(P, R)$ be a dual basis for $P_R$. Then
		$$\tr_P(f) = \sum\limits_{i=1}^{n} x_i^*(f(x_i)) + [R, R], \text{ for any }f \in \End_R(P).$$
		\item If $f, g \in \End_R(P)$, then $\tr_P(f+g) = \tr_P(f) + \tr_P(g)$.
		\item For any $f,g \in \End_R(P)$, then $\tr_P(gf) = \tr_P(fg)$.
        \item If $P$ is a free $R$-module of rank $n$, then $\tr_P(\id_P) = n \cdot 1_R + [R, R]$, where $\id_P$ is the identity map of $P$.
	\end{enumerate}
\end{lem}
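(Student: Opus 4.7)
The plan is to base everything on the dual basis lemma for finitely generated projective modules: there exist $x_1, \dots, x_n \in P$ and $x_1^*, \dots, x_n^* \in P^*$ such that $x = \sum_{i} x_i \, x_i^*(x)$ for every $x \in P$. This data is precisely what is needed to describe $\varphi_P^{-1}$ explicitly.

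First I would prove (1). Given $f \in \End_R(P)$, I claim $\varphi_P^{-1}(f) = \sum_i f(x_i) \otimes x_i^*$. To check this, apply $\varphi_P$ to the right-hand side and evaluate on $y \in P$: it gives $\sum_i f(x_i) x_i^*(y) = f\!\left(\sum_i x_i x_i^*(y)\right) = f(y)$, where the first equality uses $R$-linearity of $f$ and the second uses the dual basis identity. Applying $\psi_P$ then yields $\tr_P(f) = \sum_i x_i^*(f(x_i)) + [R,R]$, which is (1). Part (2) follows immediately, since both $\varphi_P^{-1}$ and $\psi_P$ are additive.

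For (3) the trace property, the dual basis formula from (1) gives
\begin{equation*}
\tr_P(gf) = \sum_i x_i^*(g(f(x_i))) + [R,R].
\end{equation*}
Expanding $f(x_i) = \sum_j x_j\, x_j^*(f(x_i))$, applying $g$, and using that $x_i^*$ is a right $R$-module map, one obtains $\tr_P(gf) = \sum_{i,j} x_i^*(g(x_j))\, x_j^*(f(x_i)) + [R,R]$. The analogous expansion for $\tr_P(fg)$ yields $\sum_{i,j} x_j^*(f(x_i))\, x_i^*(g(x_j)) + [R,R]$, which differs from the previous expression by a sum of commutators $ab - ba$, hence the two are equal modulo $[R,R]$. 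Finally, for (4), take the standard basis $x_1,\dots,x_n$ of a free module of rank $n$ with dual basis $x_i^*(x_j) = \delta_{ij} \cdot 1_R$; then (1) gives $\tr_P(\id_P) = \sum_i x_i^*(x_i) + [R,R] = n \cdot 1_R + [R,R]$.

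The only genuinely non-trivial point is (3): one has to be careful that $x_i^*$ is a \emph{right} $R$-module homomorphism so that scalars pull out on the correct side, and then recognize the resulting double sum as differing from its transposed version by commutators. The other three parts are essentially bookkeeping once the dual basis identity and the explicit form of $\varphi_P^{-1}$ are in hand.
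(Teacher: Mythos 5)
Your argument is correct and complete; the paper itself gives no proof of this lemma, only the citations to Hattori and Stallings, and what you have written is exactly the standard argument from those sources (explicit description of $\varphi_P^{-1}$ via a dual basis, then the commutator computation for the trace property). The one point worth being explicit about, which you handle correctly, is that in part (3) the scalars $x_j^*(f(x_i))$ pull out on the right through $g$ and through $x_i^*$, so the two double sums are genuinely transposes of each other and differ by elements of $[R,R]$.
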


Let $R \to A$ be a ring homomorphism such that $A$ is a finitely generated projective $R$-module.
Left multiplication defines a natural embedding $\iota: A \to \End_R(A), \, a \mapsto a_L$. The composition $\tr_A \iota$ is also called {\it Hattori-Stallings map}, and denoted by $\tr_A$.

\begin{lem}\label{composition-trace}
	Retain the notation as above. Let $P$ be a finitely generated projective right $A$-module. Then for any $f \in \End_A(P)$,
	$$\tr_{P_R}(f) = \tr_{A}(\tr_{P_A}(f)).$$
\end{lem}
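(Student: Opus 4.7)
The plan is to choose explicit dual bases for the three projective modules in play, build a composite dual basis for $P_R$ out of them, and then reduce both sides to the same sum by exploiting the right $A$-linearity of $f$. I would fix a dual basis $\{(y_j, y_j^*)\}$ for $P$ over $A$ and a dual basis $\{(x_i, x_i^*)\}$ for $A$ over $R$. The first key step is to verify that $\{(y_j x_i, \, \phi_{ij})\}$, with $\phi_{ij} \in \Hom_R(P, R)$ defined by $\phi_{ij}(p) = x_i^*(y_j^*(p))$, is a dual basis for $P$ as a right $R$-module; this follows by starting from $p = \sum_j y_j \cdot y_j^*(p)$ and substituting the $R$-expansion $y_j^*(p) = \sum_i x_i \cdot x_i^*(y_j^*(p))$ of $y_j^*(p) \in A$. $R$-linearity of $\phi_{ij}$ uses only that $y_j^*$ is $A$-linear and $x_i^*$ is $R$-linear.

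Next I would apply Lemma~\ref{HS-trace}(1) to compute $\tr_{P_R}(f)$ using this composite dual basis. Because $f$ is right $A$-linear, $f(y_j x_i) = f(y_j) x_i$, so setting $a := \sum_j y_j^*(f(y_j)) \in A$, a representative of $\tr_{P_A}(f)$ modulo $[A,A]$, the sum collapses to
\begin{align*}
\tr_{P_R}(f) = \sum_{i,j} x_i^*\bigl(y_j^*(f(y_j)) \, x_i\bigr) + [R,R] = \sum_i x_i^*(a \, x_i) + [R,R].
\end{align*}
A second application of Lemma~\ref{HS-trace}(1), now to the endomorphism $a_L = \iota(a) \in \End_R(A)$ read against the $R$-dual basis $\{(x_i, x_i^*)\}$, identifies the right-hand side with $\tr_A(a_L) = \tr_A(a)$, giving the required equality.

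The one point requiring care is that $\tr_{P_A}(f)$ lives in $A/[A,A]$, so the statement implicitly requires the map $a \mapsto \tr_A(a_L)$ to descend to a well-defined map $A/[A,A] \to R/[R,R]$. This is immediate from Lemma~\ref{HS-trace}(3): for $b, c \in A$ one has $(bc)_L = b_L c_L$ and $(cb)_L = c_L b_L$, hence $\tr_A((bc)_L) = \tr_A(b_L c_L) = \tr_A(c_L b_L) = \tr_A((cb)_L)$ in $R/[R,R]$. I do not foresee any substantive obstacle; the only nonroutine move is engineering the composite $R$-dual basis of $P$, after which both sides of the claimed identity are read off from the same Hattori--Stallings formula.
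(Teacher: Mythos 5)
Your proposal is correct and follows essentially the same route as the paper's proof: build the composite dual basis $\{(y_jx_i,\ x_i^*\circ y_j^*)\}$ for $P_R$ from dual bases of $P_A$ and $A_R$, apply Lemma \ref{HS-trace}(1) twice using the right $A$-linearity of $f$, and invoke Lemma \ref{HS-trace}(3) for the well-definedness of $\tr_A$ on $A/[A,A]$. No gaps.
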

\begin{proof}
	Since $\tr_{A}([A, A]) = 0$ by Lemma \ref{HS-trace} (3), then the composition map $\tr_A \circ \tr_{P_A}$ is well defined.
	Let $x_1, \dots, x_n \in P$ and $x_1^*, \dots, x_n^* \in \Hom_A(P, A)$ be a dual basis of projective module $P_A$.
	Let $y_1, \cdots, y_m \in A$ and $y_1^*, \cdots, y_m^*\in \Hom_R(A, R)$ be a dual basis of projective module $A_R$. For any $x \in P$,
	$$\sum_{i,j} x_jy_i y^*_i(x^*_j(x)) = \sum_{j=1}^m x_jx^*_j(x) = x.$$
	Hence, $\{x_jy_i\}$ and $\{y^*_ix^*_j\}$ is a dual basis of $P_R$.
	Then, for any $f \in \End_A(P)$,
	\begin{align*}
		\tr_{P_A}(f) & = \sum_{i,j} y^*_i(x^*_j(f(x_jy_i))) + [R, R] & \\
		& = \sum_{i,j} y^*_i(x^*_j(f(x_j))y_i) + [R, R] & \\
		& = \tr_A(\sum_{j=1}^m x^*_j(f(x_j))) + [R, R] & \text{by Lemma \ref{HS-trace} (1)} \\
		& = \tr_A(\tr_{P_A}(f)) + [R, R]. &
	\end{align*}
\end{proof}

In this paper, we always consider the following trace map over a commutative ring. Let $R$ be a commutative ring, and $A$ be an $R$-algebra. An $R$-linear map $\tr: A \to R$ is called a {\it trace map} from $A$ to $R$, if $\tr(xy) = \tr(yx)$ for any $x, y \in A$.
Obviously, if $A$ is a finitely generated projective $R$-module, then the Hattori-Stallings map $\tr_{A}: A \to R$ is a trace map from $A$ to $R$. The trace map $\tr_A$ is called {\it Hattori-Stallings trace map}.

Let $R$ be a commutative ring, and $R^{\times}$ be the set of invertible elements in $R$. For any $x, y \in R$, we use the notation $x =_{R^{\times}} y$ to indicate that $x = uy$ for some $u \in R^{\times}$.

\begin{defn}\label{disc-defn}
	Let $R$ be a commutative ring, $A$ be an $R$-algebra, and $\tr : A \to R$ be a trace map from $A$ to $R$. 
	Suppose that $A$ is a free $R$-module with an $R$-basis $\{ x_1, \cdots, x_m \}$. Then the {\it discriminant} of $A$ over $R$ with respect to $\tr$ is defined to be the determinant
	$$d(A, R; \tr) =_{R^{\times}} \det([\tr(x_ix_j)]^m_{i, j =1}).$$
\end{defn}

Note that the discriminant $d(A, R; \tr)$ is independent of bases of $A$ up to multiplication by an invertible element in $R$.



\subsection{Frobenius extensions}

Let's recall some definitions about Frobenius extensions.

\begin{defn}
	Let $R$ be a (graded) subring of a (graded) ring $A$. Then $R \subseteq A$ is called a {\it (graded) Frobenius extension} if
	\begin{enumerate}
		\item $A$ is a finitely generated (graded) projective right $R$-module,
		\item and there is an (graded) $R$-$A$-bimodule isomorphism $\Theta: A \stackrel{\cong}{\To} \Hom_R(A, R)$.
	\end{enumerate}
\end{defn}

Many examples and basic properties of Frobenius extensions can be found in \cite{Kad}.

Let $R \subseteq A$ be a Frobenius extension, and $x_1, \dots, x_n \in A$ and $f_1, \dots, f_n \in \Hom_R(A, R)$ be a dual basis of $A_R$.
Write $\theta = \Theta(1)$.
By definition, $\theta$ is an $R$-$R$-bimodule morphism, and there exists $y_1, \dots, y_n \in A$ such that $f_i = \Theta(y_i) = \theta \cdot y_i$. Then $\sum_{i=1}^n x_i \theta(y_ia) = a$ for any $a \in A$. It is easy to see that $\Theta(\sum_{i=1}^n \theta(ax_i)y_i) = \theta \cdot a$. So $\sum_{i=1}^n \theta(ax_i)y_i = a$ for any $a \in A$.

On the other hand, for any ring extension $R \subseteq A$, if there exists $x_1, \dots, x_n, y_1, \dots, y_n \in A$ and an $R$-$R$-bimodule morphism $\theta: A \to R$ such that for any $a \in A$,
$$\sum_{i=1}^n x_i \theta(y_ia) = a = \sum_{i=1}^n \theta(ax_i)y_i$$
then $A$ is a Frobenius extension of $R$.


Then $(\theta, x_i, y_i)$ is called a {\it Frobenius system}, $\theta$ a {\it Frobenius homomorphism}, $(x_i, y_i)$ a {\it dual basis}, and $\Theta: A \to \Hom_R(A, R), \, a \mapsto \theta \cdot a$ is a {\it right Frobenius isomorphism}. By definition,
\begin{equation}\label{Frob-trace}
	\tr: A \To R/[R, R], \qquad a \mapsto \sum_{i=1}^N \theta(y_iax_i) + [R, R]
\end{equation}
is the Hattori-Stallings trace map by Lemma \ref{HS-trace} (1).

Clearly, the definition is left-right symmetric, that is, $R \subseteq A$ is a Frobenius extension if and only if $R^{op} \subseteq A^{op}$ is a Frobenius extension, where $A^{op}$ and $R^{op}$ are the opposite rings of $A$ and $R$, respectively.

Frobenius extension is a transitive notion as follow.
\begin{lem}\label{Frob-trans-lem}
	If $R \subseteq A$ and $S \subseteq R$ are Frobenius extensions, then the composite ring extension $R \subseteq A$ is also a Frobenius extension.
\end{lem}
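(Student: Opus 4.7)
The plan is to construct an explicit Frobenius system for the composite extension $S \subseteq A$ from the given Frobenius systems of $R \subseteq A$ and $S \subseteq R$, and then verify the two defining identities. This strategy exploits the characterization just proved in the excerpt: a ring extension is Frobenius precisely when one has a bimodule map $\theta$ together with elements $x_i, y_i$ satisfying the two dual-basis identities.

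First I would fix a Frobenius system $(\theta_A, \{x_i\}_{i=1}^n, \{y_i\}_{i=1}^n)$ for $R \subseteq A$, so that $\theta_A : A \to R$ is an $R$-$R$-bimodule map and $\sum_i x_i \theta_A(y_i a) = a = \sum_i \theta_A(a x_i) y_i$ for all $a \in A$. Similarly I would fix a Frobenius system $(\theta_R, \{u_j\}_{j=1}^m, \{v_j\}_{j=1}^m)$ for $S \subseteq R$. The candidate system for $S \subseteq A$ that I propose is
$$\theta := \theta_R \circ \theta_A : A \To S, \qquad \widetilde{x}_{ij} := x_i u_j, \qquad \widetilde{y}_{ij} := v_j y_i.$$
That $\theta$ is an $S$-$S$-bimodule morphism is immediate: $\theta_A$ is $R$-$R$-bilinear, hence in particular $S$-$S$-bilinear via the inclusion $S \subseteq R$, and $\theta_R$ is $S$-$S$-bilinear by hypothesis.

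Next I would verify the two dual-basis identities. For any $a \in A$, using first that $v_j \in R$ and that $\theta_A$ is left $R$-linear, then the dual-basis identity for $S \subseteq R$ applied to the element $\theta_A(y_i a) \in R$, and finally the dual-basis identity for $R \subseteq A$:
\begin{align*}
\sum_{i,j} \widetilde{x}_{ij} \, \theta(\widetilde{y}_{ij} a)
&= \sum_{i,j} x_i u_j \, \theta_R\bigl( \theta_A(v_j y_i a) \bigr)
 = \sum_{i,j} x_i u_j \, \theta_R\bigl( v_j \, \theta_A(y_i a) \bigr) \\
&= \sum_i x_i \, \theta_A(y_i a) = a.
\end{align*}
The mirror identity $\sum_{i,j} \theta(a \widetilde{x}_{ij}) \widetilde{y}_{ij} = a$ is verified in the same way, using the right $R$-linearity of $\theta_A$ to move $u_j$ out past $\theta_A$. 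By the characterization recalled in the excerpt, this proves $S \subseteq A$ is a Frobenius extension.

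The only delicate point—really the only place one can slip—is keeping track of the bimodule linearity when moving the intermediate-ring elements $u_j, v_j \in R$ through $\theta_A$; once that is done correctly the identities collapse telescopically. I do not foresee any serious obstacle, so this lemma should admit a short, computational proof along these lines.
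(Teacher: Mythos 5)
Your proof is correct; the paper states this lemma without proof (deferring to Kadison's book for basic properties of Frobenius extensions), and your composed Frobenius system $(\theta_R\circ\theta_A,\; x_iu_j,\; v_jy_i)$ together with the telescoping verification of the two dual-basis identities is exactly the standard argument. Note only that the lemma's conclusion as printed contains a typo ($R\subseteq A$ where $S\subseteq A$ is meant), which you correctly read through.
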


If $R$ is a commutative ring, then $A$ is also called a {\it Frobenius $R$-algebra}.
And there is an automorphism $\mu \in \Aut(A)$ such that the right Frobenius isomorphism
$$\Theta: {^\mu A} \To \Hom_R(A, R), \qquad a \mapsto \theta \cdot a$$
is an $A$-$A$-bimodule morphism, and $\mu$ is called the {\it Nakayama automorphism} of $A$.
The Nakayama automorphism of $A$ is not unique, but unique up to inner automorphisms of $A$.

Since the Hattori-Stallings trace map $\tr: A \to R$ is an $R$-module morphism, there exists $\omega \in A$ such that $\tr = \Theta(\omega) = \theta \cdot \omega$. 
Further, if there exists an $A$-$A$-bimodule isomorphism $\Theta: A \to \Hom_R(A, R)$, then $\tr:= \Theta(1)$ is a trace map from $A$ to $R$, and $A$ is called a {\it symmetric  Frobenius} $R$-algebra. 
It is obvious that $\End_R(P)$ is a symmetric Frobenius $R$-algebra, where $P$ is a finitely generated projective $R$-module.

Brown, Gordon and Stroppel proved that many skew Calabi-Yau algebras are Frobenius algebras over their regular central subalgebras in \cite{BGS}.

\begin{lem}\label{quotient-Frobenius}
	Let $A$ be a Frobenius $R$-algebra, and $S$ be a commutative $R$-algebra. Then $A \otimes_R S$ is a Frobenius $S$-algebra.
\end{lem}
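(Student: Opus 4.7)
The plan is to base-change the Frobenius system for $R \subseteq A$ along the structure map $R \to S$. Since $R$ is commutative and, in the present context, central in $A$, the abelian group $B := A \otimes_R S$ carries a well-defined $S$-algebra structure via $(a \otimes s)(a' \otimes s') = aa' \otimes ss'$, and base change preserves finite generation and projectivity, so $B$ is a finitely generated projective $S$-module. By hypothesis there is a Frobenius system $(\theta, x_1, \dots, x_n, y_1, \dots, y_n)$ with $\theta : A \to R$ an $R$-$R$-bilinear map satisfying $\sum_i x_i \theta(y_i a) = a = \sum_i \theta(a x_i) y_i$ for every $a \in A$.

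The first step is to define the candidate Frobenius homomorphism
\[
\theta_S : B \To S, \qquad \theta_S(a \otimes s) = \theta(a)\, s,
\]
where we identify $R \otimes_R S$ with $S$. The $R$-balancing condition $\theta_S(ar \otimes s) = \theta_S(a \otimes rs)$ is immediate from the $R$-bilinearity of $\theta$ together with the centrality of $R$, and a short direct check shows that $\theta_S$ is $S$-$S$-bilinear. I would then propose $\{x_i \otimes 1\}$ and $\{y_i \otimes 1\}$ as the dual basis for $B$ over $S$.

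Finally, the Frobenius identities for the triple $(\theta_S, x_i \otimes 1, y_i \otimes 1)$ reduce cleanly to those for $(\theta, x_i, y_i)$: for $a \in A$ and $s \in S$,
\[
\sum_i (x_i \otimes 1)\, \theta_S\bigl((y_i \otimes 1)(a \otimes s)\bigr) = \sum_i x_i \theta(y_i a) \otimes s = a \otimes s,
\]
and the right-hand identity follows by the symmetric computation, using centrality of $R$ to push $\theta(a x_i) \in R$ across the tensor factor. The only delicate point is bookkeeping for the two-sided module structure across $\otimes_R$; since this is handled entirely by the centrality of $R$, no genuine obstacle arises.
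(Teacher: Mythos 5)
Your argument is correct: base-changing the Frobenius system $(\theta, x_i, y_i)$ to $(\theta\otimes\id_S,\ x_i\otimes 1,\ y_i\otimes 1)$ and verifying the two dual-basis identities is exactly the standard proof, and the paper itself states this lemma without proof, implicitly relying on the same computation. The only point worth noting is that the existence of the base-changed Frobenius system already forces $A\otimes_R S$ to be finitely generated projective over $S$, so that part of your first paragraph is not separately needed.
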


\subsection{Different and discriminant of free Frobenius algebra}

%

%
A Frobenius $R$-algebra $A$ is called a {\it free Frobenius} $R$-algebra,  if $A$ is a free $R$-module.
\begin{defn}
	Let $A$ be a free Frobenius $R$-algebra. For any $a \in A$, we will denote by $\nr(a)$ the determinant of the left multiplication linear transformation $a_L \in \End_R(A)$. And we call $\nr(a)$ the {\it norm} of $a$.
\end{defn}

The following lemma gives a way to compute the discriminant of free Frobenius algebra.

\begin{lem}\label{det(l-multi)}
	Let $A$ be a free Frobenius $R$-algebra with a Frobenius system $(\theta, x_i, y_i)$ such that $\{x_i\}_{i=1}^n$ is a basis of free $R$-module $A$. Let $\mu$ be the Nakayama automorphism of $A$. Then
	$$\tr = \theta \cdot \omega, \quad \text{ and } \quad d(A, R; \tr) =_{R^{\times}} \nr(\omega),$$
	where $\omega = \sum \mu(x_i)y_i$ is called a {\it different} of $A$ over $R$. In particular, the different $\omega$ is a $\mu$-normal element of $A$, that is, $\mu(a) \omega = \omega a$ for any $a \in A$.
\end{lem}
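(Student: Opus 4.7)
The plan is to establish the three assertions using the defining property of the Nakayama automorphism as the sole algebraic input. First I would unpack what the Nakayama automorphism $\mu$ says: the bimodule isomorphism $\Theta: {}^\mu A \to \Hom_R(A, R)$, $\Theta(a)(b) = \theta(ab)$, is automatically right $A$-linear but only becomes left $A$-linear after twisting by $\mu$, and a short calculation shows this twisted left $A$-linearity is equivalent to the cyclicity identity
$$\theta(\mu(b) c) = \theta(cb) \quad \text{for all } b, c \in A.$$
With this identity available the first formula is immediate: starting from the Hattori--Stallings formula \eqref{Frob-trace} and applying cyclicity with $b = x_i$, $c = y_i a$ in each summand gives
$$\theta(\omega a) = \sum_{i} \theta(\mu(x_i) y_i a) = \sum_i \theta(y_i a x_i) = \tr(a),$$
so that $\tr = \theta \cdot \omega = \Theta(\omega)$.

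For the $\mu$-normality I would transport the cyclicity of $\tr$ across $\Theta$. Because $\tr$ is a trace map, the two elements $\tr \cdot b$ and $b \cdot \tr$ of $\Hom_R(A, R)$ coincide. Right $A$-linearity of $\Theta$ rewrites the first as $\Theta(\omega b)$, while the twisted left $A$-linearity rewrites the second as $\Theta(\mu(b)\omega)$. Injectivity of $\Theta$ then forces $\omega b = \mu(b)\omega$.

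For the discriminant itself I would compute the Gram matrix $D = [\tr(x_i x_j)]$ directly. Combining $\tr = \theta \cdot \omega$ with the $\mu$-normality gives $\tr(x_i x_j) = \theta(\omega x_i x_j) = \theta(\mu(x_i)\,\omega x_j)$. Writing $\omega x_j = \sum_l x_l m_{lj}$ with $m_{lj} \in R$ one has $M = [m_{lj}]$ representing $\omega_L$ and $\det M = \nr(\omega)$. Pulling the central entries $m_{lj}$ past $\theta$ and applying cyclicity once more produces $\tr(x_i x_j) = \sum_l \theta(x_l x_i)\, m_{lj}$, i.e., $D = T^{\top} M$ where $T = [\theta(x_i x_j)]$. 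A short check using the dual basis --- writing $y_j = \sum_k x_k a_{kj}$ and evaluating $\delta_{ji} = \theta(y_j x_i) = \sum_k T_{ki}\, a_{kj}$ --- shows that $T^\top [a_{kj}] = I$, so $\det T \in R^\times$ and therefore $d(A, R; \tr) = \det D =_{R^\times} \det M = \nr(\omega)$.

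The main obstacle is purely bookkeeping rather than conceptual: keeping straight the left and right $A$-module structures on $\Hom_R(A, R)$, the correct side on which $\mu$ appears in the cyclicity identity, and the distinction between $T$ and $T^{\top}$ in the matrix factorization. Every step above is a single line once the conventions are fixed, but a sign or side error in any of them would derail the identification of the two determinants.
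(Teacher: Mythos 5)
Your proof is correct and follows essentially the same route as the paper: the cyclicity identity $\theta(\mu(b)c)=\theta(cb)$ coming from the twisted bimodule isomorphism, the identification $\tr=\theta\cdot\omega$, the transport of $\tr(ab)=\tr(ba)$ through $\Theta$ to get $\mu$-normality, and a factorization of the Gram matrix. The only cosmetic difference is in the last step: the paper expands $\omega x_i$ in the dual basis $\{y_j\}$ so that $\theta(y_kx_j)=\delta_{kj}$ collapses the Gram matrix directly to the matrix of $\omega_L$, whereas you expand in $\{x_l\}$ and separately verify that $T=[\theta(x_ix_j)]$ is invertible via the dual basis --- both are valid.
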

\begin{proof}
	Recall that the right Frobenius isomorphism $\Theta: {^\mu A} \to \Hom_R(A, R), \, a \mapsto \theta \cdot a$ is an $A$-$A$-bimodule morphism. Hence $a \cdot \theta = \theta \cdot \mu(a)$ for all $a \in A$.
	Since $\tr(a) = \theta(\sum_{i=1}^n y_i a x_i)$ for any $a \in A$, then $\tr = \theta \cdot \omega$ where $\omega = \sum_{i=1}^n \mu(x_i)y_i$. Clearly, $\{y_i\}_{i=1}^n$ is also a basis of free $R$-module $A$. Write $\omega x_i = \sum_{j=1}^n \omega_{ij} y_j$ with $\omega_{ij} \in R$. Since $\theta(y_ix_j) = \delta_{ij}$ by definition, 
	then
	\begin{align*}
		d(A, R; \tr) & =_{R^{\times}} \det([\tr(x_ix_j)]_{i,j = 1}^n) \\ 
		& =_{R^{\times}} \det([\theta(\omega x_ix_j)]_{i,j = 1}^n) \\
		& =_{R^{\times}} \det([\sum_{k = 1}^n \omega_{ik} \theta(y_kx_j)]_{i,j = 1}^n) \\
		& =_{R^{\times}} \det([\omega_{ij}]_{i,j = 1}^n) \\
		& =_{R^{\times}} \nr(\omega).
	\end{align*}
	And, for any $a, b \in A$,
	$$(\theta \cdot (\omega a)) (b) = \tr(ab) = \tr(ba) = (a \cdot \theta \cdot \omega)(b) = (\theta \cdot (\mu(a) \omega))(b).$$
	It follows that $\Theta(\omega a) = \Theta(\mu(a) \omega)$, thus $\omega a = \mu(a) \omega$ since $\Theta$ is an isomorphism.
\end{proof}

Here is an example to show how to calculate the discriminant of free Frobenius algebras by using Lemma \ref{det(l-multi)}.

\begin{ex}
	Recall that the skew polynomial algebra $A:=\kk_{p_{ij}}[x_1, \dots, x_n]$ is generated by $x_i$ and subject to the relations $x_jx_i = p_{ij}x_ix_j$ for all $i < j$, where each $p_{ij}$ is a root of unity in $\kk$. Set $p_{ii} = 1$ for all $1\leq i \leq n$ and $p_{ji} = p_{ij}^{-1}$ for all $1 \leq i < j \leq n$. Let $l_1, \dots, l_n \in \Z_{+}$ such that $p_{ij}^{l_i} = 1$ for all $j$. Then $R:= \kk[x_1^{l_1}, \dots, x_n^{l_n}]$ is a central subalgebra of $A$ where $l_i \in \N$. It is clear that $\{ x_1^{i_1} \cdots x_n^{i_n} \}_{0 \leq i_s < l_s}$ is a basis of $R$-module $A$.
	There is an $R$-module morphism $\theta$ defined by, for any $s = 1, \dots, n$ and $0 \leq i_s < l_s$, $$\theta(x_1^{i_1} \cdots x_n^{i_n}) = \begin{cases}
		1, & i_s = l_s - 1 \text{ for all } i \\
		0, & \text{ otherelse}.
	\end{cases}$$
	Then $A$ is a free Frobenius $R$-algebra with a Frobenius system $(\theta, x_1^{i_1} \cdots x_n^{i_n}, x_1^{l_1-i_1-1} \cdots x_n^{l_n-i_n-1})$. By a direct computation, the Nakayama automorphism $\mu$ sends $x_i$ to $p_{1i} \cdots p_{i-1 \, i} p_{i+1 \, i} \cdots p_{n \, i} x_i$,
	$$\tr =_{\kk^{\times}} \theta \cdot (x_1^{l_1-1} \cdots x_n^{l_n-1}), \text{ and } d(A, R; \tr) =_{\kk^{\times}} (x_1^{l_1-1} \cdots x_n^{l_n-1})^{l_1 \cdots l_n}.$$
\end{ex}

\section{Skew Calabi-Yau algebras} \label{section 2}

First of all, we recall the definition of skew Calabi-Yau algebras.

For any algebra $A$, let 
$A^e = A\otimes A^{op}$ be its enveloping algebra. Then a right $A^e$-module $M$ is viewed the same as an $A$-$A$-bimodule by $a \cdot m \cdot b = m(b \otimes a)$.
\begin{defn} \label{defi-of-CY}
	An algebra $A$ is called {\it skew Calabi-Yau} of dimension  $d$, if 
	\begin{enumerate}
		\item $A$ is {\it homologically smooth}, that is, as an $A^e$-module, $A$ has a finitely generated projective resolution of finite length;
		\item there is some $\mu \in \Aut(A)$, such that, as an $A$-$A$-bimodule,
		\begin{align}\label{nak-aut}
			\Ext^i_{A^e}(A,A^e) \cong
			\begin{cases}
				0, & i \neq d \\
				A^{\mu}, & i = d.
			\end{cases}
		\end{align}
	\end{enumerate}
\end{defn}
This generalises Ginzburg's notion of a Calabi-Yau algebra \cite{Gin} which covers the case where $\mu = \id_A$. 
In general, $\mu$ is called a {\it Nakayama automorphism} of $A$ by Brown and Zhang in \cite{BrZ}.

As pointed out in \cite{BGS}, when both usages of the term ``Nakayama automorphism" are in play, they define the same map which is unique up to inner automorphism of $A$, see Lemma \ref{rigid-dualizing} for instance.

If $A$ is Calabi-Yau, then $\mu$ can be chosen as the identity map on $A$. 
If $A$ is commutative, then the Nakayama automorphism $\mu$ must be the identity map.
It is well known that the polynomial rings are Calabi-Yau algebras, and their skew group algebras are skew Calabi-Yau algebras, see \cite{WZ} for example.

\subsection{Rigid dualizing complexes and rigid Gorenstein algebras}

In this subsection, the definitions of rigid dualizing complexes and rigid Gorenstein algebras are recalled, and Lemma \ref{key-lem} is also proved.

Let $\D(\Mod A)$ be the derived category of right $A$-modules, and let $\D^*(\Mod A)$, for $* = b, +, -$, or blank, be the full subcategories of bounded, bounded below, bounded above, or unbounded complexes, respectively.
For any complex $X \in \D^{+}(\Mod A^e)$, there is a derived functor
$$\RHom_A(-, X) : \D(\Mod A^e) \To \D(\Mod A^e), \qquad Y \mapsto \RHom_A(Y, X).$$

The dualizing complexes over noncommutative rings were introduced by Yekutieli in \cite{Ye1}.

\begin{defn}
	Let $A$ be a (left and right) Noetherian algebra. Then an object $\Omega$ of $\D^b(\Mod A^e)$ is called a {\it dualizing complex} over $A$ if it satisfies the following conditions
	\begin{enumerate}
		\item $\Omega$ has finite injective dimension over $A$ and $A^{op}$.
		\item $\Omega$ has finitely generated cohomology modules over $A$ and $A^{op}$.
		\item The natural morphisms $\Phi: A \to \RHom_A(\Omega, \Omega)$ and $\Phi^{op}: A \to \RHom_{A^{op}}(\Omega, \Omega)$ are isomorphisms in $\D(\Mod A^e)$.
	\end{enumerate}
\end{defn}


\begin{defn}\cite[Definition 8.1]{V}
	Let $A$ be a Noetherian algebra. A dualizing complex $\Omega$ over $A$ is called {\it rigid} if 
	$$ \RHom_{A^e}(A,{}_A\Omega \otimes \Omega_A) \cong \Omega $$
	in $\D(\Mod A^e)$.
\end{defn}

A rigid dualizing complex is unique up to an isomorphism in $\D(\Mod A^e)$ if it exists.

\begin{lem}\cite[Proposition 8.2.(1)]{V}\label{rigid-dual-comp-iso}
	Let $A$ be a Noetherian algebra, and $\Omega_1, \Omega_2$ be two rigid dualizing complexes for $A$. Then $\Omega_1 \cong \Omega_2$ in $\D(\mathrm{Mod}A^e)$.
\end{lem}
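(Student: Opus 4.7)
The plan is to show that any two dualizing complexes over $A$ are related by twisting with an invertible object of $\D(\Mod A^e)$, and then to use the rigidity of both $\Omega_1$ and $\Omega_2$ to force that twist to be trivial.

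First I would invoke the uniqueness-up-to-twist theorem for dualizing complexes (due to Yekutieli, see \cite{Ye1}): given two dualizing complexes $\Omega_1, \Omega_2$ over a Noetherian algebra $A$, there exists a two-sided tilting complex $L \in \D(\Mod A^e)$, that is, an invertible object under the derived tensor product $\Lt_A$, together with an isomorphism $\Omega_2 \cong L \Lt_A \Omega_1$ in $\D(\Mod A^e)$. So the whole question reduces to proving that $L \cong A$.

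Next I would substitute this presentation into the rigidity isomorphism for $\Omega_2$. Starting from $\Omega_2 \cong \RHom_{A^e}(A, \Omega_2 \Lt \Omega_2)$ and replacing each $\Omega_2$ by $L \Lt_A \Omega_1$, one rearranges the tensor product (using that $L$ is a bimodule, and moving the two copies of $L$ past $\Omega_1$ only involves standard derived-category manipulations) to obtain an expression of the form $\RHom_{A^e}(A, L \Lt_A (\Omega_1 \Lt \Omega_1) \Lt_A L)$. Using the standard adjunction/projection formula that holds for invertible complexes, this can be rewritten as $L \Lt_A \RHom_{A^e}(A, \Omega_1 \Lt \Omega_1) \Lt_A L$, and then the rigidity of $\Omega_1$ collapses the middle factor to $\Omega_1$. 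Putting everything together gives $L \Lt_A \Omega_1 \cong L \Lt_A \Omega_1 \Lt_A L$ in $\D(\Mod A^e)$.

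Finally, since $\Omega_1$ and $L$ are both invertible with respect to $\Lt_A$ (the former by the reflexivity statement in the definition of a dualizing complex, the latter by being a tilting complex), I can cancel $L \Lt_A \Omega_1$ from both sides to conclude $L \cong A$ in $\D(\Mod A^e)$, which yields $\Omega_2 \cong \Omega_1$. The main obstacle is the middle step: correctly tracking the left and right $A$-actions when sliding the invertible factors $L$ past the tensor product $\Omega_1 \Lt \Omega_1$ and past the $\RHom_{A^e}(A,-)$ functor, because these rearrangements rely on the projection formula for invertible bimodule complexes, which in turn requires knowing that $L$ is perfect on both sides. Once that formula is in hand, the rest of the argument is purely formal.
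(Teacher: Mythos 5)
The paper does not reprove this lemma; it is quoted from Van den Bergh, and your overall strategy --- write $\Omega_2$ as a twist of $\Omega_1$ by a two-sided tilting complex $L$ and use rigidity of both complexes to trivialize $L$ --- is exactly the strategy of the cited proof (note that the ungraded form of the uniqueness-up-to-tilting theorem you need is in \cite{Ye2} rather than \cite{Ye1}, which treats the graded case). However, two of your steps fail as stated. First, rewriting $\Omega_2\otimes\Omega_2$ as $\RHom_{A^e}\bigl(A,\, L\otimes^{\mathbf{L}}_A(\Omega_1\otimes\Omega_1)\otimes^{\mathbf{L}}_A L\bigr)$ with the \emph{same} $L$ on both sides is not a standard manipulation: in $(L\otimes^{\mathbf{L}}_A\Omega_1)\otimes(L\otimes^{\mathbf{L}}_A\Omega_1)$ the second copy of $L$ acts on the second $\Omega_1$ through its left $A$-structure, and moving it to the outside replaces it by a generally different tilting complex. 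What one actually uses is that uniqueness gives both a left presentation $\Omega_2\cong L\otimes^{\mathbf{L}}_A\Omega_1$ and a right presentation $\Omega_2\cong\Omega_1\otimes^{\mathbf{L}}_A L'$ with $L'=\RHom_A(\Omega_1,\Omega_2)$; substituting the first into the first tensor factor and the second into the second, and pulling the perfect complexes $L,L'$ out of $\RHom_{A^e}(A,-)$, yields $\Omega_2\cong L\otimes^{\mathbf{L}}_A\Omega_1\otimes^{\mathbf{L}}_A L'\cong\Omega_2\otimes^{\mathbf{L}}_A L'$.

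Second, and more seriously, your final cancellation rests on the claim that $\Omega_1$ is invertible with respect to $\otimes^{\mathbf{L}}_A$ ``by the reflexivity statement in the definition of a dualizing complex.'' That is false: the biduality condition $A\cong\RHom_A(\Omega,\Omega)$ does not make $\Omega$ a tilting complex (over a commutative non-Gorenstein ring a dualizing complex is never $\otimes^{\mathbf{L}}$-invertible), so you cannot cancel $L\otimes^{\mathbf{L}}_A\Omega_1$ from both sides. The correct way to finish from $\Omega_2\cong\Omega_2\otimes^{\mathbf{L}}_A L'$ is to apply $\RHom_A(\Omega_2,-)$, use the projection formula for the perfect complex $L'$, and invoke $\RHom_A(\Omega_2,\Omega_2)\cong A$; this gives $A\cong A\otimes^{\mathbf{L}}_A L'\cong L'$, whence $\Omega_2\cong\Omega_1\otimes^{\mathbf{L}}_A L'\cong\Omega_1$. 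With these two repairs your argument becomes precisely the proof of \cite[Proposition 8.2(1)]{V}.
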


\begin{defn} \cite[Definition 4.4]{BrZ}\label{defn-rigid-Gorenstein}
	A Noetherian algebra $A$ is called {\it rigid Gorenstein} of dimension $d$ if the injective dimension of $_AA$ and $A_A$ are both equal to $d$, 
	and there is an automorphism $\mu$ satisfying the condition \eqref{nak-aut}.
    The automorphism $\mu$ is also called the {\it Nakayama automorphism} of $A$.
\end{defn}

It is clear that a rigid Gorenstein algebra $A$ is skew Calabi-Yau, if $A$ is homological smooth.
By definition, the following lemma can be verified directly.

\begin{lem}\label{rigid-goren-vs-dual-comp}
	Let $A$ be a Noetherian algebra. Then $A$ is rigid Gorenstein of dimension  $d$ if and only if there is an automorphism $\mu$ of $A$ such that ${}^{\mu}A[d]$ is a rigid dualizing complex for $A$.
\end{lem}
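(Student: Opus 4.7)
The approach is to verify the equivalence directly by unwinding both definitions under the substitution $\Omega := {}^\mu A[d]$. Each of the three dualizing-complex axioms, together with the rigidity equation, translates into one of the clauses defining rigid Gorenstein, so the proof reduces to checking these translations in both directions.

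For the forward implication, I would assume $A$ is rigid Gorenstein of dimension $d$ with Nakayama automorphism $\mu$ and set $\Omega := {}^\mu A[d]$. Then: (i) finite injective dimension of $\Omega$ over $A$ and over $A^{op}$ follows because ${}^\mu A$ is isomorphic to $A$ as a one-sided module (the twist affects only the other action) and $A$ has injective dimension $d$ on both sides by hypothesis; (ii) the unique cohomology module is $A$ placed in degree $-d$, which is finitely generated; (iii) the biduality maps $A \to \RHom_A(\Omega,\Omega)$ and $A \to \RHom_{A^{op}}(\Omega,\Omega)$ reduce to $\End_A(A) \cong A$ together with vanishing of the higher self-Ext groups, which hold because $A$ is projective over itself.

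The substantive step is the rigidity axiom. Here I would identify the bimodule ${}_A\Omega \otimes_\kk \Omega_A = {}_A({}^\mu A) \otimes_\kk ({}^\mu A)_A$ with a one-sided $\mu$-twist of $A^e$, using that the outer $A^e$-structure on this tensor product takes the left action of the first tensorand and the right action of the second. Applying $\RHom_{A^e}(A,-)$, which commutes with one-sided twists on the coefficient bimodule up to the corresponding twist on cohomology, the computation reduces to the Ext hypothesis $\Ext^i_{A^e}(A,A^e) \cong \delta_{i,d}\, A^\mu$. Tracking the total shift ($+2d$ from doubling the shift on $\Omega$, $-d$ from the lone nonvanishing Ext degree) yields exactly ${}^\mu A[d] = \Omega$, as required.

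For the converse, the same chain of identifications runs in reverse: the finite injective dimension of $\Omega$ on both sides together with the biduality axiom pins down $\mathrm{injdim}({}_AA) = \mathrm{injdim}(A_A) = d$ (the value $d$ being fixed by the cohomological position of $\Omega$), while the rigidity isomorphism, fed through the bimodule identification, forces $\Ext^i_{A^e}(A,A^e) \cong \delta_{i,d}\, A^\mu$. The main technical obstacle is the bimodule bookkeeping in this identification: one must verify that ${}_A({}^\mu A) \otimes_\kk ({}^\mu A)_A$ is a twist of $A^e$ by $\mu$ on the correct side, so that the output Ext carries the twist $A^\mu$ on the nose rather than some other automorphism-twist. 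Once this is handled carefully, the rest is routine verification of the dualizing-complex axioms.
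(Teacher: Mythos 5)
The paper offers no proof of this lemma (it is asserted to be verifiable ``by definition''), so your direct unwinding of the two definitions is the intended route, and there is nothing in the paper to compare against step by step. Your forward direction and the rigidity computation are fine in outline: identifying ${}_A({}^{\mu}A)\otimes({}^{\mu}A)_A$ with the free outer $A^e$-module $A\otimes A$ (via $x\otimes y\mapsto \mu(x)\otimes y$, say), applying $\RHom_{A^e}(A,-)$, tracking the inner actions, and counting the shift $2d-d=d$ is the whole content of that step. You correctly flag the bimodule bookkeeping as the only delicate point; a complete proof would have to carry it out, but there is no obstruction there.

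The genuine gap is in the converse, at the sentence claiming that the dualizing-complex axioms together with biduality ``pin down'' $\mathrm{injdim}({}_AA)=\mathrm{injdim}(A_A)=d$. They do not: any shift of a dualizing complex is again a dualizing complex satisfying biduality, so those axioms only yield \emph{finite} injective dimension on both sides. What rigidity normalizes is the shift $d$, forcing it to equal the unique degree in which $\Ext^{\ast}_{A^e}(A,A^e)$ is nonvanishing --- and for a general Noetherian algebra that degree need not equal the one-sided injective dimension. The first Weyl algebra $A_1(\kk)$ in characteristic zero is Calabi--Yau of dimension $2$, so ${}^{\id}A_1[2]$ is its rigid dualizing complex, yet $\mathrm{injdim}({}_{A_1}A_1)=\mathrm{gldim}(A_1)=1$; thus $A_1$ satisfies the right-hand side of the lemma with $d=2$ while failing the injective-dimension clause of Definition \ref{defn-rigid-Gorenstein}. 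So this step cannot be closed by the argument you give: either an additional hypothesis is implicitly in force (in the paper the lemma is only ever applied to algebras module-finite over affine commutative Gorenstein centres, where the two dimensions do coincide), or the equality of $d$ with the injective dimension requires a separate argument that your sketch does not supply.
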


We call an algebra $R$ {\it affine} if it is finitely generated as a $\kk$-algebra. An $R$-algebra $A$ is called a module-finite $R$-algebra if it is finitely generated as an $R$-module.

\begin{lem}\label{rigid-dual-comp-center-finite}
	Let $R$ be an affine commutative algebra, and $A$ be a module-finite $R$-algebra. If $R$ is rigid Gorenstein of dimension  $d$, then $\RHom_R(A, R[d])$ is a rigid dualizing complex for $A$.
\end{lem}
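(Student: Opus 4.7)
My plan is to verify separately the two parts of the rigid dualizing complex axioms for $\Omega := \RHom_R(A, R[d])$. As a preliminary, because $R$ is commutative its Nakayama automorphism in Definition \ref{defn-rigid-Gorenstein} is forced to be the identity (per the remark after Definition \ref{defi-of-CY} applied to the same bimodule condition), so Lemma \ref{rigid-goren-vs-dual-comp} already exhibits $R[d]$ as a rigid dualizing complex over $R$; this is the object we push through the finite ring map $R \to A$.

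For the dualizing-complex axioms over $A$, I would use standard change-of-rings arguments. Boundedness and finite injective dimension on both sides follow because $A$ is module-finite over the Noetherian ring $R$ and $R[d]$ has finite injective dimension, combined with the adjunction $\RHom_A(M, \RHom_R(A, R[d])) \cong \RHom_R(M, R[d])$. Finite generation of the cohomology modules over $A$ and $A^{op}$ reduces to finite generation over $R$, which is immediate since $A$ is finitely generated over $R$ and each $\Ext^i_R(A, R)$ is therefore finitely generated as an $R$-module. The reflexivity map $A \to \RHom_A(\Omega, \Omega)$ is computed via the same adjunction as $A \to \RHom_R(\Omega, R[d])$; but $\Omega = \RHom_R(A, R[d])$, so this is exactly the reflexivity of $A$ with respect to the dualizing complex $R[d]$ over $R$, and likewise on the opposite side.

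For the rigidity identity $\RHom_{A^e}(A, {}_A\Omega \otimes \Omega_A) \cong \Omega$, the strategy is to run a chain of adjunctions reducing to the rigidity of $R[d]$ over $R$. Starting from
\[
{}_A\Omega \otimes_{\kk} \Omega_A \;\cong\; \RHom_{R^e}\bigl(A \otimes_{\kk} A,\; R[d] \otimes_{\kk} R[d]\bigr),
\]
change-of-rings along the inclusion $R^e \hookrightarrow A^e$ converts $\RHom_{A^e}(A, -)$ of this object into $\RHom_{R^e}(A \otimes_{A^e}(A \otimes_{\kk} A),\; R[d] \otimes_{\kk} R[d])$; the key identification is that, because $R$ is central in $A$, the (derived) tensor $A \otimes_{A^e}(A \otimes_{\kk} A)$ reduces to $A$ as an $R^e$-module. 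Refactoring the result as $\RHom_R(A, \RHom_{R^e}(R, R[d] \otimes_{\kk} R[d]))$ and invoking rigidity of $R[d]$ on the inner $\RHom$ returns $\RHom_R(A, R[d]) = \Omega$.

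The main obstacle is the careful bookkeeping of left/right $A$-module structures through the outer tensor product and the change-of-rings step; the calculation is essentially formal, but one must track which copy of $A$ acts on which side at each stage and handle derived tensors with care, since $A$ need not admit a finite projective resolution over $A^e$ under the given hypotheses, so all identifications must be made inside $\D(\Mod A^e)$ rather than at the level of underlying bimodules.
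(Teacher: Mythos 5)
Your argument is correct and is essentially a reconstruction of the proof of \cite[Proposition 5.7]{Ye2}, which is all the paper offers here: its entire proof of this lemma is the single sentence that the conclusion follows from the same proof as that proposition. The chain you describe --- adjunction/biduality over $R[d]$ for the three dualizing-complex axioms, then the K\"unneth identification $\Omega\otimes_{\kk}\Omega \cong \RHom_{R^e}(A\otimes_{\kk}A,\,R[d]\otimes_{\kk}R[d])$ (valid since $A$ admits a resolution by finitely generated projective $R$-modules over the Noetherian ring $R$), followed by tensor-Hom adjunctions that collapse $\RHom_{A^e}(A,-)$ onto the rigidity isomorphism for $R[d]$ over $R$ --- is exactly that argument, with the bimodule bookkeeping you flag being the only real content to check.
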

\begin{proof}
	The conclusion follows from the same proof of \cite[Proposition 5.7]{Ye2}.
\end{proof}

The following result was announced in \cite[section 2.5]{BGS}. For the convenience of reader, we present a full proof.

\begin{lem}\label{rigid-dualizing}
	Let $R$ is an affine commutative rigid Gorenstein algebra, and $A$ be a module-finite $R$-algebra. If $A$ is a Frobenius $R$-algebra, then $A$ is rigid Gorenstein such that
	$$\Hom_R(A, R) \cong {^{\mu}\!A},$$
	where $\mu$ is the Nakayama automorphism of $A$ in the Definition \ref{defn-rigid-Gorenstein}.
\end{lem}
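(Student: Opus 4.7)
The plan is to produce a rigid dualizing complex for $A$ of the shape ${}^{\mu}A[d]$ and then invoke Lemma \ref{rigid-goren-vs-dual-comp}. Since $A$ is a Frobenius $R$-algebra, $A$ is a finitely generated projective $R$-module, and the Frobenius structure supplies an automorphism $\mu \in \Aut(A)$ together with an $A$-$A$-bimodule isomorphism
$$\Theta: {}^{\mu}A \To \Hom_R(A,R), \qquad a \mapsto \theta \cdot a,$$
where $\theta$ is the Frobenius homomorphism and $\mu$ is the Frobenius Nakayama automorphism as recorded in Section \ref{section 1}.

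By Lemma \ref{rigid-dual-comp-center-finite}, the complex $\Omega := \RHom_R(A, R[d])$ is a rigid dualizing complex for $A$ in $\D(\Mod A^e)$. Because $A$ is $R$-projective, the derived Hom collapses to the ordinary Hom, yielding $\Omega \cong \Hom_R(A,R)[d]$, and the Frobenius isomorphism $\Theta$ then promotes this to $\Omega \cong {}^{\mu}A[d]$. Applying Lemma \ref{rigid-goren-vs-dual-comp} identifies $A$ as rigid Gorenstein of dimension $d$ with $\mu$ serving as a valid Nakayama automorphism in the sense of Definition \ref{defn-rigid-Gorenstein}. The uniqueness supplied by Lemma \ref{rigid-dual-comp-iso} ensures that any other Nakayama automorphism in the rigid Gorenstein sense differs from $\mu$ by an inner automorphism, so the desired bimodule isomorphism $\Hom_R(A,R) \cong {}^{\mu}A$ is precisely the one already produced by $\Theta$.

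The only delicate point is tracking the $A$-$A$-bimodule structure through the collapse of the derived Hom: the left $A$-action on $\Hom_R(A,R)$ comes from right multiplication on the source $A$, the right $A$-action from left multiplication, and these two actions are compatible with the $R$-linear structure used to form the Hom precisely because $R$ is central in $A$. I do not anticipate any serious technical obstacle, since the substantive content is already packaged in Lemmas \ref{rigid-dual-comp-center-finite}, \ref{rigid-goren-vs-dual-comp}, and \ref{rigid-dual-comp-iso}, with the Frobenius ingredient coming from the standard discussion of Nakayama automorphisms in Section \ref{section 1}.
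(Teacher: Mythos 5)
Your proposal is correct and follows essentially the same route as the paper's proof: invoke Lemma \ref{rigid-dual-comp-center-finite} to get the rigid dualizing complex $\RHom_R(A,R[d])$, use $R$-projectivity and the Frobenius bimodule isomorphism to identify it with ${}^{\mu}A[d]$, and conclude via Lemma \ref{rigid-goren-vs-dual-comp}. The extra remarks on bimodule structures and uniqueness up to inner automorphism are fine but not needed beyond what the paper records.
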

\begin{proof}
	By Lemma \ref{rigid-dual-comp-center-finite}, $\RHom_{R}(A, R[d])$ is a rigid dualizing complex over $A$, where  $d$ is the injective dimension of $R$. By assumption, $A$ is a Frobenius $R$-algebra, that is, $A$ is a projective $R$-module and there exists an automorphism $\mu$ of $A$ such that $\Hom_R(A, R) \cong {^{\mu}A}$ as $A$-$A$-bimodules.
	Hence $\RHom_{R}(A, R[d]) \cong {^{\mu}A}[d]$ in $\D(\Mod A^e)$. Then the conclusion follows from Lemma \ref{rigid-goren-vs-dual-comp}.
\end{proof}

\begin{prop}\label{Goren-Frob}
	Let $R$ is an affine commutative Calabi-Yau algebra of dimension  $d$, and $A$ be a module-finite $R$-algebra. Then $A$ is rigid Gorenstein of dimension  $d$ if and only if $A$ is a Frobenius $R$-algebra. Further, if $A$ is Calabi-Yau, then $A$ is a symmetric Frobenius $R$-algebra.
\end{prop}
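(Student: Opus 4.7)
The plan is to extract everything from the uniqueness of rigid dualizing complexes, following the same template as Lemma \ref{rigid-dualizing}. Since $R$ is commutative Calabi-Yau of dimension $d$, it is in particular rigid Gorenstein of dimension $d$ (with identity Nakayama automorphism), so the forward implication ``$A$ Frobenius $\Rightarrow$ $A$ rigid Gorenstein of dimension $d$'' is nothing but Lemma \ref{rigid-dualizing} applied to this $R$, and it delivers the Nakayama automorphism $\mu$ of $A$ as the Frobenius twist.

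For the converse, I would suppose $A$ is rigid Gorenstein of dimension $d$ with Nakayama automorphism $\mu$. By Lemma \ref{rigid-goren-vs-dual-comp}, ${}^{\mu}A[d]$ is a rigid dualizing complex for $A$, while Lemma \ref{rigid-dual-comp-center-finite} simultaneously provides the rigid dualizing complex $\RHom_R(A, R[d])$ over $A$. The uniqueness statement Lemma \ref{rigid-dual-comp-iso} then yields an $A$-$A$-bimodule isomorphism
\[
\RHom_R(A, R[d]) \cong {}^{\mu}A[d]
\]
in $\D(\Mod A^e)$. Because the right-hand side is concentrated in a single cohomological degree, I can read off $\Ext^i_R(A, R) = 0$ for every $i > 0$ together with a bimodule isomorphism $\Hom_R(A, R) \cong {}^{\mu}A$.

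What remains, and this is where I expect the main obstacle, is upgrading this Ext-vanishing to the honest projectivity of $A$ as a (right) $R$-module. Here the hypothesis that $R$ is Calabi-Yau (and not merely rigid Gorenstein) is essential: over a field of characteristic zero, the affine commutative Calabi-Yau algebra $R$ is smooth, hence regular. Localizing at any prime $\mathfrak{p}$ of $R$, the finitely generated module $A_{\mathfrak{p}}$ over the regular local ring $R_{\mathfrak{p}}$ satisfies $\Ext^i_{R_{\mathfrak{p}}}(A_{\mathfrak{p}}, R_{\mathfrak{p}}) = 0$ for all $i > 0$, and the Auslander--Buchsbaum formula forces $A_{\mathfrak{p}}$ to be free. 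Thus $A$ is a finitely generated projective right $R$-module; combined with the bimodule isomorphism $\Hom_R(A, R) \cong {}^{\mu}A$, this exhibits $A$ as a Frobenius $R$-algebra with Nakayama automorphism $\mu$.

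Finally, when $A$ is Calabi-Yau the Nakayama automorphism in Definition \ref{defi-of-CY} may be chosen to be $\id_A$, so ${}^{\mu}A \cong A$ as $A$-$A$-bimodules and the Frobenius isomorphism becomes $\Hom_R(A, R) \cong A$, which is exactly the symmetric Frobenius condition. Everything outside the projectivity step is a mechanical unwinding of the rigid dualizing complex dictionary; the only genuinely external input I rely on is the regularity (equivalently, smoothness) of a commutative affine Calabi-Yau $\kk$-algebra in characteristic zero, and this is precisely what fails when one weakens the assumption on $R$ from Calabi-Yau to rigid Gorenstein.
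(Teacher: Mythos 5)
Your proof is correct, and its skeleton coincides with the paper's: both directions reduce to Lemma \ref{rigid-dualizing} and to the identification $\RHom_R(A,R[d])\cong {}^{\mu}A[d]$ coming from uniqueness of rigid dualizing complexes, after which everything hinges on upgrading the vanishing of $\Ext^{>0}_R(A,R)$ to projectivity of $A_R$. Where you diverge is precisely at that step. The paper argues globally: with $n=\pd A_R<\infty$ (finite global dimension of $R$), it picks $N$ with $\Ext^n_R(A,N)\neq 0$, covers $N$ by a free module $F$, and uses $F\otimes_R\Ext^n_R(A,R)\cong\Ext^n_R(A,F)\neq 0$ to force $n=0$. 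You instead localize and invoke commutative local algebra over the regular local rings $R_{\mathfrak p}$. Your route works, but the appeal to Auslander--Buchsbaum is slightly short-circuited: that formula gives $\pd A_{\mathfrak p}=\operatorname{depth}R_{\mathfrak p}-\operatorname{depth}A_{\mathfrak p}$, and to conclude freeness you must first extract $\operatorname{depth}A_{\mathfrak p}=\operatorname{depth}R_{\mathfrak p}$ from the hypothesis $\Ext^{i}_{R_{\mathfrak p}}(A_{\mathfrak p},R_{\mathfrak p})=0$ for $i>0$ (i.e.\ that $A_{\mathfrak p}$ is maximal Cohen--Macaulay); the cleaner local statement is that for a nonzero finitely generated module of finite projective dimension $n$ over a Noetherian local ring one has $\Ext^n(M,R)\neq 0$ (read off a minimal free resolution), which kills $n$ at once and is really the local shadow of the paper's global argument. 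Also, the characteristic-zero hypothesis you lean on for smoothness is not needed: homological smoothness of the affine commutative $R$ already gives finite global dimension, hence regularity, which is all either argument uses. The symmetric Frobenius conclusion is handled identically in both proofs.
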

\begin{proof}
	By Lemma \ref{rigid-dualizing}, we only need to prove that $A$ is a Frobenius $R$-algebra if $A$ is rigid Gorenstein of dimension  $d$. According to Lemma \ref{rigid-goren-vs-dual-comp} and \ref{rigid-dual-comp-center-finite}, $\RHom_{R}(A, R[d]) \cong {^{\mu}A}[d]$ in $\D(\Mod A^e)$.
	Hence $\Ext^i_R(A, R) = \begin{cases}
		^{\mu}\!A, & i = 0 \\
		0, & i \neq 0.
	\end{cases}$
	Since $R$ has finite global dimension by definition, then there exists an $R$-module $N$, such that $\Ext^n_R(A, N) \neq 0$, where $n$ is the projective dimension of $A_R$. Let $F$ be a free $R$-module with a surjective $R$-module morphism $F \twoheadrightarrow N$. Then there is a short exact sequence $\Ext^n_R(A, F) \to \Ext^n_R(A, N) \to 0$. Hence $\Ext^n_R(A, F) \neq 0$. Since $R$ is a Noetherian ring and $A$ is a finitely generated $R$-module, it follows that
    $$F \otimes_R \Ext^n_R(A, R) \cong \Ext^n_R(A, F) \neq 0.$$
    So $n = 0$, that is, $A$ is a projective $R$-module.
    Thus $A$ is a Frobenius $R$-algebra.
    If $A$ is Calabi-Yau, then the Nakayama automorphism $\mu$ is inner. It follows that $A$ is a symmetric Frobenius $R$-algebra.
%
\end{proof}

\begin{cor}\label{sCY-over-com-CY-is-Frob-alg}
	Let $R$ be an affine Calabi-Yau algebra of dimension $d$ and $A$ be a module-finite $R$-algbera. If $A$ is a skew Calabi-Yau algebra of dimension $d$, then $A$ is a Frobenius $R$-algebra.
\end{cor}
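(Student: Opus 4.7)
The plan is to deduce this corollary directly from Proposition~\ref{Goren-Frob}, which, under exactly the stated hypotheses on $R$ and $A$, establishes the equivalence between being rigid Gorenstein of dimension~$d$ and being a Frobenius $R$-algebra. Therefore the entire task reduces to showing that a skew Calabi-Yau algebra $A$ of dimension~$d$ which is module-finite over the commutative Calabi-Yau algebra $R$ of dimension~$d$ is rigid Gorenstein of dimension~$d$.

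Comparing Definition~\ref{defi-of-CY} and Definition~\ref{defn-rigid-Gorenstein}, the bimodule Ext condition~\eqref{nak-aut} appears identically in both, so the very same Nakayama automorphism $\mu$ witnesses it, and no further work is needed on that side. What remains is to verify that $A$ has finite injective dimension equal to~$d$ on both sides. For this I would invoke Lemma~\ref{rigid-goren-vs-dual-comp} and prove the equivalent statement that ${}^{\mu}A[d]$ is a rigid dualizing complex for $A$.

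Two ingredients combine to yield this. On the one hand, by Lemma~\ref{rigid-dual-comp-center-finite}, since $R$ is in particular rigid Gorenstein of dimension~$d$, the complex $\RHom_R(A, R[d])$ is already a rigid dualizing complex for $A$. On the other hand, the homological smoothness of $A$ (built into the skew Calabi-Yau hypothesis) yields a finite resolution of $A$ by finitely generated projective $A^e$-modules; combined with the bimodule Ext condition~\eqref{nak-aut}, this gives the derived-category isomorphism $\RHom_{A^e}(A, A^e) \cong A^{\mu}[-d]$ in $\D(\Mod A^e)$. A standard Van den Bergh style manipulation of this identity via tensor-Hom adjunction shows that ${}^{\mu}A[d]$ itself satisfies the three defining conditions of a dualizing complex and also the rigidity identity $\RHom_{A^e}(A,{}_A({}^{\mu}A) \otimes ({}^{\mu}A)_A) \cong {}^{\mu}A[d]$. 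By the uniqueness of rigid dualizing complexes (Lemma~\ref{rigid-dual-comp-iso}), the two rigid dualizing complexes must then be isomorphic in $\D(\Mod A^e)$.

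The main obstacle is precisely this Van den Bergh-type verification that ${}^{\mu}A[d]$ is a rigid dualizing complex from just the skew Calabi-Yau data; the remaining steps are purely formal bookkeeping in the derived category. Once it is in hand, Lemma~\ref{rigid-goren-vs-dual-comp} converts rigid dualizing into rigid Gorenstein of dimension~$d$, and Proposition~\ref{Goren-Frob} then immediately concludes that $A$ is a Frobenius $R$-algebra.
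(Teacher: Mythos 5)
Your proposal is correct and follows essentially the same route as the paper: the corollary is deduced from Proposition~\ref{Goren-Frob} by observing that a module-finite skew Calabi-Yau algebra of dimension~$d$ over such an $R$ is rigid Gorenstein of dimension~$d$ (equivalently, ${}^{\mu}A[d]$ is its rigid dualizing complex), a step the paper leaves implicit and you spell out via Lemma~\ref{rigid-goren-vs-dual-comp} and the standard Van den Bergh argument. Note only that once ${}^{\mu}A[d]$ is verified to be a rigid dualizing complex you are already done by Lemma~\ref{rigid-goren-vs-dual-comp}, so the comparison with $\RHom_R(A,R[d])$ via Lemmas~\ref{rigid-dual-comp-center-finite} and~\ref{rigid-dual-comp-iso} is redundant for the corollary itself (it is what the proof of Proposition~\ref{Goren-Frob} uses internally).
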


As a consequence of Proposition \ref{Goren-Frob}, we infer the following corollary by using Artin-Tate lemma.

\begin{cor}\label{rigid-Goren-Frob-alg-over-poly-alg}
	Let $A$ be an affine rigid Gorenstein algebra which is module-finite over its center. Then $A$ is a Frobenius algebra over a polynomial algebra.
\end{cor}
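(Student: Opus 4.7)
The plan is to combine the Artin--Tate lemma with Noether normalization to exhibit a polynomial subalgebra of the center over which $A$ is module-finite, and then invoke Proposition \ref{Goren-Frob}.

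First I would set $Z := Z(A)$. Because $A$ is affine and $A$ is a finitely generated module over $Z$, the Artin--Tate lemma implies that $Z$ is an affine commutative $\kk$-algebra, and in particular Noetherian. Next, applying Noether normalization to $Z$, I obtain algebraically independent elements $t_{1},\dots,t_{n}\in Z$ such that $Z$ is a finitely generated module over $R:=\kk[t_{1},\dots,t_{n}]$. Here $n$ is the Krull dimension of $Z$. Transitivity of module-finiteness then makes $A$ a module-finite $R$-algebra, with $R$ central in $A$ by construction.

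The polynomial ring $R=\kk[t_{1},\dots,t_{n}]$ is a (commutative, hence symmetric) Calabi--Yau algebra of dimension $n$, so it fits the hypothesis of Proposition \ref{Goren-Frob}. To apply that proposition, one has to match the dimensions: if $A$ is rigid Gorenstein of dimension $d$, then its rigid dualizing complex is ${}^{\mu}\!A[d]$, whose top nonzero cohomology is supported on all of $A$; since $A$ is module-finite over $R\subseteq Z$, this forces the Krull dimension of $R$ to equal $d$, i.e.\ $n=d$. (One can arrive at the same equality directly from Lemma \ref{rigid-dual-comp-center-finite}: the complex $\RHom_{R}(A,R[n])$ is a rigid dualizing complex over $A$, and by the uniqueness stated in Lemma \ref{rigid-dual-comp-iso} it must be isomorphic to ${}^{\mu}\!A[d]$, which gives $n=d$ after comparing cohomological amplitudes.) With $R$ a Calabi--Yau algebra of the same dimension $d$ as $A$, Proposition \ref{Goren-Frob} applies and yields that $A$ is a Frobenius $R$-algebra, completing the proof.

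The main obstacle in this plan is the bookkeeping of the dimensions, namely verifying that the integer $n$ produced by Noether normalization really equals the rigid Gorenstein dimension $d$; everything else is a routine invocation of the results already established in the paper. Once the dimension check is settled, Proposition \ref{Goren-Frob} does the rest of the work.
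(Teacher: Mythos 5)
Your proof is correct and takes essentially the same route the paper intends: Artin--Tate to make the center affine, Noether normalization to extract a central polynomial subalgebra $R$ over which $A$ is module-finite, and then Proposition \ref{Goren-Frob}. The only point the paper leaves implicit is the dimension match $n=d$, and your verification via uniqueness of the rigid dualizing complex (Lemmas \ref{rigid-dual-comp-center-finite} and \ref{rigid-dual-comp-iso}) is sound, the one small thing worth noting being that $\Hom_R(A,R)\neq 0$ because $R$ embeds in $A$, so the lowest nonvanishing cohomology of $\RHom_R(A,R[n])$ really does sit in degree $-n$.
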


\subsection{Artin-Schelter regular algebras}

In the following, we recall the definition of Artin-Schelter regular algebras \cite{AS87}, which are skew Calabi-Yau algebras by \cite[Lemma 1.2]{RRZ}.

An algebra $A$ is called {\it connected graded} if
$$A = A_0 \oplus A_1 \oplus A_2 \oplus \cdots$$
and $1 \in A_0 = \kk$, $A_iA_j \subseteq A_{i+j}$ for all $i, j \in \N$.
Write $A_{>0} = A_1 \oplus A_2 \oplus \cdots$ and $\kk = A/A_{>0}$.
A connected graded algebra $A$ is called {\it locally finite} if $\dim_{\kk} (A_i) < + \infty$ for all $i$.
Then the {\it Hilbert series} of $A$ is defined to be $h_A(t) = \sum_{i \in \N} \dim_{\kk} (A_i) t^i$. The Gelfand-Kirillov dimension of a connected $\N$-graded, locally finite algebra $A$ is defined to be
$$\GKdim (A) = \underset{n \to \infty}{\mathrm{lim \, sup}} \frac{\log (\sum_{i=0}^n \dim_{\kk} (A_i))}{\log (n)},$$
see \cite[Chapter 8]{MR}.

\begin{defn}
	A Noetherian connected graded algebra $A$ is called {\it Artin-Schelter Gorenstein} (or {\it AS Gorenstein}, for short) of dimension $d$, if the following conditions hold:
	\begin{enumerate}
		\item $A$ has finite injective dimension $d$ on the left and on the right,
		\item $\Ext^i_A(\kk, A) \cong \Ext^i_{A^{op}}(\kk, A) \cong \begin{cases}
			0, & i \neq d \\
			\kk(l), & i = d \\
		\end{cases}$, for some integral $l$, where $\kk:= A/A_{>0}$. 
	\end{enumerate}
	If in addition, $A$ has finite global dimension and finite Gelfand-Kirillov dimension, then $A$ is called {\it Artin-Schelter regular} (or {\it AS regular}, for short) of dimension $d$.
\end{defn}

It is well known that for a Noetherian connected graded algebra $A$, $A$ is AS Gorenstein of dimension $0$ if and only if $A$ is a Frobenius algebra.

Due to \cite[Proposition 8.4]{V}, each Noetherian connected graded AS Gorenstein algebra is rigid Gorenstein.

\begin{lem}\label{AS-gr-free-Frob}
	Let $A$ be a Noetherian connected graded AS Gorenstein algebra, $R$ be a central graded subalgebra of $A$. Suppose that $R$ is a graded polynomial algebra, and that $A$ is a finitely generated $R$-module. Then $A$ is a graded free Frobenius $R$-algebra.
\end{lem}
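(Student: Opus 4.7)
The plan is to reduce the Frobenius claim to Proposition \ref{Goren-Frob} and then upgrade ``projective'' to ``graded free'' by exploiting that $R$ is a graded polynomial algebra. Let $d$ denote the Krull dimension of $R$ and $e$ the AS Gorenstein dimension of $A$.

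As a graded polynomial algebra, $R$ is an affine commutative Calabi--Yau algebra of dimension $d$, so the graded analogue of Lemma \ref{rigid-dual-comp-center-finite} yields that $\RHom_R(A, R[d])$ is a rigid dualizing complex for $A$. On the other hand, the AS Gorenstein hypothesis on $A$ together with \cite[Proposition 8.4]{V} shows that ${}^{\mu}A[e]$ is also a rigid dualizing complex for $A$, where $\mu$ is the Nakayama automorphism. Uniqueness of rigid dualizing complexes (Lemma \ref{rigid-dual-comp-iso}), in its graded version, then forces $\RHom_R(A, R[d]) \cong {}^{\mu}A[e]$, and taking cohomology gives $\Ext^{d-e}_R(A, R) \cong {}^{\mu}A$ as graded $A$-$A$-bimodules along with $\Ext^i_R(A, R) = 0$ for all $i \neq d - e$.

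Next I would verify $d = e$. Because $A$ is module-finite over $R$, one has $\GKdim(A) = d$. Because $A$ is a Noetherian connected graded AS Gorenstein algebra of injective dimension $e$, the local cohomology $H^i_{\mathfrak{m}_A}(A)$ is concentrated in degree $e$ (via Van den Bergh duality coming from the rigid dualizing complex ${}^{\mu}A[e]$), so $A$ is graded Cohen--Macaulay with depth equal to $\GKdim(A) = e$. Comparing forces $d = e$. With dimensions matched, Proposition \ref{Goren-Frob} applies: $A$ is a Frobenius $R$-algebra, and in particular a finitely generated projective $R$-module, with $\Hom_R(A, R) \cong {}^{\mu}A$.

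To finish, I would promote projective to graded free: since $R$ is a graded polynomial algebra, $R_{>0}$ is its unique graded maximal ideal, and the graded Nakayama lemma implies that every finitely generated graded projective $R$-module is graded free. Hence $A$ is graded free over $R$, and the bimodule isomorphism above can be chosen graded (up to an internal degree shift arising from $[d]$ and $[e]$), upgrading the Frobenius structure to the graded setting. The main obstacle is the identification $d = e$, which relies on the fact that a Noetherian connected graded AS Gorenstein algebra is Cohen--Macaulay (injective dimension equals GK dimension); this is a standard but nontrivial result that must be cited carefully, whereas everything else is a direct assembly of results already established earlier in the paper.
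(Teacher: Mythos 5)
Your proof is correct and follows essentially the same route as the paper: reduce to Proposition \ref{Goren-Frob} via rigid dualizing complexes, then upgrade projective to graded free using that $R$ is connected graded (graded Nakayama). The one genuine addition is your explicit verification that the injective dimension of $A$ equals the Krull dimension of $R$ (via the Cohen--Macaulay property of Noetherian connected graded AS Gorenstein algebras that are module-finite over a central subring), a nontrivial point that the paper's one-line proof leaves implicit when it invokes Proposition \ref{Goren-Frob}.
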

\begin{proof}
	By Proposition \ref{Goren-Frob}, $A$ is a Frobenius $R$-algebra. Thus $A$ is  graded projective by \cite[Corollary 2.3.2]{NVO}. Since $R$ is also a connected graded algebra, it follows that $A$ is a graded free $R$-module.
	Clearly, $\Hom_{R}(A, R)$ is a graded free $A$-module of rank one. So $A$ is a graded free Frobenius $R$-algebra.
\end{proof}

To conclude this section, we consider the following example.

\begin{ex}
	Let $A$ be the algebra $\kk \langle x, y \rangle / (y^2x-xy^2, yx^2+x^2y)$, which is a Noetherian AS regular domain of dimension 3 by \cite[(8.11)]{AS87}. It is easily seen that $A$ is finite over the central subalgebra $R$ which is generated by $x^4, y^2, z:=(xy)^2+(yx)^2$. One can check that $R$ is a polynomial algebra. Setting $\deg(x) = \deg(y) = 1$, the Hilbert series of $A$ and $R$ are
	$$h_A(t) = \frac{1}{(1-t)^2(1-t^2)}, \qquad h_R(t) = \frac{1}{(1-t^2)(1-t^4)^2}, \qquad \text{ respectively.}$$
	As an $R$-module, $A$ is free of rank 16.
	
	By Lemma \ref{AS-gr-free-Frob}, $A$ is a free Frobenius $R$-algebra. 
	Let $\Theta: {^{\mu}A} \to \Hom_R(A, R)$ be an $A$-$A$-bimodule isomorphism, and $\omega$ be the different of $A$ over $R$. Write $\theta = \Theta(1)$. Thus $\deg(\theta) = \deg(\frac{h_A(t)}{h_R(t)}) = - 6$ and $\deg(\omega) = - \deg(\theta) = 6$.
	By \cite[(E1.5.7)]{LMZ}, the Nakayama automorphism $\mu$ sends $x$ to $-x$, $y$ to $y$.
	Note that $\omega$ is a $\mu$-normal element by Lemma \ref{det(l-multi)}.
	Then, by a straightforward calculation,
	$$\omega =_{\kk^{\times}} x^2((xy)^2 - (yx)^2).$$
	
	Let $K$ be the fractional field of $R$. Then $Q:= A\otimes_R K$ is a finite-dimensional division ring over $K$ by Posner's theorem \cite[Theorem 13.6.5]{MR}.
	Let $f(X) \in K[X]$ be the characteristic polynomial of left multiplication of $\omega$ in $\End_K(Q)$.
	Clearly, $m(X) := X^2 - \omega^2 = X^2 - x^4(z^2 + 4x^4y^4)$ is the monic minimal polynomial of $\omega$ over $K$.
	Since $Q$ is a division ring, then $m(X)$ is irreducible in $K[X]$. Note that $\deg(f) = 16$. Thus $m(X)^8 = f(X)$.
	
	According to Lemma \ref{det(l-multi)}, the discriminant
	$$d(A, R; \tr) =_{\kk^{\times}} \nr(\omega) =_{\kk^{\times}} f(0) =_{\kk^{\times}} \omega^{16} = (x^4 (z^2+4x^4y^4))^8,$$
	which has already been determined in \cite[Example 5.1]{CPWZ1}.
\end{ex}

\section{Reflection Hopf algebra} \label{section 3}


In this section we will recall some concepts for Hopf algebra actions on AS Gorenstein algebras: homological determinant \cite{JZ, KKZ}, Jacobian, reflection arrangement and discriminant \cite{KZ}.

Throughout this section, let $H$ be a finite-dimensional semisimple Hopf algebra, and $A$ be a Noetherian connected graded AS Gorenstein algebra of dimension $d$. Suppose that $H$ acts homogeneously and inner faithfully on $A$ such that $A$ is a left $H$-module algebra.

\subsection{Jacobian, reflection arrangement and discriminant of the Hopf action}

Let $A\#H$ be the corresponding smash product algebra. For any two left $A\#H$-modules $M$ and $N$, we define a left $H$-action on $\Hom_{A^{op}}(M,N)$ induced by the left $H$-actions on $M$ and $N$ by
\begin{equation}\label{H-module-on-Hom}
	(h \rhu f) (m) = \sum_{(h)} h_2 \rhu f(S^{-1}(h_1) \rhu m)
\end{equation}
for all $h \in H$, $f \in \Hom_{A^{op}}(M, N)$, $m \in M$.
Then there is an induced $H$-module structure on $\Ext^d_{A^{op}}(\kk, A)$ which is one-dimension vector space.
Thus there is an unique algebra morphism $\eta: H \to \kk$ such that
\begin{equation}\label{hdet-defn}
	h \rhu e = \eta(h) e
\end{equation}
for all $h \in H$ and $e \in \Ext^d_{A^{op}}(\kk, A)$.

\begin{defn}\label{hdet-Definition}
	Let $\eta$ be defined in \eqref{hdet-defn}. The composite map $\eta \circ S: H \to \kk$ is called the {\it homological determinant} of $H$-action on $A$, and denoted by $\hdet_A$ or $\hdet$.
\end{defn}

Here our definition for homological determinant is not the original definition given in \cite{KKZ}. However, due to \cite[Lemma 5.10 (c)]{KKZ} the original definition is equivalent to Definition \ref{hdet-Definition}.

The fixed subring of the $H$-action on $A$ is defined to be
$$A^H:= \{ a \in A \mid h \rhu a = \varepsilon(h) a, \; \forall \, h \in H \}.$$
For any algebra morphism $\chi: H \to \kk$, we write
$$A^{\chi} = \{ a \in A \mid h \rhu a = \chi(h)a, \; \forall \, h \in H \}.$$
Notice that $\hdet$ can be seen as a group-like element of the dual Hopf algebra $H^*$. Then $\hdet^{-1} := \hdet \circ S$ is the inverse of $\hdet$ in $H^*$.


\begin{defn}\cite[Definition 2.1 and 3.1]{KZ}\label{Jacb-ref-arrang-disc}
	Let $A$ be AS Gorenstein, $\hdet: H \to \kk$ be the homological determinant of the $H$-action on $A$ and $R = A^H$.
	\begin{enumerate}
		\item If $A^{\hdet^{-1}}$ is free of rank one over $R$ on both sides and $A^{\hdet^{-1}} = f_{\hdet^{-1}}R = Rf_{\hdet^{-1}}$, then the {\it Jacobian} of the $H$-action on $A$ is defined to be
		$$\mathfrak{j}_{A, H} :=_{\kk^{\times}} f_{\hdet^{-1}} \in A^{\hdet^{-1}}.$$
		\item If $A^{\hdet}$ is free of rank one over $R$ on both sides and $A^{\hdet} = f_{\hdet}R = Rf_{\hdet}$, then the {\it reflection arrangement} of the $H$-action on $A$ is defined to be
		$$\mathfrak{a}_{A, H} :=_{\kk^{\times}} f_{\hdet} \in A^{\hdet}.$$
		\item Suppose that both the Jacobian $\mathfrak{j}_{A, H}$ and the reflection arrangement $\mathfrak{a}_{A, H}$ exist.
		The {\it left discriminant} and {\it right discriminant}of the $H$-action on $A$, are defined to be
		$$\delta^l_{A, H} :=_{\kk^{\times}}  \mathfrak{a}_{A, H}\mathfrak{j}_{A, H} \in R, \qquad \delta^r_{A, H} :=_{\kk^{\times}} \mathfrak{j}_{A, H}\mathfrak{a}_{A, H} \in R, \qquad \text{respectively}.$$
		If $\delta^l_{A, H} =_{\kk^{\times}} \delta^r_{A, H}$, then $\delta^l_{A, H}$ is called {\it discriminant} of the $H$-action on $A$, and denote by $\delta_{A, H}$.
	\end{enumerate}
\end{defn}

By \cite[Theorem 2.4]{KZ}, the Jacobian $\mathfrak{j}_{A, H}$ exists if and only if $A^H$ is AS Gorenstein.


\begin{defn}\cite[Definition 3.2]{KKZ2}
	Suppose that $A$ is a Noetherian AS regular domain. If the fixed subring $A^H$ is again Artin-Schelter regular, then we say that $H$ acts on $A$ as a {\it reflection Hopf algebra}.
\end{defn}

Suppose that $H$ acts on $A$ as a reflection Hopf algebra. Then $A$ is graded free over $A^H$ on both sides by \cite[Lemma 3.3]{KKZ2}. Hence there exists a integral coefficient polynomial $g(t)$ such that $h_A(t) = g(t) h_{A^H}(t)$, where $h_A(t)$ and $h_{A^H}(t)$ are the Hilbert series of $A$ and $A^H$, respectively.

We now recall some important results about Jacobian, reflection arrangement and discriminant of Hopf actions on AS regular algebras.

\begin{thm}\label{Jacob-thm}\cite[Corollary 2.5 and Theorem 3.8]{KZ}
	Suppose that $A$ is a Noetherian AS regular domain and $H$ acts on $A$ as a reflection Hopf algebra.
	\begin{enumerate}
		\item Both $\mathfrak{j}_{A,H}$ and $\mathfrak{a}_{A, H}$ exist.
		\item $\deg(\mathfrak{j}_{A,H}) = \deg (h_A(t) h_{A^H}(t)^{-1})$.
		\item The discriminant $\delta_{A, H}$ is defined, that is, $\mathfrak{a}_{A, H} \mathfrak{j}_{A, H} =_{\kk^{\times}} \mathfrak{j}_{A, H} \mathfrak{a}_{A, H}$.
		\item $\mathfrak{a}_{A, H}$ divides $\mathfrak{j}_{A, H}$, that is, there exists $a, b \in A$ such that $\mathfrak{j}_{A, H} = a \mathfrak{a}_{A, H} = \mathfrak{a}_{A, H} b$.
	\end{enumerate}
\end{thm}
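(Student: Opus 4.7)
The plan is to exploit the graded Frobenius extension $R := A^H \subseteq A$ that the reflection hypothesis supplies, together with the isotypic decomposition of $A$ under the semisimple $H$-action. Since $H$ acts on $A$ as a reflection Hopf algebra, both $A$ and $R$ are Noetherian connected graded AS regular domains. By \cite[Lemma 3.3]{KKZ2} the algebra $A$ is graded free over $R$ on both sides, so $h_A(t) = g(t)\,h_R(t)$ for a polynomial $g(t) \in \mathbb{Z}_{\ge 0}[t]$. The graded analogue of Lemma \ref{AS-gr-free-Frob}, applied after an Artin--Tate reduction to a central polynomial subalgebra of $R$, then produces a graded Nakayama automorphism $\mu$ of $A$ over $R$ and an isomorphism $\Hom_R(A,R) \cong {}^{\mu}\!A$ of graded $A$-$A$-bimodules, up to an internal degree shift determined by $g(t)$.

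Next, the semisimple $H$-action decomposes $A = \bigoplus_{\lambda} A_\lambda$ into graded isotypic components, each of which is graded free as an $R$-$R$-bimodule. The crux of the argument, and the step I regard as the main obstacle, is the identity $\mu(A^{\chi}) = A^{\chi \cdot \hdet^{-1}}$ for every one-dimensional character $\chi$. This should be derived by identifying ${}^{\mu}\!A \cong \Hom_R(A,R)$ both as a graded $A$-$A$-bimodule and as an $H$-module via the induced action (\ref{H-module-on-Hom}), and then unwinding the character twist carried by the duality; equivalently, it amounts to reading the $H$-module structure off the rigid dualizing complex provided by Lemma \ref{rigid-dualizing}. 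Applying this identity to the trivial component $A^{\varepsilon} = R$, which has rank one, yields that $A^{\hdet^{-1}} = \mu(R)$ is graded free of rank one over $R$ on both sides, with generator $\mathfrak{j}_{A,H} := \mu(1)$; the same reasoning applied with $\mu^{-1}$ in place of $\mu$ (equivalently, on $A^{op}$) gives $A^{\hdet} = \mu^{-1}(R)$ of rank one with generator $\mathfrak{a}_{A,H} := \mu^{-1}(1)$. This establishes (1), and (2) is then immediate since $\deg \mathfrak{j}_{A,H} = \deg \mu(1) = -\deg \mu = \deg g(t) = \deg(h_A(t) h_R(t)^{-1})$.

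For (3), both products $\mathfrak{a}_{A,H} \mathfrak{j}_{A,H}$ and $\mathfrak{j}_{A,H} \mathfrak{a}_{A,H}$ lie in $A^{\varepsilon} = R$ and are nonzero because $A$ is a domain; they realize the $R$-bimodule pairings $A^{\hdet} \otimes_R A^{\hdet^{-1}} \to R$ and $A^{\hdet^{-1}} \otimes_R A^{\hdet} \to R$ between rank-one graded free bimodules, and these two pairings agree up to a scalar in $\kk^{\times}$ via the Frobenius trace on $A$. For (4), iterating the key identity gives $A^{\hdet^{-2}} = \mu^2(R)$ also rank-one free over $R$; the multiplication map $A^{\hdet^{-2}} \otimes_R A^{\hdet} \to A^{\hdet^{-1}}$ is a nonzero (hence injective, by the domain property) $R$-bimodule map between rank-one graded free modules, and a graded degree count together with $\mathfrak{j}_{A,H} \in A^{\hdet^{-1}}$ produces $a \in A^{\hdet^{-2}}$ with $\mathfrak{j}_{A,H} = a \mathfrak{a}_{A,H}$; the symmetric argument yields the corresponding $b$ on the right. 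The core technical difficulty throughout is the character-twist identity for $\mu$; once that is established, what remains reduces to Hilbert-series bookkeeping and the graded domain property of $A$.
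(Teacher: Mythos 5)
First, a remark on scope: the paper does not prove this theorem itself --- it is quoted verbatim from [KZ, Corollary 2.5 and Theorem 3.8] --- so there is no internal proof to compare against, and your argument must stand on its own. It does not, because the step you yourself identify as the crux is false. The identity $\mu(A^{\chi}) = A^{\chi\cdot\hdet^{-1}}$ fails already in the simplest classical example: take $A = \kk[x]$ with the group of order two acting by $x \mapsto -x$, so $R = A^{H} = \kk[x^2]$ and $\hdet(\sigma) = -1$. Here $A$ is commutative, so the Nakayama automorphism of the Frobenius $R$-algebra $A$ is the identity, whence $\mu(A^{\varepsilon}) = R \neq x\kk[x^2] = A^{\hdet^{-1}}$. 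Relatedly, $\mu(1) = 1$ for any algebra automorphism, so your proposed generator $\mathfrak{j}_{A,H} := \mu(1)$ is the identity element of $A$, which lies in $A^{\varepsilon}$ and not in $A^{\hdet^{-1}}$ unless $\hdet = \varepsilon$. The correct link between $\hdet$ and duality is not a $\mu$-twist of the isotypic decomposition: it is that the $A$-module generator $\theta$ of $\Hom_R(A,R)$ spans the one-dimensional $H$-representation with character $\hdet$ (Lemma \ref{lem2} of the paper), and the Jacobian appears as the element $\omega$ with $\tr = \theta\cdot\omega$, which then lies in $A^{\hdet^{-1}}$ (Lemma \ref{degree-lem}). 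Part (1) is genuinely nontrivial and in [KZ] rests on the equivalence ``$\mathfrak{j}_{A,H}$ exists if and only if $A^H$ is AS Gorenstein'' [KZ, Theorem 2.4]; once the key identity is removed, your argument recovers neither the rank-one freeness of $A^{\hdet^{-1}}$ nor that of $A^{\hdet}$.

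Two further problems. The theorem makes no centrality assumption on $A^H$, but your construction of $\mu$ with $\Hom_R(A,R) \cong {}^{\mu}\!A$ as graded $A$-$A$-bimodules invokes Lemma \ref{AS-gr-free-Frob}, which requires $R$ central in $A$; for a non-central Frobenius extension the dual module is a priori only an $R$-$A$-bimodule, and an $A$-$A$-bimodule identification requires a $\beta$-Frobenius (second-kind) structure you would still have to establish. And in part (4) the degree count presupposes $\deg \mathfrak{j}_{A,H} = \deg f_{\hdet^{-2}} + \deg \mathfrak{a}_{A,H}$ for a generator $f_{\hdet^{-2}}$ of $A^{\hdet^{-2}}$, which is asserted but not justified; the proof of (4) in [KZ, Theorem 3.8] proceeds through an analysis of the reflection arrangement and is not reducible to Hilbert-series bookkeeping plus the domain property.
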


It is well known that each commutative AS regular algebra is a graded polynomial algebra. Thus the following lemma is a consequence of Lemma \ref{AS-gr-free-Frob}.

\begin{lem}\label{ref-gr-free-Frob}
	Suppose that $H$ acts on $A$ as a reflection Hopf algebra. If $R:=A^H$ is a central subalgebra of $A$, then $A$ is a graded free Frobenius $R$-algebra.
\end{lem}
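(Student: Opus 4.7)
The plan is to deduce this lemma directly from Lemma \ref{AS-gr-free-Frob} by verifying its three hypotheses: that $A$ is Noetherian connected graded AS Gorenstein, that $R$ is a central graded polynomial subalgebra, and that $A$ is a finitely generated $R$-module. The first is immediate since $A$ is AS regular by standing assumption, and AS regular implies AS Gorenstein. Centrality of $R$ is part of the hypothesis of the lemma.

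For the polynomial structure on $R$, I would argue as follows. Since $R = A^H$ is central in $A$, it is in particular a subring of $Z(A)$, hence commutative. By the reflection Hopf algebra hypothesis, $R$ is also AS regular. Invoking the sentence preceding the lemma, namely that every commutative AS regular algebra is a graded polynomial algebra, we conclude that $R$ is a graded polynomial $\kk$-algebra.

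For module-finiteness, I would appeal to the paragraph following Theorem \ref{Jacob-thm}: when $H$ acts on $A$ as a reflection Hopf algebra, the work of Kirkman-Kuzmanovich-Zhang \cite[Lemma 3.3]{KKZ2} guarantees that $A$ is graded free over $A^H$ on both sides with Hilbert series factorization $h_A(t) = g(t)\, h_{A^H}(t)$ for a polynomial $g(t)$ with nonnegative integer coefficients. Since $g(t)$ is a (genuine) polynomial, the rank of $A$ as an $R$-module is finite, so $A$ is a finitely generated graded $R$-module.

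Having verified all three hypotheses, Lemma \ref{AS-gr-free-Frob} applies and yields that $A$ is a graded free Frobenius $R$-algebra, which is what we wanted. There is no real obstacle here; the content of the lemma is essentially a packaging of the deeper Proposition \ref{Goren-Frob} together with the reflection Hopf algebra structure theorem of Kirkman-Kuzmanovich-Zhang, and the only step requiring a mild observation is that centrality of $A^H$ upgrades AS regularity of $A^H$ to the polynomial algebra conclusion via commutativity.
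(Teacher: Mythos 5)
Your proof is correct and follows exactly the route the paper intends: the paper gives no separate argument beyond the remark that commutative AS regular algebras are graded polynomial algebras and the assertion that the lemma is then a consequence of Lemma \ref{AS-gr-free-Frob}. Your write-up simply makes explicit the verification of the hypotheses (AS Gorenstein from AS regular, polynomiality of $R$ from centrality plus AS regularity of $A^H$, and module-finiteness from \cite[Lemma 3.3]{KKZ2} via the Hilbert series factorization), which is exactly what the paper leaves implicit.
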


In \cite{KZ}, Kirkman and Zhang compared the $H$-discriminant in the noncommutative invariant theory (Definition \ref{Jacb-ref-arrang-disc}) to the noncommutative discriminant over a central subalgebra (Definition \ref{disc-defn}), see the following theorem.

\begin{thm}\cite[Theorem 3.10]{KZ}
	Suppose that $A$ is a Noetherian AS regular domain and $H$ acts on $A$ as a reflection Hopf algebra. Further assume that
	\begin{enumerate}
		\item [(a)] $H = (\kk G)^*$ where $G$ is a finite group, and
		\item [(b)] $R:= A^H$ is central in $A$.
	\end{enumerate}
    Then the ideals $(\delta_{A, H})$ and $(d(A, R; \tr))$ have the same prime radical, that is,
    $$\sqrt{(\delta_{A, H})} = \sqrt{(d(A, R; \tr))}.$$
\end{thm}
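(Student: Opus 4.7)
My plan is to establish the radical equality by first proving the stronger identity $d(A,R;\tr) =_{\kk^\times} \mathfrak{j}_{A,H}^n$, where $n=\rank_R A$, and then extracting the equality of radicals from Theorem \ref{Jacob-thm}(4) together with $\delta_{A,H} = \mathfrak{a}_{A,H}\mathfrak{j}_{A,H}$. The structural backbone is Lemma \ref{ref-gr-free-Frob}, which gives $A$ the structure of a graded free Frobenius $R$-algebra under our hypotheses, together with Lemma \ref{det(l-multi)}, which expresses $d(A,R;\tr)$ as the norm $\nr(\omega)$ of the different element $\omega$ attached to some Frobenius system.

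For the identity, I would first show $\omega =_{\kk^\times} \mathfrak{j}_{A,H}$. Both elements generate a distinguished rank one $R$-submodule of $A$: by Lemma \ref{det(l-multi)} the different $\omega$ is $\mu$-normal for the Nakayama automorphism $\mu$ of $A$ as a Frobenius $R$-algebra, while by Definition \ref{Jacb-ref-arrang-disc} the Jacobian $\mathfrak{j}_{A,H}$ freely generates $A^{\hdet^{-1}}$ on both sides. Combining Lemma \ref{rigid-dualizing} (which identifies $\mu$ via the rigid dualizing complex of $A$ relative to $R$) with the fact that under a reflection Hopf action this $\mu$ agrees with the twist by $\hdet^{-1}$, one concludes that the two generators coincide up to a nonzero scalar. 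Next, the assumption $H=(\kk G)^*$ equips $A$ with a $G$-grading whose identity component is $R$, and $\mathfrak{j}_{A,H}$ is homogeneous of the degree corresponding to $\hdet^{-1}$. Left multiplication by $\mathfrak{j}_{A,H}$ permutes the homogeneous components of $A$, and in a basis respecting the grading it becomes a monomial matrix whose determinant computes to $\mathfrak{j}_{A,H}^n$ up to a unit, yielding $\nr(\mathfrak{j}_{A,H}) =_{\kk^\times} \mathfrak{j}_{A,H}^n$ and hence the desired identity.

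The radical equality then reduces to two short inclusions inside $R$. For $\sqrt{(\delta_{A,H})} \subseteq \sqrt{(d(A,R;\tr))}$, observe that $\delta_{A,H}^n = \mathfrak{a}_{A,H}^n\,\mathfrak{j}_{A,H}^n$; since $\mathfrak{j}_{A,H}^n\in R$ forces $\hdet^{-n}=\varepsilon$, the element $\mathfrak{a}_{A,H}^n$ also lies in $R$, so $\delta_{A,H}^n$ is an $R$-multiple of $d(A,R;\tr)$. For the reverse inclusion, use Theorem \ref{Jacob-thm}(4) to write $\mathfrak{j}_{A,H}=a\mathfrak{a}_{A,H}$ with $a\in A$; then $\mathfrak{j}_{A,H}^n=\delta_{A,H}\cdot(\mathfrak{j}_{A,H}^{n-2}a)$, and a semi-invariance computation shows $\mathfrak{j}_{A,H}^{n-2}a\in A^{\hdet^{-n}}=R$, so $d(A,R;\tr)\in(\delta_{A,H})_R$. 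The main obstacle I foresee is the first identification $\omega =_{\kk^\times} \mathfrak{j}_{A,H}$, which requires reconciling two a priori different ``Nakayama automorphisms" — the homological one coming from the rigid dualizing complex of the finite $R$-algebra $A$, and the one implicit in the $\hdet^{-1}$-twist on the subspace $A^{\hdet^{-1}}$. Once that alignment is secured, the $G$-grading from $H=(\kk G)^*$ turns the norm computation into a permutation matrix determinant, and the radical comparison reduces to a short manipulation of $H$-semi-invariant subspaces.
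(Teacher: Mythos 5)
Your overall architecture --- identify the different $\omega$ of the Frobenius extension $R\subseteq A$ with the Jacobian, compute $\nr(\mathfrak{j}_{A,H})=_{\kk^\times}\mathfrak{j}_{A,H}^n$, and then trade $\mathfrak{j}_{A,H}$ for $\delta_{A,H}$ using Theorem \ref{Jacob-thm}(4) --- is exactly the paper's (the result is proved, without hypothesis (a), as Theorem \ref{main-thm} via Lemmas \ref{det(l-multi)} and \ref{degree-lem}), and your radical comparison at the end is essentially the paper's argument. But the step you yourself flag as the main obstacle, $\omega=_{\kk^\times}\mathfrak{j}_{A,H}$, is genuinely incomplete as you present it. First, $\mu$-normality does not cut out a one-dimensional space: if $\omega$ is $\mu$-normal then so is $r\omega$ for every $r\in R$, so even a perfect identification of $\mu$ with ``the $\hdet^{-1}$-twist'' would at best give $\omega\in R\,\mathfrak{j}_{A,H}$, not $\omega=_{\kk^\times}\mathfrak{j}_{A,H}$; you still need the degree count $\deg\omega=\deg\bigl(h_A(t)h_{A^H}(t)^{-1}\bigr)=\deg\mathfrak{j}_{A,H}$ from Theorem \ref{Jacob-thm}(2), which your proposal never invokes. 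Second, the ``fact'' that the Frobenius--Nakayama automorphism of $A$ over $R$ is realized by the $\hdet^{-1}$-semi-invariance of the different is not something Lemma \ref{rigid-dualizing} hands you; it is precisely the content that has to be proved. The paper does this by a concrete equivariance argument: the Hattori--Stallings trace $\tr:A\to R$ is an $H$-module map (Lemma \ref{tr-H-mor}), the Frobenius homomorphism $\theta$ satisfies $h\rhu\theta=\hdet(h)\theta$ (Lemma \ref{lem2}, via the covariant ring $\bar A$ and Lemma \ref{hdet-bar-hdet}), and therefore $\tr=\theta\cdot\omega$ forces $h\rhu\omega=\hdet^{-1}(h)\omega$, i.e.\ $\omega\in A^{\hdet^{-1}}=R\,\mathfrak{j}_{A,H}$; the degree count then finishes. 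Until you supply an actual argument for this semi-invariance, the key identity is not established.

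Where you genuinely diverge from the paper is the norm computation, and there your idea is workable but only under hypothesis (a): with $A=\bigoplus_{g\in G}A_g$ and $A_e=R$, left multiplication by the homogeneous element $\mathfrak{j}_{A,H}$ is block-monomial, and composing the blocks around each $\langle g_0\rangle$-cycle gives multiplication by the central element $\mathfrak{j}_{A,H}^{r}\in R$ ($r$ the order of $\hdet$), whence $\nr(\mathfrak{j}_{A,H})=_{\kk^\times}\mathfrak{j}_{A,H}^n$. You should justify that the blocks along a cycle are square of a common size (otherwise the block-determinant formula fails; injectivity of left multiplication on the domain $A$ and nonvanishing of the determinant force the ranks to agree). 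The paper instead passes to the quotient division ring $Q=A\otimes_RK$ and shows the minimal polynomial of $\mathfrak{j}_{A,H}$ over $K$ is $X^{r}-\mathfrak{j}_{A,H}^{r}$, which avoids (a) entirely and is what lets Theorem \ref{main-thm} drop that hypothesis.
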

In the next section, we will prove that the above theorem is also true without assumption (a), see Theorem \ref{main-thm}.

\subsection{Reflection group}

If $H$ is a finite-dimensional cosemisimple Hopf algebra over an algebraically closed field which acts inner faithfully on a commutative domain, then $H$ must be a group algebra, see \cite[Theorem 1.3]{EW}. Hence the reflection Hopf algebras which act on commutative AS regular domain, are just the group algebras if $\kk$ is a algebraically closed field of characteristic zero.

We now consider the reflection groups which act on polynomial algebras and fix some notation that will be used in this subsection. Let $V = \kk x_1 \oplus \cdots \oplus \kk x_n$ be a finite-dimensional vector space. For any $\sigma \in \mathrm{GL}(V)$, $\sigma$ is called a {\it reflection} if $\sigma$ is of finite order and fixes a codimension one subspace of $V$. A finite subgroup $G$ of $\mathrm{GL}(V)$, is called a {\it reflection group} if it is generated by reflections.
Now we go over some notation about reflection group. By definition, each reflection in $G$ fixes some hyperplane in $V$. Let $\A(G)$ be the set of these reflection hyperplanes. For any $U \in \A(G)$, denote the stablilizer of $U$ in $G$ by $G_U = \{\sigma \in G \mid \sigma|_U = \id_U \}$. Since $\dim_{\kk} (U) = \dim_{\kk} (V) - 1$, $G_U$ is a cyclic group generated by a reflection $\sigma$. Let $e_U$ be the order of $\sigma$ and $\alpha_U \in V$ be a eigenvector of $\sigma$ with non-unit eigenvalue.


The following results about reflection groups and invariant theory are well known, for details, see \cite{K} and \cite[Chapter 6]{OT} for instance.

\begin{thm}\label{reflec-group-lem}
	Let $A$ be the symmetric algebra $S(V) \cong \kk[x_1, \cdots, x_d]$ and $G \subset \mathrm{GL}(V)$ be a reflection group that acts on $A$ as automorphsims. Then
	\begin{enumerate}
		\item The invariant ring $R:= A^G = \kk[f_1, \cdots, f_d]$ is a graded polynomial subalgebra of $A$.
		\item $A$ is a free $R$-module of rank $|G|$.
		\item The Jacobian of the $G$-action on $A$ is $\mathfrak{j}_{A,\,\kk\!G} =_{\kk^{\times}} \det\begin{pmatrix}
			\frac{\partial f_i}{\partial x_j}
		\end{pmatrix}_{i,j = 1}^d =_{\kk^{\times}} \prod \limits_{U \in \A(G)} \alpha_U^{e_U-1}$.
	    \item The reflection arrangement of the $G$-action on $A$ is $\mathfrak{a}_{A,\,\kk\!G} =_{\kk^{\times}} \prod \limits_{U \in \A(G)} \alpha_U$.
		\item The discriminant of the $G$-action on $A$ is $\delta_{A,\, \kk\!G} =_{\kk^{\times}} \prod \limits_{U \in \A(G)} \alpha_U^{e_U} \in A^G$.
	\end{enumerate}
\end{thm}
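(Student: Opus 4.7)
The plan splits the theorem into two parts: statements (1)--(2) constitute the Chevalley--Shephard--Todd package, while (3)--(5) are explicit product formulas for distinguished semi-invariants.

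\textbf{Plan for (1) and (2).} First I would combine Molien's formula
\[
h_{A^G}(t) = \frac{1}{|G|} \sum_{\sigma \in G} \frac{1}{\det(1 - t\, \sigma|_V)}
\]
with a freeness argument to prove (1) and (2) simultaneously. Since $A$ is a polynomial ring it is Cohen--Macaulay; once one exhibits $R = A^G$ as generated by a homogeneous system of parameters (viewing $A$ over the polynomial Noether normalization of $A^G$), the standard graded CM criterion forces $A$ to be free over $R$. The rank is $|G|$ because $\mathrm{Frac}(R) \subset \mathrm{Frac}(A)$ is a Galois extension with group $G$ (by inner faithfulness of the action and the fact that $G$ is finite acting on a commutative domain). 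Because $A$ is a free module of finite rank over $R$ and has finite global dimension, $R$ also has finite global dimension; being connected graded commutative Noetherian of finite global dimension, $R$ is a polynomial algebra, proving (1).

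\textbf{Plan for (3) — identifying the Jacobian.} Applying $\sigma \in G$ to the identity $\sigma(f_i) = f_i$ and differentiating via the chain rule, if $M = [M_{jk}]$ is the matrix of $\sigma$ acting on the generators $x_j$, one gets
\[
\Bigl[\partial f_i/\partial x_j\Bigr] \;=\; \sigma\!\Bigl([\partial f_i/\partial x_j]\Bigr) \cdot M^T,
\]
so $J := \det([\partial f_i/\partial x_j])$ satisfies $\sigma(J) = \det(\sigma|_V)^{-1}\, J$. A direct computation on the Koszul resolution of $\kk$ over $S(V)$ identifies $\hdet(\sigma) = \det(\sigma|_V)$, so $J \in A^{\hdet^{-1}}$. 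To see $J$ generates this module freely over $R$ on both sides (note $R$ is central), compare degrees using Theorem \ref{Jacob-thm}(2): $\deg(\mathfrak{j}_{A,\kk G}) = \deg\bigl(h_A(t)/h_R(t)\bigr) = \sum_i(d_i - 1) = \deg(J)$, where $d_i = \deg(f_i)$; minimality of the degree together with semi-invariance of $J$ forces $\mathfrak{j}_{A,\kk G} =_{\kk^{\times}} J$.

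\textbf{Plan for the product formulas in (3), (4), (5).} The key lemma is a local analysis at a generic point of each reflecting hyperplane $U$. In the DVR obtained by localizing $A$ at the prime generated by $\alpha_U$, $\alpha_U$ is a uniformizer and the generator $\sigma_U$ of $G_U$ acts on it by the primitive $e_U$-th root of unity $\zeta_U = \det(\sigma_U|_V)$. For a semi-invariant $f$ with character $\chi$, writing $f = \alpha_U^k u$ with $u$ a unit at $\alpha_U = 0$, comparing leading terms gives $\chi(\sigma_U) = \zeta_U^k$. For $\chi = \hdet^{-1}$ one needs $\zeta_U^k = \zeta_U^{-1}$, i.e.\ $k \equiv e_U - 1 \pmod{e_U}$; for $\chi = \hdet$ one needs $k \equiv 1 \pmod{e_U}$. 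Thus $\prod_U \alpha_U^{e_U-1}$ divides $\mathfrak{j}_{A,\kk G}$ and $\prod_U \alpha_U$ divides $\mathfrak{a}_{A,\kk G}$. Using the classical Shephard--Todd identity $\sum_i(d_i - 1) = |\{\text{reflections in }G\}| = \sum_U(e_U - 1)$, a degree count gives equality in (3); an analogous degree comparison via Theorem \ref{Jacob-thm} (noting $\deg(\mathfrak{a}_{A,\kk G}) = |\A(G)|$) handles (4); and (5) follows by multiplying.

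\textbf{Anticipated obstacle.} The subtle point is pinning down the character realized on $\Ext^d_{A^{op}}(\kk, A)$ so that the identification $\hdet(\sigma) = \det(\sigma|_V)$ is unambiguous (a sign/inverse mistake here propagates through (3)--(5)). The other delicate step is the local DVR analysis, in particular justifying that divisibility by $\alpha_U^{e_U-1}$ (respectively $\alpha_U$) collected over distinct hyperplanes yields divisibility of the product, which requires the $\alpha_U$ to be pairwise non-associate primes in $A$ — this is clear here because they are distinct linear forms, but needs to be invoked explicitly.
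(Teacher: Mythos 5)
First, a point of comparison: the paper does not prove Theorem \ref{reflec-group-lem} at all --- it is quoted as classical, with the reader referred to Kane's book and to Chapter 6 of Orlik--Terao. So your sketch is being measured against the standard literature arguments rather than against anything in the paper.

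Your plan for (3)--(5) is essentially the standard route (Stanley's theorem on relative invariants): chain-rule semi-invariance of $J$, the expansion $f=\sum_k c_k\alpha_U^{\,k}$ with $c_k\in S(U)$ forcing the $\alpha_U$-adic valuation of an element of $A^{\chi}$ to be congruent to $k_0$ mod $e_U$ where $\zeta_U^{k_0}=\chi(\sigma_U)$, pairwise non-associateness of the $\alpha_U$, and the degree identity $\sum_i(d_i-1)=\sum_{U}(e_U-1)$. Two repairs are needed there: you must record that $J\neq 0$ (algebraic independence of the $f_i$ in characteristic $0$), and for (4) the input $\deg(\mathfrak{a}_{A,\kk G})=|\A(G)|$ is \emph{not} supplied by Theorem \ref{Jacob-thm}; it is cleaner to avoid it by checking that $P:=\prod_{U}\alpha_U$ is itself a $\hdet$-semi-invariant ($G$ permutes $\A(G)$, so $\sigma(P)=c_\sigma P$ for a character $c$, and your local analysis identifies $c(\sigma_U)=\det(\sigma_U)$ on the generating reflections, whence $c=\hdet$), and then combining $P\in A^{\hdet}=R\mathfrak{a}_{A,\kk G}$ with the divisibility $P\mid\mathfrak{a}_{A,\kk G}$ already established to force $P=_{\kk^\times}\mathfrak{a}_{A,\kk G}$.

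The genuine gap is in (1)--(2): nowhere in that part of the plan is the hypothesis that $G$ is generated by reflections used, yet the conclusion is false for general finite $G$ (for $\Z/2$ acting by $-1$ on $\kk[x,y]$, the invariant ring $\kk[x^2,xy,y^2]$ is not regular and $A$ is not free over it). The step ``once one exhibits $R=A^G$ as generated by a homogeneous system of parameters, the graded CM criterion forces $A$ to be free over $R$'' is circular: miracle flatness gives freeness of the Cohen--Macaulay module $A$ over a \emph{regular} base, and the regularity of $R$ --- equivalently, that $A^G$ is generated by $d$ algebraically independent homogeneous elements --- is precisely the Chevalley--Shephard--Todd theorem you are trying to prove. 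Molien's formula and the descent of finite global dimension along a free extension do not break this circle; they only show that \emph{if} $A$ is free over $A^G$ then $A^G$ is polynomial. What is missing is the one step where reflections enter: Chevalley's key lemma (if $g_1h_1+\cdots+g_mh_m=0$ with $g_i\in A^G$ homogeneous and $g_1\notin(g_2,\dots,g_m)A^G$, then $h_1$ lies in the Hilbert ideal $A\cdot A^G_{>0}$, proved by induction on degree using the divided-difference operators $f\mapsto (f-\sigma_U f)/\alpha_U$), applied to a minimal homogeneous generating set of $A\cdot A^G_{>0}$ to show those generators are algebraically independent and generate $A^G$. With (1) in place, your Galois-theoretic computation of the rank in (2) is fine, provided you also note $\mathrm{Frac}(A^G)=\mathrm{Frac}(A)^G$ (averaging over $G$ in characteristic $0$).
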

%
%
%
%
%

\section{Discriminant of reflection Hopf algebra} \label{section 4}

In this section, we compute the discriminant $d(A, A^H; \tr)$ when $H$ acts on $A$ as a reflection Hopf algebra and that $A^H$ is central in $A$. The trick of the proof is to find the different of $A$ over $A^H$ as defined in Lemma \ref{det(l-multi)}.
We will always assume that $H$ is a finite-dimensional semisimple Hopf algebra, and $A$ is a Noetherian connected graded AS regular domain. Suppose that $H$ acts homogeneously and inner faithfully on $A$ as a reflection Hopf algebra, and that $R = A^H$ is a central subalgebra of $A$. Notice that $A$ is a graded free Frobenius $R$-algebra by Lemma \ref{ref-gr-free-Frob}.

To prove the Theorem \ref{intro-main-thm 1}, we need several lemmas to find the different of $A$.

Since the ideal $R_{>0}A = AR_{>0}$ is a left $H$-submodule of $A$, then $\bar{A} := A/(R_{>0}A)$ has a natural left $H$-module algebra structure. And $\bar{A}$ is called the {\it covariant ring} of the $H$-action on $A$. 
Clearly, $\bar{A}$ is also a graded Frobenius algebra by Lemma \ref{quotient-Frobenius}. Hence $\bar{A}$ is AS Gorenstein.

\begin{lem}\label{hdet-bar-hdet}
	Let $\bar{A} := A/(R_{>0}A)$.
	Then $\hdet_A = \hdet_{\bar{A}}$.
\end{lem}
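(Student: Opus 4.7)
The plan is to reduce the $H$-character on $\Ext^d_{A^{op}}(\kk, A)$ to the one on $\Ext^0_{\bar A^{op}}(\kk, \bar A)$ by means of the Koszul resolution of $\bar A$ over $A$. The crucial point is that the polynomial generators of $R$ are $H$-invariant, so the top exterior power of the Koszul complex carries the trivial $H$-character and no character twist is introduced in the passage between $A$ and $\bar A$.

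First I would observe that $R = A^H$ is AS regular and commutative, hence a graded polynomial algebra $\kk[f_1,\ldots,f_d]$ on homogeneous $H$-invariant generators. Since $R$ is central in $A$ and $A$ is a graded free $R$-module by Lemma \ref{ref-gr-free-Frob}, the sequence $f_1,\ldots,f_d$ is a central regular sequence in $A$ with quotient $\bar A$. Writing $V=\kk f_1\oplus\cdots\oplus\kk f_d$ with its trivial $H$-action, the Koszul complex $A\otimes_{\kk}\wedge^{\bullet}V$ is an $H$-equivariant graded free resolution of $\bar A$ as an $A$-$A$-bimodule, and a direct computation yields the $H$-equivariant isomorphism
\begin{equation*}
\Ext^i_{A^{op}}(\bar A, A)\cong\begin{cases} \bar A\otimes_{\kk}\wedge^{d}V^{*}, & i=d,\\ 0, & i\neq d,\end{cases}
\end{equation*}
in which $\wedge^{d}V^{*}$ is one-dimensional with trivial $H$-action.

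Next I would invoke the Grothendieck spectral sequence associated to the factorization $\Hom_{A^{op}}(\kk,-)=\Hom_{\bar A^{op}}(\kk,-)\circ\Hom_{A^{op}}(\bar A,-)$,
\begin{equation*}
E_2^{p,q}=\Ext^{p}_{\bar A^{op}}\!\bigl(\kk,\,\Ext^{q}_{A^{op}}(\bar A,A)\bigr)\;\Longrightarrow\;\Ext^{p+q}_{A^{op}}(\kk,A),
\end{equation*}
which is $H$-equivariant throughout and, by the previous display, concentrated in the column $q=d$. Since $\bar A$ is AS Gorenstein of dimension $0$, only $E_2^{0,d}$ survives in total degree $d$, giving the $H$-module isomorphism $\Ext^{d}_{A^{op}}(\kk,A)\cong\Ext^{0}_{\bar A^{op}}(\kk,\bar A)\otimes_{\kk}\wedge^{d}V^{*}$. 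Because $\wedge^{d}V^{*}$ is $H$-trivial, the one-dimensional $H$-characters $\eta_A$ and $\eta_{\bar A}$ coincide, and composing with the antipode yields $\hdet_A=\eta_A\circ S=\eta_{\bar A}\circ S=\hdet_{\bar A}$. The main obstacle is the meticulous bookkeeping of $H$-equivariance through the Koszul complex and the spectral sequence; this succeeds precisely because the $f_i$ lie in $A^H$, which makes the Koszul differentials and the comparison maps automatically $H$-linear, so no nontrivial character is absorbed along the way.
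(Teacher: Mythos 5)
Your proof is correct and follows essentially the same route as the paper: both arguments run the Grothendieck spectral sequence for the composite functor $\Hom_{\bar A^{op}}(\kk,-)\circ\Hom_{A^{op}}(\bar A,-)\cong\Hom_{\bar A^{op}}(\kk,-)\circ\Hom_{R^{op}}(\kk,-)$, observe that the inner $\Ext$ is concentrated in degree $d$ with a one-dimensional twist built entirely from $H$-invariant data, and conclude that $\Ext^d_{A^{op}}(\kk,A)\cong\Hom_{\bar A^{op}}(\kk,\bar A)$ as $H$-modules. The only cosmetic difference is that you compute the inner derived functor via the Koszul resolution of $\bar A$ on the regular sequence $f_1,\dots,f_d$, whereas the paper uses the freeness of $A$ over $R$ to write $\Ext^q_{R^{op}}(\kk,A)\cong\Ext^q_{R^{op}}(\kk,R)\otimes_R A$; these are the same computation since that Koszul complex is the base change of the Koszul resolution of $\kk$ over $R$.
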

\begin{proof}
	For any left $A\#H$-module $M$, we have canonical isomorphisms induced by the adjointness
    $$\Hom_{\bar{A}^{op}}(\kk, \Hom_{R^{op}}(\kk, M)) \cong \Hom_{\bar{A}^{op}}(\kk, \Hom_{A^{op}}(\bar{A}, M)) \cong \Hom_{A^{op}}(\kk, M)$$
	where the $H$-module structure is given by \eqref{H-module-on-Hom}.
	From these isomorphisms, we obtain a convergent spectral sequence of $H$-modules
	$$E_2^{pq} = \Ext^p_{A^{op}}(\kk, \Ext^q_{R}(\kk, A)) \Longrightarrow \Ext^{p+q}_{A^{op}}(\kk, A).$$
	By hypothesis, $A$ is a free $R$-module, we have $\Ext^i_{R^{op}}(\kk, A) \cong \Ext^i_{R^{op}}(\kk, R) \otimes_R A$ for all $i \in \N$. It follows that
	$$\Hom_{\bar{A}^{op}}(\kk, \bar{A}) \cong \Ext^{d}_{A^{op}}(\kk, A)$$
	as $H$-modules. By the definition of the homological determinant, $\hdet_A = \hdet_{\bar{A}}$.
\end{proof}

Similar to \eqref{H-module-on-Hom}, $\Hom_R(A, R)$ is also an $H$-module via
$$(h \rhu f) (a) := f(Sh \rhu a),$$
for all $h \in H$, $f \in \Hom_R(A, R)$, $a \in A$.

\begin{lem}\label{lem2}
	Let $\theta$ be a homogenous generator of $\Hom_R(A, R)$ as a graded $A$-module with $l = \deg(h_{\bar{A}}(t)) = -\deg(\theta)$. Then $h \rhu \theta = \hdet_{A}(h) \theta$ for all $h \in H$.
\end{lem}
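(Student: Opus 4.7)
Since $\Hom_R(A,R)$ is graded free of rank one over $A$ with homogeneous generator $\theta$ in degree $-l$, its degree $-l$ piece is the one-dimensional space $\kk\theta$. The $H$-action preserves the grading, so $h\rhu\theta = \chi(h)\theta$ for a unique algebra morphism $\chi\colon H \to \kk$; the task reduces to showing $\chi = \hdet_A$.

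The plan is to reduce the problem to the covariant algebra $\bar A := A/(R_{>0}A)$, which is a finite-dimensional graded Frobenius algebra. Since $R = A^H$, the ideal $R_{>0}A$ is $H$-stable, so $\bar A$ inherits an $H$-module algebra structure and $\theta$ descends to a Frobenius form $\bar\theta : \bar A \to \kk$ of degree $-l$ (Lemma \ref{quotient-Frobenius}). A short check using the formula $(h\rhu f)(a) = f(Sh\rhu a)$ shows that this descent is $H$-equivariant, hence $h\rhu\bar\theta = \chi(h)\bar\theta$ in $\Hom_\kk(\bar A,\kk)$. By Lemma \ref{hdet-bar-hdet}, it now suffices to identify $\chi$ with $\hdet_{\bar A}$.

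To finish, work inside $\bar A$: non-degeneracy of the pairing $(x,y) \mapsto \bar\theta(xy)$ in each bidegree forces the socle $\Hom_{\bar A^{op}}(\kk,\bar A)$ to equal the one-dimensional top component $\bar A_l$, and formula \eqref{H-module-on-Hom} specialised to $M = \kk$ collapses to show that the induced $H$-action on this socle is the restriction of the $H$-module algebra action on $\bar A$. Applying Definition \ref{hdet-Definition} to $\bar A$ (AS Gorenstein of dimension $0$), any nonzero $e \in \bar A_l$ satisfies $h\rhu e = \hdet_{\bar A}(S^{-1}h)\,e$, and $\bar\theta(e) \neq 0$. Then
\[
\chi(h)\bar\theta(e) = (h\rhu\bar\theta)(e) = \bar\theta(Sh\rhu e) = \hdet_{\bar A}(S^{-1}(Sh))\,\bar\theta(e) = \hdet_{\bar A}(h)\,\bar\theta(e)
\]
yields $\chi = \hdet_{\bar A} = \hdet_A$, as desired.

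The main obstacle is the pair of $H$-equivariance checks: that the descent $\Hom_R(A,R) \to \Hom_\kk(\bar A,\kk)$ sending $\theta \mapsto \bar\theta$ intertwines the two $H$-actions, and that the abstract action of \eqref{H-module-on-Hom} on $\Hom_{\bar A^{op}}(\kk,\bar A)$ simplifies to the plain restricted action on $\bar A_l \subseteq \bar A$. Both come down to unwinding Sweedler-notation formulas and present no serious difficulty.
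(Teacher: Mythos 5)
Your proposal is correct and follows essentially the same route as the paper: reduce to the covariant ring $\bar{A}$ via the descent $\theta \mapsto \bar\theta$, use that $\bar{A}$ is AS Gorenstein of dimension $0$ to see $\dim_\kk \bar{A}_l = 1$, evaluate $(h \rhu \bar\theta)(e)$ in two ways, and conclude with Lemma \ref{hdet-bar-hdet}. The only cosmetic difference is that you justify the identification of the socle with $\bar{A}_l$ via non-degeneracy of the Frobenius pairing, whereas the paper uses the injection $\pi^*: \Hom_{\bar{A}^{op}}(\kk,\bar{A}) \to \bar{A}$.
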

\begin{proof}
	The canonical surjective morphism $\pi: \bar{A} \to \kk = \bar{A}/\bar{A}_{>0}$ induces an injective morphism $$\pi^*: \Hom_{\bar{A}^{op}}(\kk, \bar{A}) \to \Hom_{\bar{A}^{op}}(\bar{A}, \bar{A}) = \bar{A}.$$
	Since $\bar{A}$ is an AS Gorenstein algebra of dimension 0, then $\Hom_{\bar{A}}(\kk, \bar{A}) \cong \kk(l)$. 
	Hence
	$$\dim_{\kk} (\bar{A}_l) = \dim_{\kk} (\pi^*(\Hom_{\bar{A}^{op}}(\kk, \bar{A})) ) = 1.$$
	Let $e$ be a non-zero element of $\bar{A}_l$. By definition of homological determinant, for all $h \in H$,
	\begin{equation}\label{Sh-e-hdet}
		Sh \rhu e = \hdet_{\bar{A}}(h)e.
	\end{equation}
	
	Since $h_A(t) = h_{\bar{A}}(t) h_R(t)$, then $\Hom_R(A, R) \cong A(-l)$ as graded $A$-module.
	Thus there exists an algebra homomorphism $\xi: H \to \kk$ such that $h \rhu \theta = \xi(h) \theta$ for all $h \in H$.
	Consider the canonical surjective $H$-module morphism
	$$\Upsilon: \Hom_R(A, R) \To \Hom_R(A, R) \otimes_R \kk \cong \Hom_{\kk}(\bar{A}, \kk), \qquad f \mapsto \big( \bar{a} \mapsto \overline{f(a)} \big).$$
	Write $\bar{\theta} = \Upsilon(\theta)$.
	Then
	$$\xi(h)\bar{\theta}(e) = \Upsilon(h \rhu\theta)(e) = (h \rhu \Upsilon(\theta))(e) = \bar{\theta}(Sh \rhu e) \stackrel{\eqref{Sh-e-hdet}}{=\!=} \bar{\theta}(\hdet_{\bar{A}}(h) e) = \hdet_{\bar{A}}(h) \bar{\theta}(e).$$
	Since $\bar{\theta}$ is a non-zero homogenous element of degree $-l$ in $\Hom_{\kk}(\bar{A}, \kk)$, then $ \bar{\theta}(e) \neq 0$. Hence $\xi = \hdet_{\bar{A}} = \hdet_A$ by Lemma \ref{hdet-bar-hdet}. We have thus proved the lemma.
\end{proof}


%
\begin{lem}\label{tr-H-mor}
	The Hattori-Stallings trace map $\tr: A \to R$ is a graded $H$-module morphism.
\end{lem}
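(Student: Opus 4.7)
The plan is to realize the Hattori--Stallings trace as a composition of three natural $H$-equivariant maps and deduce $H$-linearity by functoriality. Recall that $\tr = \tr_A \circ \iota$, where $\iota : A \to \End_R(A)$ is the left multiplication embedding $a \mapsto L_a$, and that $\tr_A$ factors as $\psi_A \circ \varphi_A^{-1}$ through the natural isomorphism $\varphi_A : A \otimes_R \Hom_R(A,R) \to \End_R(A)$ from Section \ref{section 1} and the evaluation map $\psi_A : a \otimes f \mapsto f(a)$. Equip each object with its natural $H$-module structure: $A$ with its given action, $R = A^H$ with the trivial action, $\Hom_R(A,R)$ with the action $(h \rhu f)(a) = f(Sh \rhu a)$ introduced just before Lemma \ref{lem2}, and $\End_R(A)$ and $A \otimes_R \Hom_R(A,R)$ with the induced actions. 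The plan is then to verify that each of the three factors $\iota$, $\varphi_A$, $\psi_A$ is $H$-equivariant.

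The first step is to check $H$-equivariance of $\iota$ and $\varphi_A$. Both checks are routine unwindings of the module-algebra axiom $h \rhu (ab) = \sum (h_1 \rhu a)(h_2 \rhu b)$ combined with the antipode axiom, and they crucially use that $R$ carries the trivial $H$-action, so any factor landing in $R$ can freely be pulled through $\Delta$ and $\varepsilon$.

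The hard part is the $H$-equivariance of $\psi_A$ with trivial action on $R$. A direct expansion gives
$$
\psi_A\bigl(h \rhu (a \otimes f)\bigr) \;=\; \sum_{(h)} (h_2 \rhu f)(h_1 \rhu a) \;=\; \sum_{(h)} f\bigl((Sh_2\, h_1) \rhu a\bigr),
$$
so the target equality $\psi_A(h \rhu (a \otimes f)) = \varepsilon(h)\, f(a)$ reduces to
$$
\sum_{(h)} S(h_2)\, h_1 \;=\; \varepsilon(h)\, 1_H.
$$
This is the antipode axiom for the opposite Hopf algebra $H^{\mathrm{op}}$, and it holds if and only if $S$ equals its own inverse. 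At this point the Larson--Radford theorem intervenes: since $H$ is semisimple and char $\kk = 0$, $H$ is also cosemisimple, whence $S^2 = \id_H$, and the required identity follows.

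Finally, the grading is automatic. By Lemma \ref{ref-gr-free-Frob}, $A$ is graded free over $R$, so one can pick a homogeneous $R$-basis of $A$; with respect to such a basis, $\varphi_A$ and $\psi_A$ are morphisms of graded $R$-modules of degree zero, and $\iota$ is evidently graded of degree zero. Composing the three graded $H$-linear maps yields the desired graded $H$-module morphism $\tr : A \to R$.
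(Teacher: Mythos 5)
Your proposal is correct and follows essentially the same route as the paper: factor $\tr$ through $\iota$, $\varphi_A$, and $\psi_A$, check $H$-equivariance of each, and reduce the only nontrivial point (equivariance of $\psi_A$) to the identity $\sum_{(h)} S(h_2)h_1 = \varepsilon(h)1_H$, which holds because $S^2 = \id_H$ for a semisimple Hopf algebra in characteristic zero. Your explicit appeal to Larson--Radford is simply the justification the paper leaves implicit in its phrase ``since $H$ is semisimple.''
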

\begin{proof}
	Note that $\End_R(A)$ is an $H$-module via
	$$(h \rhu F) (a) = \sum_{(h)} h_1 \rhu F(Sh_2 \rhu a),$$
	for any $h \in H$, $F \in \End_R(A)$ and $a \in A$.
	Clearly, the natural embedding $\iota: A \to \End_R(A), \, a \mapsto a_L$ is also an $H$-module morphism, and $\tr$ is a graded morphism.
	To prove that $\tr$ is a $H$-module morphism, we only need to prove that
	$$\varphi_A: A \otimes_R \Hom_{R}(A, R) \To \End_R(A), \qquad  x \otimes f \mapsto \big(y \mapsto xf(y) \big),$$
	and 
	$$\psi_A: A \otimes_R \Hom_{R}(A, R) \To R, \qquad x \otimes f \mapsto f(x),$$
	are $H$-module morphisms.
	Since for any $h \in H$, $x, y \in A$, and $f \in \Hom_R(A, R)$
	\begin{align*}
		\varphi_A(h \rhu (x \otimes f))(y)
		& = \varphi_A(\sum_{(h)} h_1 \rhu x \otimes h_2 \rhu f)(y) \\
		& = \sum_{(h)} (h_1 \rhu x) (h_2 \rhu f)(y) \\
		& = \sum_{(h)} (h_1 \rhu x) f(Sh_2 \rhu y) \\
		& = h \rhu \varphi_A(x \otimes f)(y).
	\end{align*}
	Then $\varphi_A$ is a left $H$-module isomorphism.
	Note that $S^2 = \id_H$ since $H$ is semisimple. Then
	\begin{align*}
		\psi_A(h \rhu (x \otimes f))
		& = \psi_A(\sum_{(h)} h_1 \rhu x \otimes h_2 \rhu f) & \\
		& = \sum_{(h)} (h_2 \rhu f) (h_1 \rhu x) & \\
		& = f((\sum_{(h)} S(h_2) h_1) \rhu x) & \\
		& = \varepsilon(h) f(x) & \text{ since } S^2 = \id_H \\
		& = h \rhu \psi_A(x \otimes f).
	\end{align*}
	So $\psi_A$ is a left $H$-module morphism. Therefore $\tr$ is a graded $H$-module morphism.
\end{proof}

In the following, we prove that the Jacobian $\mathfrak{j}_{A, H}$ is a different of $A$ over $R$.
\begin{lem}\label{degree-lem}
	Retain the notation in the Lemma \ref{lem2} and \ref{tr-H-mor}. Then $\tr =_{\kk^{\times}} \theta \cdot \mathfrak{j}_{A, H}$. 
\end{lem}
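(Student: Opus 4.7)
The strategy is to compute the element $\omega \in A$ for which $\tr = \theta \cdot \omega$ (it exists and is unique because $\Hom_R(A,R)$ is graded free of rank one as a right $A$-module on $\theta$, by Lemma \ref{ref-gr-free-Frob} and the reasoning in Lemma \ref{AS-gr-free-Frob}) and to identify $\omega$ with $\mathfrak{j}_{A,H}$ up to a nonzero scalar by transport of $H$-structure.

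The first key step is a Hopf-theoretic compatibility for the two structures on $\Hom_R(A,R)$: for any $f \in \Hom_R(A,R)$, $a \in A$, and $h \in H$,
\[
h \rhu (f \cdot a) \;=\; \sum_{(h)} (h_1 \rhu f) \cdot (h_2 \rhu a).
\]
This is a direct Sweedler computation. Evaluating both sides at $x \in A$, using $(g \rhu f)(x) = f(Sg \rhu x)$, the identity $\Delta \circ S = (S \otimes S) \circ \tau \circ \Delta$, coassociativity, and the antipode axiom $\sum S(g_{(1)})g_{(2)} = \varepsilon(g)$, one verifies that both sides equal $f\bigl(a \cdot (Sh \rhu x)\bigr)$.

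Next, apply this identity to $\tr = \theta \cdot \omega$. Lemma \ref{lem2} gives $h_1 \rhu \theta = \hdet_A(h_1)\theta$, hence
\[
h \rhu \tr \;=\; h \rhu (\theta \cdot \omega) \;=\; \theta \cdot \Bigl(\sum_{(h)} \hdet_A(h_1)\,(h_2 \rhu \omega)\Bigr).
\]
On the other hand, Lemma \ref{tr-H-mor} together with the fact that $R = A^H$ carries the trivial $H$-action via $\varepsilon$ yields $h \rhu \tr = \varepsilon(h)\tr$. Since $\Theta\colon a \mapsto \theta \cdot a$ is injective, cancelling $\theta \cdot -$ gives
\[
\sum_{(h)} \hdet_A(h_1)\,(h_2 \rhu \omega) \;=\; \varepsilon(h)\,\omega \qquad \text{for all } h \in H.
\]
Viewing $\hdet_A$ as a group-like in $H^*$ with convolution inverse $\hdet_A \circ S = \hdet_A^{-1}$, and $\phi_\omega\colon h \mapsto h \rhu \omega$ as an element of $\Hom(H,A)$, the displayed equation is $\hdet_A * \phi_\omega = u_\omega \varepsilon$ in $\Hom(H,A)$, where $u_\omega(1)=\omega$. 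Convolving from the left with $\hdet_A^{-1}$ yields $\phi_\omega(h) = \hdet_A^{-1}(h)\,\omega$; that is, $\omega \in A^{\hdet^{-1}}$.

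By Theorem \ref{Jacob-thm}(1), $A^{\hdet^{-1}} = R\,\mathfrak{j}_{A,H}$ is $R$-free of rank one, so $\omega = c\,\mathfrak{j}_{A,H}$ for a unique $c \in R$. A degree count pins down $c$: from $\deg\tr = 0$, $\deg\theta = -l$ (where $l = \deg h_{\bar A}(t)$), and the graded-freeness factorization $h_A(t) = h_{\bar A}(t)\,h_R(t)$, we get $\deg\omega = l$, which by Theorem \ref{Jacob-thm}(2) equals $\deg\mathfrak{j}_{A,H}$. Hence $c \in R_0 = \kk$, and $c \neq 0$ since $\tr(1) = \rank_R(A)\cdot 1_R \neq 0$ in characteristic zero. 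This gives $\tr =_{\kk^\times} \theta \cdot \mathfrak{j}_{A,H}$. The main technical obstacle is the Sweedler verification of the compatibility $h \rhu (f \cdot a) = \sum (h_1 \rhu f)\cdot(h_2 \rhu a)$; once that is done, the remainder is formal convolution algebra together with the degree bookkeeping furnished by Theorem \ref{Jacob-thm}.
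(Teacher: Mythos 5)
Your proposal is correct and follows essentially the same route as the paper: both use the module-compatibility $h \rhu (\theta\cdot\omega) = \sum (h_1\rhu\theta)\cdot(h_2\rhu\omega)$ together with Lemmas \ref{lem2} and \ref{tr-H-mor} to show $\omega\in A^{\hdet^{-1}}=R\,\mathfrak{j}_{A,H}$, then conclude by the degree count from Theorem \ref{Jacob-thm}(2). Your convolution-algebra packaging of the cancellation of $\hdet$ and the explicit check that the scalar is nonzero are harmless reorganizations of the paper's argument.
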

\begin{proof}
	Recall that $\theta$ is a homogenous generator of graded $A$-module $\Hom_R(A, R) (\cong A(-l))$. Since $\tr: A \to R$ is a graded map, then there exists a homogenous element $\omega \in A$ of degree $l$ such that $\tr = \theta \cdot \omega$. It is easy to check that $h \rhu (\theta \cdot \omega) = \sum_{(h)} (h_1 \rhu \theta) \cdot (h_2 \rhu \omega)$ for any $h \in H$. Then we have
	\begin{align*}
		\theta \cdot (h \rhu \omega) & = \sum_{(h)} \hdet^{-1}(h_1) \hdet(h_2) \theta \cdot (h_3 \rhu \omega) & \\
		& = \sum_{(h)} \hdet^{-1}(h_1) (h_2 \rhu \theta) \cdot (h_3 \rhu \omega) & \text{ by Lemma } \ref{lem2} \\
		& = \sum_{(h)} \hdet^{-1}(h_1) h_2 \rhu (\theta \cdot \omega) & \\
		& = \sum_{(h)} \hdet^{-1}(h_1) h_2 \rhu \tr & \\
		& = \hdet^{-1}(h) \tr & \text{ by Lemma } \ref{tr-H-mor} \\
		& = \theta \cdot (\hdet^{-1}(h)\omega). & 
	\end{align*}
	It follows that $\omega \in A^{\hdet^{-1}} = R \mathfrak{j}_{A, H}$.
	According to Theorem \ref{Jacob-thm} (2), $\deg(\mathfrak{j}_{A,H}) = l$. Thus $\omega =_{\kk^{\times}} \mathfrak{j}_{A, H}$, that is, $\tr =_{\kk^{\times}} \theta \cdot \mathfrak{j}_{A,H}$.
\end{proof}

Now we are ready to prove Theorem \ref{intro-main-thm 1}.

\begin{thm}\label{main-thm}
	Suppose that $H$ acts on $A$ as a reflection Hopf algebra, and that $R:=A^H$ is central in $A$. Then
	$$d(A, R; \tr) =_{\kk^{\times}} \mathfrak{j}_{A, H}^n \qquad \text{ and } \qquad \sqrt{(\delta_{A, H})} = \sqrt{(d(A, R; \tr))},$$
	where $\tr$ is the Hattori-Stallings trace map, and $n$ is the rank of $R$-module $A$.
\end{thm}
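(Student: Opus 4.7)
First I would use Lemma \ref{ref-gr-free-Frob} to see that $A$ is a graded free Frobenius $R$-algebra with some Nakayama automorphism $\mu$. Comparing Lemma \ref{degree-lem}, which gives $\tr =_{\kk^\times} \theta \cdot \mathfrak{j}_{A, H}$, with Lemma \ref{det(l-multi)}, which writes $\tr = \theta \cdot \omega$ for the Frobenius different $\omega$, identifies $\omega =_{\kk^\times} \mathfrak{j}_{A, H}$. In particular, $\mathfrak{j}_{A, H}$ is $\mu$-normal and
\[
d(A, R; \tr) =_{R^\times} \nr(\mathfrak{j}_{A, H}),
\]
so the first assertion reduces to $\nr(\mathfrak{j}_{A, H}) =_{\kk^\times} \mathfrak{j}_{A, H}^n$.

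The crux is to show $\mathfrak{j}_{A, H}^n \in R$. I would pass to $K = \mathrm{Frac}(R)$ and $Q = A \otimes_R K$; since $R$ is central in the Noetherian domain $A$ and $A$ is $R$-finite of rank $n$, $Q$ is a division $K$-algebra of $K$-dimension $n$. Cayley--Hamilton makes $\mathfrak{j}_{A, H}$ algebraic over $K$ of degree dividing $n$, so $K[\mathfrak{j}_{A, H}] \subseteq Q$ is a subfield with $[K[\mathfrak{j}_{A, H}] : K] \mid n$. The $H$-action preserves $K[\mathfrak{j}_{A, H}]$ (as $\mathfrak{j}_{A, H}$ is a $\hdet^{-1}$-semi-invariant) and satisfies $K[\mathfrak{j}_{A, H}]^H = K$ (from $Q^H = K$, itself a consequence of $R = A^H$). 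By the Etingof--Walton theorem \cite[Theorem 1.3]{EW}, the Hopf quotient acting inner faithfully on this commutative domain must be a group algebra $\kk\Gamma$, making $K[\mathfrak{j}_{A, H}]/K$ classically Galois with $|\Gamma| \mid n$. Since $\mathfrak{j}_{A, H}$ generates the extension as a $\hdet^{-1}$-eigenvector, $\hdet^{-1}$ restricts to a faithful character of $\Gamma$, so $\mathrm{ord}(\hdet) = |\Gamma|$ divides $n$; thus $\hdet^n = \varepsilon$ and $\mathfrak{j}_{A, H}^n \in A^H = R$. Because $\mathfrak{j}_{A, H}^n$ is then central in $A$, left multiplication by it on the rank-$n$ free $R$-module $A$ is scalar and $\nr(\mathfrak{j}_{A, H}^n) = \mathfrak{j}_{A, H}^{n^2}$; multiplicativity of $\nr$ yields $\nr(\mathfrak{j}_{A, H})^n = \mathfrak{j}_{A, H}^{n^2}$, and taking $n$-th roots in the graded UFD $R$ (both sides being homogeneous of degree $n^2 \deg \mathfrak{j}_{A, H}$) forces $\nr(\mathfrak{j}_{A, H}) =_{\kk^\times} \mathfrak{j}_{A, H}^n$.

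For the radical identity, Theorem \ref{Jacob-thm}(4) supplies $a \in A$ with $\mathfrak{j}_{A, H} = a \mathfrak{a}_{A, H}$ while Theorem \ref{Jacob-thm}(3) makes $\mathfrak{j}_{A, H}$ and $\mathfrak{a}_{A, H}$ commute up to $\kk^\times$; rearranging factors gives $d(A, R; \tr) \in (\delta_{A, H})_A$ and $\delta_{A, H}^n \in (d(A, R; \tr))_A$, so the two elements generate ideals with the same radical in $A$, and faithful flatness of $R \subseteq A$ (together with $\sqrt{I} \cap R = \sqrt{I \cap R}$) passes the equality down to $R$. The main obstacle is the Galois reduction of the middle paragraph: the centrality of $R$ is what makes $K[\mathfrak{j}_{A, H}]$ a commutative $H$-stable subfield of $Q$ with fixed subfield $K$, and Etingof--Walton is what promotes the Hopf action on it to a genuine group action, thereby pinning down $\mathrm{ord}(\hdet) \mid n$, which is the crucial divisibility forcing $\mathfrak{j}_{A, H}^n \in R$.
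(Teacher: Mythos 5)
Your opening reduction ($\omega =_{\kk^{\times}} \mathfrak{j}_{A,H}$ via Lemmas \ref{det(l-multi)} and \ref{degree-lem}, hence $d(A,R;\tr) =_{\kk^{\times}} \nr(\mathfrak{j}_{A,H})$), your passage to $K=\mathrm{Frac}(R)$ and the division ring $Q=A\otimes_R K$, and your closing steps (the $n$-th-root argument in the UFD $R$, and the mutual divisibility of $\mathfrak{j}_{A,H}^n$ and $\delta_{A,H}$ via Theorem \ref{Jacob-thm}(3),(4)) all track the paper's proof or are harmless variants of it. The genuine gap is in the middle step, which is also where the paper's real work lies: you invoke \cite[Theorem 1.3]{EW} for the action on the commutative domain $K[\mathfrak{j}_{A,H}]$, but that theorem requires the base field to be \emph{algebraically closed} (the paper itself states it only in that form in Section \ref{section 3}), whereas the standing hypothesis here is merely $\operatorname{char}\kk=0$. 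The gap cannot be closed by extending scalars, since $K[\mathfrak{j}_{A,H}]\otimes_{\kk}\overline{\kk}$ need not remain a domain. So, as written, your proof of the crucial divisibility $\operatorname{ord}(\hdet)\mid n$ (equivalently $\mathfrak{j}_{A,H}^n\in R$) only works over algebraically closed $\kk$. A smaller imprecision: $[K[\mathfrak{j}_{A,H}]:K]\mid n$ does not follow from Cayley--Hamilton alone (which gives only $\le n$); it uses that $Q$ is free over the subfield $K(\mathfrak{j}_{A,H})$.

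The Etingof--Walton detour is in any case avoidable, and the paper avoids it. There, one observes that $\hdet$ is a group-like element of the finite-dimensional Hopf algebra $H^*$, hence of finite order $s$, so $\mathfrak{j}_{A,H}^s\in R$ and the monic minimal polynomial $m(X)=X^r+\lambda_{r-1}X^{r-1}+\cdots+\lambda_0$ of $\mathfrak{j}_{A,H}$ over $K$ divides $X^s-\mathfrak{j}_{A,H}^s$; a coefficient comparison then yields $m(X)=X^r-\mathfrak{j}_{A,H}^r$ with $r=s$ and $n=rk$, whence $d(A,R;\tr)=_{\kk^{\times}} f(0)=_{\kk^{\times}}\mathfrak{j}_{A,H}^n$. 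Alternatively, in your own setup one can argue directly: applying $h\in H$ to the relation $\mathfrak{j}_{A,H}^r=-\sum_{i<r}\lambda_i\mathfrak{j}_{A,H}^i$ and using that $1,\mathfrak{j}_{A,H},\dots,\mathfrak{j}_{A,H}^{r-1}$ are $K$-independent eigenvectors for the characters $\hdet^{-i}$ gives $\lambda_i\bigl(\hdet^{-r}(h)-\hdet^{-i}(h)\bigr)=0$ for all $i$; since $\lambda_0\neq 0$ ($Q$ is a division ring), this forces $\hdet^{r}=\varepsilon$, hence $\operatorname{ord}(\hdet)\mid r\mid n$ over any $\kk$ of characteristic zero --- which is exactly the fact your Galois-theoretic argument was aiming at, with no group algebra needed.
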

\begin{proof}
	Without loss of generality, assume that $H \neq \kk$ and $n > 1$.
	By Lemma \ref{det(l-multi)} and \ref{degree-lem},
	$$d(A, R; \tr) =_{\kk^{\times}} \nr(\mathfrak{j}_{A, H}) = \det((\mathfrak{j}_{A, H})_L).$$ 
	
	Let $K$ be the fractional field of $R$. Then $Q:= Q(A) = A\otimes_R K$ is a finite-dimensional division ring over $K$ by Posner's theorem \cite[Theorem 13.6.5]{MR}.
	Let $f(X) \in K[X]$ be the characteristic polynomial of left multiplication of ${\mathfrak{j}_{A,H}}$ in $\End_K(Q) = M_{n}(K)$, and
	$$m(X) = X^r + \lambda_{r-1}X^{r-1} + \cdots + \lambda_0 \in K[X]$$
	be the monic minimal polynomial of $\mathfrak{j}_{A,H}$ over $K$.
	Since $K[\mathfrak{j}_{A, H}]$ is a subfield of $Q$, it follows that $m(X)$ is irreducible in $K[X]$. Then there is some $k > 0$ such that $m(X)^k = f(X)$.
	Notice that $\hdet$ is a group-like element of the dual Hopf algebra $H^*$. Then there exists some $s > 0$ such that $\hdet^s = \varepsilon$ and $\mathfrak{j}_{A, H}^s \in R$, where $\varepsilon$ is the counit of $H$. 
	Hence $X^s - \mathfrak{j}_{A, H}^s = m(X)^t$ for some $t > 0$. Recall that $\mathrm{char} \kk = 0$.
	By comparing their coefficients, it follows that
	$$t = 1, \; s = r, \text{ and } \lambda_i = \begin{cases}
		0, & i = 1, \dots, r-1 \\
		-\mathfrak{j}_{A, H}^r, & i = 0.
	\end{cases}$$
	Note that $\deg(f(X)) = \rank_R(A) = n$. Hence $n = rk$, and the discriminant of $A$ is
	$$d(A, R; \tr) =_{\kk^{\times}} \det(\mathfrak{j}_{A, H}) =_{\kk^{\times}} f(0) =_{\kk^{\times}} m(0)^k =_{\kk^{\times}} (-\mathfrak{j}_{A, H}^r)^{k} =_{\kk^{\times}} \mathfrak{j}_{A, H}^n.$$
	
	By Definition \ref{Jacb-ref-arrang-disc}, $d(A, R; \tr)$ divides $\delta_{A, H}^n$ in $A$, that is, there exists $a, b \in A$ such that $d(A, R; \tr) = a \delta_{A, H}^n = \delta_{A, H}^n b$. Since $A$ is a domain and $d(A, R; \tr), \delta_{A, H}^n \in R$, then $a, b \in R$.
	By Theorem \ref{Jacob-thm} (4), $\mathfrak{a}_{A, H}$ divides $\mathfrak{j}_{A, H}$ in $A$. This implies that $\delta_{A, H}$ divides $d(A, R; \tr)$ in $A$, also in $R$.
	So the conclusion is verified.
\end{proof}

\begin{rk}
	By \cite[Proposition 1.8.(1)]{KWZ}, the rank of free $R$-module $A$ is equal to $\dim_{\kk}(H)$ if $A\#H$ is prime in Theorem \ref{main-thm}.
\end{rk}

\begin{ex}\label{disc-ex}
	Let $\kk = \mathbb{C}$. Fix an integral $n \geq 2$. Let $G = \langle x \rangle \times \langle y \rangle$ be the direct product of two cyclic groups of order $n$, and let $p = -e^{\frac{\pi \sqrt{-1}}{n}}, q = e^{\frac{2\pi \sqrt{-1}}{n}}$ in $\kk$. 
	Let $\sigma$ denote the automotphsim of $\kk G$ given by $\sigma(x^iy^j) = x^jy^i$. Define the semisimple Hopf algebra $H_{2n^2}$ (\cite[section 2]{Pan}) as the factor ring of the Ore extension of $\kk G$:
	$$H_{2n^2} = \frac{\kk G[z; \sigma]}{(z^2 - (\frac{1}{n}\sum_{i,j=0}^{n-1} q^{-ij} x^iy^j))},$$
	where the Hopf structure of $\kk G$ is extended to $H_{2n^2}$ by setting:
	$$\Delta(z) = \frac{1}{n}\sum_{s,t=0}^{n-1} q^{-st} x^sz \otimes y^tz, \qquad \varepsilon(z) = 1, \qquad \text{and} \qquad S(z) = z.$$

	For any $0 \leq i, j\leq n-1$, the skew polynomial ring $A:= {\kk \langle u, v \rangle} / {(vu - p^{i^2-j^2}uv )}$ is a left $H$-module algebra via
	$$x \rhu u = q^i u, \quad x \rhu v = q^j v, \quad y \rhu u = q^j u, \quad y \rhu v = q^i v, \quad z \rhu u = q^{ij} v , \quad z \rhu v = u.$$
	The homological determinant $\hdet: H \to \kk$ is given by
	$$\hdet(x) = q^{i+j}, \qquad \hdet(x) = q^{i+j}, \qquad \hdet(z) = - p^{(i+j)^2}.$$
	The $H_{2n^2}$-action on $A$ is inner faithfully if and only if $(i^2-j^2, n) = 1$ (\cite[Theorem 3.7]{FKMW}).
	Now assume that $(i^2-j^2, n) = 1$, $n$ is odd and $i-j$ is even. Then the fixed subring $A^H$ is the polynomial ring $\kk [u^n + v^n, u^nv^n]$ (\cite[Theorem 3.10]{FKMW}). This implies that $H_{2n^2}$ acts on $A$ as a reflection Hopf algebra. It is clear that $R:=A^H$ is a central subalgebra of $A$.
    By a straightforward calculation,
    $$\mathfrak{j}_{A, H_{2n^2}} =_{\kk^{\times}} u^{n-1}v^{n-1}(u^n-v^n), \qquad \mathfrak{a}_{A, H_{2n^2}} =_{\kk^{\times}} uv(u^n-v^n).$$
    It follows that
    $$d(A, A^H; \tr) =_{\kk^\times} (uv)^{2(n-1)n^2} (u^n-v^n)^{2n^2}, \qquad \delta_{A, H_{2n^2}} =_{\kk^{\times}} u^nv^n(u^n-v^n)^2.$$
\end{ex}

We now turn to the formula for the discriminant from classical commutative invariant theory.
Applying the Theorem \ref{main-thm}, we obtain the following corollary.

\begin{cor}\label{Di-fixed-ring}
	Let $A$ be the symmetric algebra $S(V)$, $G \subset \mathrm{GL}(V)$ be a finite reflection group and $R = A^G$. Then
	$$d(A,R;\tr) =_{\kk^{\times}} \mathfrak{j}_{A,\,\kk\!G}^{|G|}.$$
\end{cor}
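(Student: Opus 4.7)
The plan is to recognize this corollary as an immediate specialization of Theorem \ref{main-thm} to the classical setting, where $H = \kk G$ is the group algebra of a finite reflection group $G \subset \mathrm{GL}(V)$ and $A = S(V)$ is the symmetric algebra. I first need to verify that all hypotheses of Theorem \ref{main-thm} are satisfied in this commutative context.

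First, I would note that $\kk G$ is a finite-dimensional semisimple Hopf algebra (since $\mathrm{char}\,\kk = 0$ and $G$ is finite, by Maschke's theorem), and $A = S(V)$ is a Noetherian connected graded AS regular domain when $V$ is finite-dimensional. The $G$-action on $V$ extends to a homogeneous action on $A$ by algebra automorphisms; inner faithfulness follows from the faithfulness of the representation $G \hookrightarrow \mathrm{GL}(V)$. By the classical Chevalley-Shephard-Todd theorem (encoded in Theorem \ref{reflec-group-lem}(1)), $R = A^G$ is itself a graded polynomial algebra, hence AS regular, which means $\kk G$ acts on $A$ as a reflection Hopf algebra in the sense of the paper. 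Because $A$ is commutative, every subalgebra — in particular $R$ — is automatically central in $A$, so the centrality hypothesis of Theorem \ref{main-thm} is trivially met.

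Next I would determine the rank $n$ appearing in Theorem \ref{main-thm}. By Theorem \ref{reflec-group-lem}(2), $A$ is a free $R$-module of rank $|G|$, so $n = |G|$. Applying Theorem \ref{main-thm} directly then gives
\[
d(A, R; \tr) =_{\kk^\times} \mathfrak{j}_{A,\,\kk\!G}^{\,|G|},
\]
which is precisely the desired formula. Optionally, one could substitute the explicit expression $\mathfrak{j}_{A,\,\kk\!G} =_{\kk^\times} \prod_{U \in \A(G)} \alpha_U^{e_U - 1}$ from Theorem \ref{reflec-group-lem}(3) to rewrite this in terms of the hyperplane data.

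There is essentially no obstacle here: the corollary is a clean specialization, and all the nontrivial work has been done upstream in Theorem \ref{main-thm} and in the classical invariant-theoretic results collected in Theorem \ref{reflec-group-lem}. The only thing to double-check is the matching of conventions — in particular, confirming that the Hattori-Stallings trace map used in Theorem \ref{main-thm} coincides with the classical trace used to define the discriminant of $S(V)$ over $S(V)^G$, but this follows immediately from Lemma \ref{HS-trace}(4) since $A$ is $R$-free and the two traces agree on free modules.
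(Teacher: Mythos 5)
Your proposal is correct and follows exactly the route the paper intends: the paper derives this corollary simply by "applying Theorem \ref{main-thm}," and your verification of the hypotheses (semisimplicity of $\kk G$, AS regularity of $A^G$ via Shephard--Todd--Chevalley, centrality from commutativity, and $n=|G|$ from Theorem \ref{reflec-group-lem}(2)) is precisely the specialization required. No issues.
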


Let $\mathcal{S}_n$ be the symmetric group acting on $A := \kk[x_1, \cdots, x_n]$ as permutations of $\{x_i\}_{i=1}^n$ and $A^{\mathcal{S}_n}$ be the invariant subring.
It is a well-known that $A^{\mathcal{S}_n}$ is generated by 
$$x_1 + x_2 + \cdots + x_n, \qquad x_1^2 + x_2^2 + \cdots + x_n^2, \qquad \cdots, \qquad x_1^n + x_2^n + \cdots + x_n^n$$
as an algebra.
Then by Theorem \ref{reflec-group-lem}, the Jacobian of $\mathcal{S}_n$-action on $A$ is
$$\mathfrak{j}_{A,\,\kk\!\mathcal{S}_n} = \det
\begin{pmatrix}
	1 & x_1 & x_1^2 & \cdots & x_1^{n-1} \\
	1 & x_2 & x_2^2 & \cdots & x_2^{n-1} \\
	1 & x_3 & x_3^2 & \cdots & x_3^{n-1} \\
	\vdots & \vdots & \vdots & \ddots & \vdots \\
	1 & x_{n} & x_n^{2} & \cdots & x_{n}^{n-1} \\
\end{pmatrix} = \prod_{i>j}(x_i-x_j).$$


It was asked in \cite[Question 4.7.]{GKM} that whether the discriminant of $A$ over $A^{\mathcal{S}_n}$ respect to $\tr$ is $\mathfrak{j}_{A,\,\kk\!\mathcal{S}_n}^{n!}$. This question was resolved by Liu and Du in \cite{LD}. It also can be seen as a special case of Corollary \ref{Di-fixed-ring}.

\begin{cor}\label{Sym-di-fixed-ring}\cite[Theorem 1.2]{LD}
	$d(A, A^{\mathcal{S}_n}; \tr) =_{\kk^{\times}} \prod\limits_{i>j}(x_i-x_j)^{n!}$.
\end{cor}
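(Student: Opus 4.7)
The plan is to deduce this corollary as an immediate application of Corollary \ref{Di-fixed-ring}, since the group $\mathcal{S}_n$ acts on $A = \kk[x_1,\dots,x_n] = S(V)$ (with $V = \kk x_1 \oplus \cdots \oplus \kk x_n$) by permuting the chosen basis, and each transposition $(i\,j)$ fixes the hyperplane $\{x_i = x_j\}$ pointwise. Thus $\mathcal{S}_n$ is generated by reflections and embeds into $\mathrm{GL}(V)$ as a finite reflection group, so the hypotheses of Corollary \ref{Di-fixed-ring} are satisfied.

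Next I would identify the Jacobian. The text already exhibits a set of algebra generators of $A^{\mathcal{S}_n}$, namely the power-sum symmetric polynomials $f_k = x_1^k + \cdots + x_n^k$ for $k = 1,\dots,n$, and writes down $\det(\partial f_i/\partial x_j)$ as the transpose (up to constants from differentiation) of the Vandermonde matrix. By Theorem \ref{reflec-group-lem}(3), this determinant equals the Jacobian up to a scalar, giving
$$\mathfrak{j}_{A,\,\kk\mathcal{S}_n} =_{\kk^{\times}} \prod_{i>j}(x_i - x_j).$$

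Finally, I would plug $|{\mathcal{S}_n}| = n!$ into Corollary \ref{Di-fixed-ring} to obtain
$$d(A, A^{\mathcal{S}_n};\tr) =_{\kk^{\times}} \mathfrak{j}_{A,\,\kk\mathcal{S}_n}^{n!} =_{\kk^{\times}} \prod_{i>j}(x_i-x_j)^{n!}.$$
There is no real obstacle here: the result is genuinely just the specialization of Corollary \ref{Di-fixed-ring} to the reflection group $\mathcal{S}_n$, once one identifies the Jacobian with the Vandermonde determinant. The only point that warrants a brief mention in a careful write-up is that the power-sum symmetric polynomials do form a system of algebraically independent generators for $A^{\mathcal{S}_n}$ in characteristic zero (which is our standing hypothesis), so that Theorem \ref{reflec-group-lem}(3) is directly applicable to them.
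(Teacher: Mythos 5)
Your proposal is correct and matches the paper's treatment exactly: the paper likewise presents this as the specialization of Corollary \ref{Di-fixed-ring} to $G = \mathcal{S}_n$, identifying $\mathfrak{j}_{A,\,\kk\mathcal{S}_n}$ with the Vandermonde determinant via the Jacobian matrix of the power-sum generators (Theorem \ref{reflec-group-lem}(3)) and then raising it to the power $|\mathcal{S}_n| = n!$.
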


\section{Discriminant of Hopf-Galois extension} \label{section 5}

In this section, we study the discriminant of Hopf Galois extension, which is applied to the case of smash product.

Let's recall the definition of Hopf Galois extension.

\begin{defn}
	Let $H$ be a Hopf algebra and $B$ be a right $H$-comodule algebra. Then the ring extension $B^{\mathrm{co}H} := \{ b \in B \mid \sum_{(b)} b_0 \otimes b_1 = b \otimes 1 \} \subseteq B$ is $H$-{\it Galois} if the morphism
	$$\beta: B \otimes_{B^{\mathrm{co}H}} B \To B \otimes H, \qquad b' \otimes b \mapsto \sum_{(b)} b'b_0 \otimes b_1$$
	is bijective.
\end{defn}

In this section, let $H$ be a finite-dimensional Hopf algebra and $A:=B^{\mathrm{co}H} \subseteq B$ be an $H$-Galois extension. Let $0 \neq \alpha$ be a left integral of $H^*$, that is, $\sum_{(h)} h_1 \langle \alpha, h_2 \rangle = \langle \alpha, h \rangle $ for all $h \in H$. Recall that $H^*$ is a right $H$-module via $\leftharpoondown$, as follows: if $f \in H^*$, and $h,k \in H$, then $\langle f\!\leftharpoondown\!h, k \rangle = \langle f, kS(h) \rangle$, and there is a left $H$-module isomorphism $H \to H^*, \; h \, \mapsto \, \alpha\!\leftharpoondown\!h$.
Choose $t \in H$ which is satisfying that $\alpha\!\leftharpoondown t = \varepsilon$. Hence $t$ is a right integral of $H$, that is, $th = \varepsilon(h)t$ for all $h \in H$. Notice that for any $h \in H$,
\begin{equation}\label{integral-separable-idempotent}
	\sum_{(t)} h S(t_1) \otimes t_2 = \sum_{(t), (h)} h_3 S(t_1) \otimes t_2 S^{-1}(h_2) h_1 = \sum_{(t), (h)} S(t_1 S^{-1}(h_3)) \otimes t_2 S^{-1}(h_2) h_1 = \sum_{(t)} S(t_1) \otimes t_2 h.
\end{equation}

Let $x_i,y_i \in B$ such that
$$\beta(\sum_{i=1}^n x_i \otimes y_i) = \sum_{i=1}^n \sum_{(y_i)} x_i(y_i)_0 \otimes (y_i)_1 = 1 \otimes t,$$
and there is an $A$-$A$-bimodule morphism $$\theta: B \To A, \qquad b \, \mapsto \, \alpha \rhu b := \sum_{(b)} b_0 \langle \alpha, b_1 \rangle.$$

The $H$-Galois extension $A \subseteq B$ is a Frobenius extension, which is proved by Kreimer and Takeuchi (\cite[Theorem 1.7]{KT}).

\begin{lem} \label{H-Galois-Frob}
	Then $A \subseteq B$ is a Frobenius extension with a Frobenius system $(\theta, x_i, y_i)$.
\end{lem}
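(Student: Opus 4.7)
The strategy is to check directly that $(\theta, x_i, y_i)$ satisfies the dual-basis criterion for a Frobenius system recorded in Section \ref{section 1}. That is, I will verify that $\theta\colon B \to A$ is an $A$-$A$-bimodule map and that
\[ \sum_i x_i\, \theta(y_i b) \;=\; b \;=\; \sum_i \theta(b x_i)\, y_i \]
for every $b \in B$. Once these identities are in place, finite generation and projectivity of $B$ as a right (or left) $A$-module, and the Frobenius isomorphism $\Theta\colon B \to \Hom_A(B,A)$, follow automatically from the general discussion preceding the lemma.

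First I would show that $\theta$ lands in $A = B^{\mathrm{co}H}$ and is $A$-$A$-bilinear. Applying $\rho$ to $\theta(b) = \sum b_0 \langle \alpha, b_1 \rangle$ and using coassociativity rewrites $\rho(\theta(b))$ as $\sum b_0 \otimes b_{1,1}\langle \alpha, b_{1,2} \rangle$; the left-integral property $\sum h_1 \langle \alpha, h_2 \rangle = \langle \alpha, h \rangle \cdot 1_H$ then collapses this to $\theta(b)\otimes 1$. Bilinearity over $A$ is immediate from coinvariance of $A$-elements, since $\rho(ab) = \sum ab_0\otimes b_1$ and $\rho(ba) = \sum b_0 a \otimes b_1$ for $a\in A$.

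Next, for the first dual-basis identity I compute
\[ \sum_i x_i\, \theta(y_i b) \;=\; \sum_i x_i (y_i)_0 b_0\, \langle \alpha, (y_i)_1 b_1 \rangle. \]
Viewing $B \otimes H$ as a tensor product of algebras, the defining Galois identity $\sum_i x_i(y_i)_0 \otimes (y_i)_1 = 1 \otimes t$ multiplied on the right by $\sum b_0 \otimes b_1$ rewrites the right-hand side as $\sum b_0\, \langle \alpha, t b_1 \rangle$. Since $t$ is a right integral, $tb_1 = \varepsilon(b_1) t$, and the expression reduces to $b\cdot \langle \alpha, t \rangle = b$ under the normalization built into $\alpha\!\leftharpoondown\! t = \varepsilon$.

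The second identity $\sum_i \theta(bx_i)\, y_i = b$ is the main obstacle, because $y_i$ sits outside the coaction and cannot be absorbed into the comultiplication as in the previous step. My plan is to transport the equation through the Galois isomorphism $\beta$: the element $\sum_i x_i \otimes y_i = \beta^{-1}(1\otimes t)$ in $B\otimes_A B$ is encoded by the integral $t$, and the identity \eqref{integral-separable-idempotent}, expressing that $\sum_{(t)} S(t_1)\otimes t_2$ satisfies a centralizer relation for the $H$-bimodule structure, translates into a compatibility between $\sum_i x_i \otimes y_i$ and the left $B$-action in $B \otimes_A B$. Pairing this compatibility with $\theta$ and multiplying out yields the second dual-basis equation; equivalently, one argues that $B$ is left-right symmetric as a Frobenius setup by invoking $S^{-1}$ and reducing the second equation to the first applied to the opposite Galois extension. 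The delicate point will be matching conventions (antipode, side of integrals, direction of $\leftharpoondown$) so that \eqref{integral-separable-idempotent} combines with the already-established first identity to produce the second.
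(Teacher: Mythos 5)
Your verification that $\theta$ takes values in $A$ and is $A$-$A$-bilinear, and your computation of the first dual-basis identity $\sum_i x_i\theta(y_ib)=b$, match the paper's argument. But there are two genuine gaps. The smaller one: your first identity ends with $b\cdot\langle\alpha,t\rangle=b$, and you claim the normalization $\langle\alpha,t\rangle=1$ is ``built into'' $\alpha\leftharpoondown t=\varepsilon$. It is not: evaluating $\alpha\leftharpoondown t$ at $1$ gives $\langle\alpha,S(t)\rangle=1$, not $\langle\alpha,t\rangle=1$. Since $H$ is only assumed finite-dimensional in this lemma (not semisimple), one cannot dismiss the difference, and the paper devotes the displayed computation \eqref{H^*-integral-H} --- using the left-integral property of $\alpha$ together with \eqref{integral-separable-idempotent} --- precisely to establish $\langle\alpha,t\rangle=\langle\alpha,S(t)\rangle=1$. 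You need to supply this step.

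The more serious gap is that the identity $\sum_i\theta(bx_i)y_i=b$, which you yourself single out as the main obstacle, is never actually proved: you outline a plan (transport through $\beta$, a ``centralizer relation'' extracted from \eqref{integral-separable-idempotent}, or a reduction to the opposite Galois extension) and explicitly defer the ``delicate point'' of matching conventions. As written this is a strategy, not an argument; in particular, reducing to the first identity for the opposite extension would itself require showing that $A^{op}\subseteq B^{op}$ is Galois for a suitably twisted coaction and that $\sum_i y_i\otimes x_i$ is the $\beta$-preimage of $1$ tensored with an appropriate integral there, none of which is addressed. The paper's proof of this identity is in fact direct and does not use \eqref{integral-separable-idempotent} at all: write $\sum_i\theta(bx_i)y_i=\sum b_0(x_i)_0\langle\alpha,b_1(x_i)_1\rangle y_i$, insert $(y_i)_1S((y_i)_2)$ inside the pairing using $\sum(y_i)_0\otimes(y_i)_1S((y_i)_2)=y_i\otimes 1$ to get $\sum b_0(x_i)_0(y_i)_0\langle\alpha,b_1(x_i)_1(y_i)_1S((y_i)_2)\rangle$; applying $\rho\otimes S$ to the Galois relation $\sum_i x_i(y_i)_0\otimes(y_i)_1=1\otimes t$ yields $\sum_i(x_i)_0(y_i)_0\otimes(x_i)_1(y_i)_1\otimes S((y_i)_2)=1\otimes1\otimes S(t)$, so the sum collapses to $\sum b_0\langle\alpha,b_1S(t)\rangle=\sum b_0\langle\alpha\leftharpoondown t,b_1\rangle=b$ --- and here the normalization is used exactly in the form $\alpha\leftharpoondown t=\varepsilon$ that you already have. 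You should replace your sketch with this computation, or complete your own route in full.
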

\begin{proof}
	Clearly, we only need to prove that $(\theta, x_i, y_i)$ is a Frobenius system. For all $b \in B$,
	\begin{align*}
	\sum_{i=1}^n \theta(bx_i)y_i & = \sum_{i=1}^n \sum_{(b), (x_i)} b_0 (x_i)_0 \langle \alpha, b_1(x_i)_1 \rangle y_i \\
	& = \sum_{i=1}^n \sum_{(b), (x_i), (y_i)} b_0 (x_i)_0(y_i)_0 \langle \alpha, b_1(x_i)_1 (y_i)_1 S(y_i)_2 \rangle \\
	& = \sum_{(b)} b_0 \langle \alpha, b_1S(t) \rangle = \sum_{(b)} b_0 \langle \alpha \leftharpoondown t, b_1 \rangle \\
	& = b.
	\end{align*}
    We also have that
    \begin{eqnarray}\label{H^*-integral-H}
    	\begin{split}
    		\langle \alpha, t \rangle & = \langle \alpha \leftharpoondown t, 1 \rangle \langle \alpha, t \rangle = \langle \alpha, S(t) \rangle \langle \alpha, t \rangle = \langle \alpha, S(t)\langle \alpha, t \rangle \rangle = \langle \alpha, S(t) S(\langle \alpha, t \rangle) \rangle \\
    		& = \sum_{(t)} \langle \alpha, S(t) S(t_1 \langle \alpha, t_2 \rangle) \rangle = \sum_{(t)} \langle \alpha, S(t)S(t_1) \rangle \langle \alpha, t_2 \rangle \\
    		& \stackrel{\eqref{integral-separable-idempotent}}{=\!=} \sum_{(t)} \langle \alpha, S(t_1) \rangle \langle \alpha, t_2S(t) \rangle = \sum_{(t)} \langle \alpha, S(t_1) \rangle \langle \alpha \leftharpoondown t, t_2 \rangle \\
    		& = \langle \alpha, S(t) \rangle \\
    		& = 1.
    	\end{split}
    \end{eqnarray}
    It follows that for all $b \in B$,
    $$\sum_{i=1}^n x_i \theta(y_ib) = \sum_{i=1}^n\sum_{(b), (y_i)} x_i (y_i)_0 b_0 \langle \alpha, (y_i)_1b_1 \rangle = \sum_{(b)} b_0 \langle \alpha, tb_1 \rangle = b \langle \alpha, t \rangle = b.$$
\end{proof}

The following lemma about the trace map of separable algebra is well known.

\begin{lem}\cite[Theorem 9.31]{R}\label{left-right-mul}
	Let $\Lambda$ be a separable algebra. Then $\tr_{\Lambda}(x_L) = \tr_{\Lambda}(x_R)$ for any $x \in \Lambda$, where $x_L$ and $x_R$ are the left and right multiplications by $x$, respectively.
\end{lem}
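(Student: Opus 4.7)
The claim is that $\tr_{\Lambda}(x_L) = \tr_{\Lambda}(x_R)$ for every $x \in \Lambda$, where $\Lambda$ is separable over some commutative base ring $R$; equivalently, the Hattori-Stallings trace of the difference $x_L - x_R \in \End_R(\Lambda)$ vanishes, and this difference is just the inner derivation $y \mapsto [x,y]$.

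My proof strategy is a standard reduction to matrix algebras. First, I would observe that $x \mapsto \tr_{\Lambda}(x_L) - \tr_{\Lambda}(x_R)$ is an $R$-linear map $\Lambda \to R$ and that the Hattori-Stallings trace is compatible with base change along any homomorphism $R \to R'$; so by passing to localizations and residue fields, it suffices to prove the identity when $R = K$ is a field. Over a field, a separable algebra $\Lambda$ becomes, after base change to $\overline{K}$, a finite product $\prod_i M_{n_i}(\overline{K})$, and since the Hattori-Stallings trace splits compatibly with direct products, I reduce to the case $\Lambda = M_n(K)$ for some field $K$.

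For the matrix algebra, a short direct computation in the basis $\{E_{ij}\}$ of matrix units yields $\tr((E_{ij})_L) = n\,\delta_{ij} = \tr((E_{ij})_R)$, so by $K$-linearity both $\tr(x_L)$ and $\tr(x_R)$ coincide with $n$ times the usual matrix trace of $x$, which settles the claim in this case.

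The main obstacle is making the reduction to a field base fully rigorous: the trace equality is an identity of elements of $R$, and one must verify that these elements coincide after every specialization, which ultimately rests on $\Lambda$ being a finitely generated projective $R$-module (a consequence of separability). An alternative approach that avoids the reduction entirely uses a separability idempotent $e = \sum_i a_i \otimes b_i \in \Lambda \otimes_R \Lambda^{\mathrm{op}}$ with $\sum_i a_i b_i = 1$ and $(x \otimes 1)\,e = (1 \otimes x)\,e$ for all $x \in \Lambda$; the plan there is to exhibit $x_L - x_R \in \End_R(\Lambda)$ as a sum of additive commutators $\sum_j [f_j, g_j]$, after which the cyclicity $\tr(fg) = \tr(gf)$ from Lemma \ref{HS-trace}(3) forces $\tr(x_L - x_R) = 0$.
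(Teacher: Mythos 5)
The paper offers no proof of this lemma at all: it is quoted verbatim from Reiner \cite[Theorem 9.31]{R}, so there is no internal argument to compare against. Your main line of attack --- extend scalars to $\overline{K}$, use that a separable algebra becomes a finite product $\prod_i M_{n_i}(\overline{K})$, and check $\tr\bigl((E_{ij})_L\bigr) = n\,\delta_{ij} = \tr\bigl((E_{ij})_R\bigr)$ on matrix units so that both traces equal $n$ times the ordinary matrix trace --- is correct and is essentially the standard textbook proof. It also fully covers the only situation in which the paper invokes the lemma, namely $\Lambda = H$ a finite-dimensional separable algebra over the field $\kk$, where the trace is the ordinary $\kk$-linear trace and is preserved under field extension.

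Two caveats. First, your reduction from a general commutative base ring $R$ to fields ``by passing to localizations and residue fields'' does not work as stated: an element of $R$ that dies in every residue field $k(\mathfrak{p})$ only lies in the nilradical, so the argument proves $\tr(x_L)-\tr(x_R)$ is nilpotent, not zero, unless $R$ is reduced. If you want the general statement you should either restrict to a field base (as both Reiner and this paper effectively do) or give a base-ring-free argument. Second, your ``alternative approach'' via a separability idempotent is, as sketched, circular over a field: in $\End_K(\Lambda)\cong M_N(K)$ the sums of additive commutators are exactly the trace-zero endomorphisms, so ``exhibit $x_L-x_R$ as a sum of commutators'' is a restatement of the claim rather than a method, until you actually produce the decomposition. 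What the separability identity $(x\otimes 1)e=(1\otimes x)e$ gives you directly is $x_L E = x_R E$ for the Casimir map $E(y)=\sum_i a_i y b_i$ (the projection onto the center), which is genuinely weaker than the full claim; closing the gap on the complementary summand $1-E$ requires an additional idea, so I would not present this route as an established alternative.
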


Now we have a result about the Hattori-Stallings map of Hopf Galois extensions.

\begin{lem}\label{trace-Galois-extension}
	Let $H$ be a finite-dimensional semisimple Hopf algebra, and $B$ be a right $H$-comodule algebra. Suppose that $A:= B^{\mathrm{co}H} \subseteq B$ is an $H$-Galois extension. Let $\tr_{B_A}: B \to A/[A,A]$ be the Hattori-Stallings map. Then
	$$\tr_{B_A}(b) + [B, B] = (\varepsilon(t) \alpha) \rhu b + [B, B] \qquad \text{ in } \; B/[B, B].$$
	Further, if $B$ is prime and $A$ is central in $B$, then $\tr_{B_A}(b) = (\varepsilon(t) \alpha) \rhu b$ for all $b \in B$.
\end{lem}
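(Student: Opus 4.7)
The plan is to compute $\tr_{B_A}(b)$ directly from the Frobenius system of Lemma \ref{H-Galois-Frob}, and then cyclically shift inside $B/[B,B]$ to collapse the sum.

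First, since $(\theta, x_i, y_i)$ is a Frobenius system for $A \subseteq B$, the formula \eqref{Frob-trace} gives
$$\tr_{B_A}(b) = \sum_{i=1}^n \theta(y_i b x_i) + [A, A].$$
Second, applying $\mathrm{id}_B \otimes \varepsilon$ to the defining relation $\sum_i x_i(y_i)_0 \otimes (y_i)_1 = 1 \otimes t$ yields
$$\sum_{i=1}^n x_i y_i = \varepsilon(t)\cdot 1_B,$$
so if one could slide each $x_i$ past $y_ib$ inside $\theta$ modulo $[B,B]$, the sum would telescope to $\theta(\varepsilon(t)b) = \varepsilon(t)\theta(b) = (\varepsilon(t)\alpha)\rhu b$.

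The key input for this sliding is that $\alpha$ is a trace on $H$: since $H$ is semisimple, it is also cosemisimple and involutory ($S^2 = \mathrm{id}_H$, by Larson--Radford), so the Nakayama automorphism of the Frobenius algebra $H$ is trivial and $\langle\alpha, hk\rangle = \langle\alpha, kh\rangle$ for all $h,k \in H$. Combined with the multiplicativity $\rho(ab) = \sum a_0 b_0 \otimes a_1 b_1$ of the coaction, this gives
\begin{align*}
\theta(ab) - \theta(ba) &= \sum a_0 b_0 \langle\alpha, a_1 b_1\rangle - \sum b_0 a_0 \langle\alpha, b_1 a_1\rangle \\
&= \sum (a_0 b_0 - b_0 a_0)\langle\alpha, a_1 b_1\rangle \in [B, B]
\end{align*}
for all $a, b \in B$. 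Applying this with the pair $(y_i b,\, x_i)$ yields $\theta(y_i b x_i) \equiv \theta(x_i y_i b) \pmod{[B,B]}$, and summing over $i$ together with the identity $\sum_i x_iy_i = \varepsilon(t)$ produces
$$\tr_{B_A}(b) + [B,B] = \varepsilon(t)\,\theta(b) + [B,B] = (\varepsilon(t)\alpha)\rhu b + [B,B],$$
which is the first claim.

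For the sharper conclusion, the difference $\tr_{B_A}(b) - (\varepsilon(t)\alpha)\rhu b$ lies in $A \cap [B, B]$. Under the hypothesis that $B$ is prime with $A$ central, $A$ is a commutative domain (a subring of $Z(B)$), and localising at the central Ore set $A \setminus 0$ shows that $B \otimes_A K$, with $K := \mathrm{Frac}(A)$, is a prime algebra of finite $K$-dimension, hence simple Artinian. In any such $K$-algebra $\Lambda$, the reduced trace vanishes on $[\Lambda, \Lambda]$ while being injective on $K \subseteq Z(\Lambda)$, so $K \cap [\Lambda, \Lambda] = 0$. Hence the image of $A \cap [B,B]$ in $B \otimes_A K$ is zero, and since $A$ embeds in $K$, we get $A \cap [B, B] = 0$, which upgrades the above congruence to an equality. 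The main obstacle is the trace property of $\alpha$; once it is established, the rest is formal bookkeeping with the Frobenius system and the standard structure theory of prime PI algebras.
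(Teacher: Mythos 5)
Your proof is correct, and while it follows the same overall skeleton as the paper's (expand $\tr_{B_A}(b)=\sum_i\theta(y_ibx_i)$ via the Frobenius system, permute cyclically modulo $[B,B]$, collapse using the integral), it reorganizes the Hopf-algebraic core in a genuinely different and arguably cleaner way. The paper never isolates the statement that $\alpha$ is a trace form on $H$; instead it inserts $\sum (y_i)_1S((y_i)_2)$, recognizes the resulting pairing $\sum_{(t)}\langle\alpha,t_2hS(t_1)\rangle$ as $\tr_H(h_L)$ via \eqref{trace-Hopf-alg}, and then invokes separability of $H$ (Lemma \ref{left-right-mul}) to replace $h_L$ by $h_R$ and untangle the sum. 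You instead prove the one-line sliding identity $\theta(ab)\equiv\theta(ba)\pmod{[B,B]}$ directly from multiplicativity of the coaction plus the trace property $\langle\alpha,hk\rangle=\langle\alpha,kh\rangle$, and then use $\sum_ix_iy_i=\varepsilon(t)$ (obtained by applying $\id\otimes\varepsilon$ to $\beta(\sum x_i\otimes y_i)=1\otimes t$) to telescope. These two routes rest on the same underlying fact — namely $\varepsilon(t)\alpha=\tr_H((\cdot)_L)$, i.e.\ the integral is proportional to the regular character — but your packaging makes the role of semisimplicity more transparent and avoids the $(y_i)$-coproduct gymnastics. The one step you should tighten is the justification that $\alpha$ itself (not merely some Frobenius form on $H$) is a trace: "the Nakayama automorphism of $H$ is trivial" must be asserted for the Nakayama automorphism \emph{associated to the form $\alpha$}, which by Radford's formula is $S^2$ twisted by the distinguished group-like elements; it is the identity here because $H$ is unimodular (semisimple), $H^*$ is unimodular (cosemisimple), and $S^2=\id$ by Larson--Radford. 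Equivalently, and most directly, the space of integrals in $H^*$ is one-dimensional and contains the regular character $\chi_{\mathrm{reg}}=\tr_H((\cdot)_L)$, which is nonzero since $\chi_{\mathrm{reg}}(1)=\dim H\neq 0$ in characteristic zero, so $\alpha=_{\kk^{\times}}\chi_{\mathrm{reg}}$ is a trace. Your argument for the second assertion (localize at the regular central set $A\setminus\{0\}$, observe the localization is simple Artinian of finite dimension over $K$, and use that the reduced trace kills $[\,\cdot\,,\cdot\,]$ but not $K$) is the same as the paper's appeal to Posner's theorem and is complete.
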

\begin{proof}
	Note that $h \stackrel{\eqref{H^*-integral-H}}{=\!=} hS(\langle \alpha, t \rangle) = \sum_{(t)} hS(t_1 \langle \alpha, t_2 \rangle) \stackrel{\eqref{integral-separable-idempotent}}{=\!=} \sum_{(t)} S(t_1) \langle \alpha, t_2h \rangle$.
	For any $h_i \in H$ and $f_i \in H^*$, write $F = \varphi_H(\sum_{i=1}^n h_i \otimes f_i) \in \End_{\kk}(H)$, thus
	\begin{equation}\label{trace-Hopf-alg}
		\begin{split}
			\tr_{H}(F) & = \sum_{i=1}^n \langle f_i, h_i \rangle = \sum_{i=1}^n \sum_{(t)} \langle f_i, S(t_1) \langle \alpha, t_2h_i \rangle \rangle \\
			& = \sum_{i=1}^n \sum_{(t)} \langle \alpha, t_2h_i\langle f_i, S(t_1) \rangle \rangle \\
			& =  \sum_{(t)} \langle \alpha, t_2F(St_1) \rangle.
		\end{split}
	\end{equation}
	
	Since $H$ is semisimple, then $S^2 = \id_H$ and $H$ is a separable algebra.
	Therefore
	$\tr_{H}(h_L) = \tr_{H}(h_R)$ by Lemma \ref{left-right-mul}, where $h_L, h_R \in \End_{\kk}(H)$ are the left multiplication and right multiplication by $h \in H$, respectively.

	For any $b \in B$,
	\begin{align*}
		\tr_{B_A}(b) + [B, B] & \stackrel{\eqref{Frob-trace}}{=\!=} \sum_{i=1}^N \theta(y_ibx_i) + [B, B] \\
		& = \sum_{i=1}^N \sum_{(b)} (y_i)_0 b_0 (x_i)_0 \langle \alpha, (y_i)_1 b_1 (x_i)_1 \rangle + [B, B] \\
		& = \sum_{i=1}^N \sum_{(b)} b_0 (x_i)_0 (y_i)_0 \langle \alpha, (y_i)_1 b_1 (x_i)_1 \rangle + [B, B] \\
		& = \sum_{i=1}^N \sum_{(b)} b_0 (x_i)_0 (y_i)_0 \langle \alpha, (y_i)_3 b_1 (x_i)_1 (y_i)_1S(y_i)_2 \rangle + [B, B] \\
		& = \sum_{(\alpha)} \sum_{(b)} b_0 \langle \alpha, t_2 b_1 S(t_1) \rangle + [B, B] \\
		& \stackrel{\eqref{trace-Hopf-alg}}{=\!=} \sum_{(b)} b_0 \tr_{H}((b_1)_L) + [B, B] \\
		& = \sum_{(b)} b_0 \tr_{{H}}((b_1)_R) + [B, B] \\
		& \stackrel{\eqref{trace-Hopf-alg}}{=\!=} \sum_{(b), (t)} b_0 \langle \alpha, t_2S(t_1)b_1 \rangle + [B, B] \\
		& = (\varepsilon(t)\alpha) \rhu b + [B, B].
	\end{align*}

	If $B$ is a prime ring and $A$ is central in $B$, then $B$ is a PI prime ring. This implies that the quotient ring $Q(B)$ of $B$ is a central simple algebra by Posner's theorem \cite[Theorem 13.6.5]{MR}.
	It follows that $[B, B] \cap A = 0$. So $\tr_{B_A}(b) = (\varepsilon(t) \alpha) \rhu b$ for any $b \in B$.
\end{proof}


The second conclusion of Lemma \ref{trace-Galois-extension} is false when $A$ is not a central subalgebra of $B$, see following example which was given in \cite{KKZ}.

\begin{ex}
	Consider the Example \ref{disc-ex} with $n = 2$. Clearly $t = (1+x)(1+y)(1+z)$ is an integral of $H$ with $\varepsilon(t) = 8$.
	Recall that $B = \kk\langle u, v \rangle / (vu - iuv)$ is a left $H$-module algebra via
	$$x \rhu u = - u, \quad x \rhu v = v, \quad y \rhu u = u, \quad y \rhu v = -v, \quad z \rhu u = v , \quad z \rhu v = u.$$
	By \cite[Example 7.4]{KKZ}, the fixed subring $B^H$ is the polynomial ring $\kk[u^2+v^2, u^2v^2]$. Hence $H$ acts on $B$ as a reflection Hopf algebra. But $B^H$ is not a central subalgebra of $B$.
	It is clear that $B$ is a free $B^H$-module of rank $8$, with a basis $\{ 1, u, v, u^2, uv, u^3, u^2v, u^3v\}$.
	Notice that the quotient ring $Q(B)$ is also an $H$-module algebra by \cite[Theorem 2.2]{SVO}. According to \cite[Theorem 8.3.7]{Mon}, it follows that $Q(B^H) = Q(B)^H \subseteq Q(B)$ is an $H^*$-Galois extension. By a direct computation, $\tr(u^2) \in [Q(B)^H, Q(B)^H] = 0$, but $t \rhu u^2 = 4(u^2+v^2) \neq 0$.
\end{ex}

Next we provide a formula for computing the discriminant of Hopf Galois extension over a central subalgebra.

\begin{lem}\label{Hopf-Galois-disc}
	Let $H$ be a finite-dimensional semisimple Hopf algebra, $A:= B^{\mathrm{co} H} \subseteq B$ be a $H$-Galois extension, and $R\;(\subseteq A)$ be a central subalgebra of $B$. Suppose that $A$ is a finitely generated free $R$-module, and that $B$ is a finitely generated free right $A$-module of rank $m$. Then
	$$d(B, R; \tr_{B_R}) =_{\kk^{\times}} d(A, R; \tr_{A_R})^m.$$
\end{lem}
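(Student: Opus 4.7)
The proof combines the transitivity of the Hattori–Stallings trace (Lemma \ref{composition-trace}) with the Frobenius structure of the Hopf–Galois extension $A\subseteq B$ supplied by Lemma \ref{H-Galois-Frob}. Fix a right $A$-basis $\{b_1,\dots,b_m\}$ of $B$ and an $R$-basis $\{a_1,\dots,a_n\}$ of $A$, so that $\{b_i a_j\}$ is an $R$-basis of $B$. Left multiplication by any $b\in B$ commutes with the right $A$-action, yielding an $R$-algebra embedding
$$\tilde M\colon B\hookrightarrow M_m(A),\qquad b\cdot b_j=\sum_i b_i\,\tilde M(b)_{ij},$$
and the standard identification $\tr_{B_A}(b)\equiv\mathrm{Tr}(\tilde M(b))\pmod{[A,A]}$ from Lemma \ref{HS-trace}. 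Applying Lemma \ref{composition-trace} to the tower $R\subseteq A\subseteq B$ rewrites each entry of the $mn\times mn$ Gram matrix $G_B=[\tr_{B_R}(b_ia_j\cdot b_ka_l)]$ as
$$\tr_{A_R}\bigl(\mathrm{Tr}(\tilde M(b_i)\tilde M(a_j)\tilde M(b_k)\tilde M(a_l))\bigr),$$
while the Gram matrix of $A/R$ is $G_A=[\tr_{A_R}(a_ja_l)]$.

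The target identity $\det(G_B)=_{R^{\times}}\det(G_A)^m$ is the noncommutative analogue of the classical transitivity $d(B/R)=N_{A/R}(d(B/A))\cdot d(A/R)^m$, combined with the fact that the relative discriminant $d(B/A)$ is a unit when $H$ is semisimple. The plan is to implement this concretely by exhibiting a factorisation
$$G_B = P^{\mathrm T}\cdot (I_m\otimes G_A)\cdot P$$
with $P\in M_{mn}(R)$ of unit determinant, built from the Frobenius dual basis $(x_i,y_i)$ of $A\subseteq B$. Since $\det(I_m\otimes G_A)=\det(G_A)^m$, this immediately yields the lemma. The invertibility of $P$ encodes the perfectness of the Frobenius pairing $(b,b')\mapsto\theta(bb')$ on $B$ over $A$, which in turn follows from the Hopf–Galois property together with the non-vanishing $\varepsilon(t)\in\kk^{\times}$ guaranteed by the semisimplicity of $H$ (cf.\ equation \eqref{H^*-integral-H} and Lemma \ref{trace-Galois-extension}).

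\textbf{Main obstacle.} The delicate point is the construction of $P$ and the verification that $\det(P)\in R^{\times}$. Because $A$ is in general noncommutative, the ``relative discriminant'' $d(B/A)$ is not literally defined by Definition \ref{disc-defn}; one must instead work directly with the Frobenius pairing $\theta(bb')$ and reconcile the congruence of Lemma \ref{trace-Galois-extension} (which lives modulo $[B,B]$) with the exact matrix identity needed for the determinant computation. Concretely, $\mathrm{Tr}(\tilde M(b_i)\tilde M(a_j)\tilde M(b_k)\tilde M(a_l))$ does not factor as a product of ``$a_j\cdot a_l$'' contributions and ``$b_i,b_k$'' contributions in any naive way, because $\tilde M(a_j)$ is not a scalar matrix when $a_j\notin R$. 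Overcoming this requires carefully exploiting the cyclic property of $\mathrm{Tr}$ modulo $[A,A]$ and of $\tr_{A_R}$, together with the perfectness of the Frobenius pairing, to push the $b_i,b_k$-dependence into an invertible change-of-basis matrix $P$ over $R$ and isolate $G_A$ in the middle.
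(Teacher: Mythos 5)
There is a genuine gap: what you have written is a strategy outline whose central step is explicitly left open. You correctly assemble the right ingredients (Lemma \ref{composition-trace} for the tower $R\subseteq A\subseteq B$, the Frobenius system of Lemma \ref{H-Galois-Frob}, and the identification of $\tr_{B_A}$ with $\alpha\rhu(-)$ modulo $[B,B]$ from Lemma \ref{trace-Galois-extension}), but the lemma is proved exactly at the point you defer: producing the block-diagonal form of the Gram matrix. Announcing a factorisation $G_B=P^{\mathrm T}(I_m\otimes G_A)P$ and then stating that ``the delicate point is the construction of $P$ and the verification that $\det(P)\in R^{\times}$'' does not discharge that point, and the difficulty you flag at the end (that $\mathrm{Tr}(\tilde M(b_i)\tilde M(a_j)\tilde M(b_k)\tilde M(a_l))$ does not factor) is precisely the part of the proof that is missing. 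You also do not resolve the $[B,B]$-versus-exact-equality issue you raise; the paper needs the auxiliary observation that $\tr_{A_R}$ kills $A\cap[B,B]$ (because $\tr_{B_R}|_A=m\,\tr_{A_R}$ and $\tr_{B_R}$ vanishes on $[B,B]$), and without some such statement the congruences of Lemma \ref{trace-Galois-extension} cannot be fed into the determinant.

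Your obstacle in fact dissolves if you abandon the requirement of a single symmetric congruence by one matrix $P$ and instead evaluate the Gram form on \emph{two different} $R$-bases of $B$. Take $\{x_1,\dots,x_m\}$ a right $A$-basis of $B$ and $y_1,\dots,y_m$ determined by $\beta(\sum_i x_i\otimes y_i)=1\otimes t$; then $(\theta,x_i,y_i)$ is a Frobenius system with $\theta(y_ix_j)=\delta_{ij}$ and $\theta=\alpha\rhu(-)$ an $A$-$A$-bimodule map. With an $R$-basis $\{z_k\}$ of $A$, both $\{z_{i_2}x_{i_1}\}$ and $\{y_{j_1}z_{j_2}\}$ are $R$-bases of $B$, and the mixed Gram matrix computes directly:
$$\tr_{B_R}(z_{i_2}x_{i_1}\,y_{j_1}z_{j_2})=\tr_{A_R}\bigl(\tr_{B_A}(x_{i_1}y_{j_1}z_{j_2}z_{i_2})\bigr)=\tr_{A_R}\bigl((\alpha\rhu(x_{i_1}y_{j_1}))\,z_{j_2}z_{i_2}\bigr)=\delta_{i_1j_1}\,\tr_{A_R}(z_{j_2}z_{i_2}),$$
so the matrix is $I_m\otimes G_A$ on the nose and its determinant is $\det(G_A)^m$. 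No matrix $P$ has to be constructed or inverted: passing between any two $R$-bases multiplies the Gram determinant by a unit of $R$, which is absorbed by the definition of the discriminant up to $R^{\times}$ (and here up to $\kk^{\times}$). This is where the Hopf-Galois data is actually used — the integral $t$ supplies a dual basis whose pairing matrix $[\theta(y_ix_j)]$ is the identity, which is the precise sense in which your ``relative discriminant $d(B/A)$ is a unit.'' As written, your proposal asserts this but does not prove it, so the argument is incomplete.
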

\begin{proof}
	Let $t$ be a right integral of $H$ with $\varepsilon(t) = 1$, and $\alpha$ be a left integral of $H^*$ with $\langle \alpha, t \rangle = 1$. Let $\{ x_1, \dots, x_m \}$ be a basis of right $A$-module $B$. Since $A \subseteq B$ is an $H$-Galois extension, then there exists $y_1, \dots, y_m \in B$ such that $\beta(\sum_{i=1}^m x_i \otimes y_i) = 1 \otimes t$. Recall that the map $\theta$ is defined by $\theta(b) = \alpha \rhu b$.
	By Lemma \ref{H-Galois-Frob}, $B \subseteq A$ is a Frobenius extension with a Frobenius system $(\theta, x_i, y_i)$, that is,
	$$\sum_{i=1}^m x_i \theta (y_i b) = b = \sum_{i=1}^m \theta (bx_i)y_i \text{ for all }b \in B.$$
	It follows that $B$ is a free $A$-module with a basis $\{ y_i \}_{i=1}^m$, and $\theta(y_ix_j) = \delta_{ij}$ for all $1 \leq i, j \leq m$.
	Then
	$$\tr: B \To A, \qquad b \mapsto \sum_{i=1}^m \theta(y_ibx_i) + [A, A] = \sum_{i=1}^m \alpha \rhu (y_ibx_i) + [A, A],$$
	is the Hattori-Stallings map $\tr_{B_A}$.
	By Lemma \ref{trace-Galois-extension}, for all $1 \leq i, j \leq m$,
	\begin{equation}\label{eq1}
		\tr_{B_A}(y_ix_j) + [B, B] = \alpha \rhu (y_ix_j) + [B, B] = \theta(y_ix_j) + [B, B] = \delta_{ij} + [B, B], \; \text{ in } B/[B, B].
	\end{equation}

	Since $B \cong A^{\oplus m}$ as right $R$-modules, then $\tr_{A_R}(a) = \frac{1}{m} \tr_{B_R}(a) = 0$ for any $a \in A \cap [B, B]$. For any $b \in B$, since $\tr_{B_A}(b) - \theta(b) \in A \cap [B, B]$ by Lemma \ref{trace-Galois-extension}, then
	\begin{equation}\label{eq2}
		\tr_{A_R}(\tr_{B_A}(b)) = \tr_{A_R}(\theta(b)) = \tr_{A_R}(\alpha \rhu b).
	\end{equation}

	Let $z_1, \dots, z_n \in A$ be a basis of free $R$-module $A$.
	For any $1 \leq i_1, j_1 \leq m$ and $1 \leq i_2, j_2 \leq n$, 
	\begin{align*}
		\tr_{B_R}(z_{i_2}x_{i_1} y_{j_1}z_{j_2}) & = \tr_{A_R}(\tr_{B_A}(z_{i_2}x_{i_1} y_{j_1}z_{j_2})) \\
		& = \tr_{A_R}(\tr_{B_A}(x_{i_1} y_{j_1}z_{j_2}z_{i_2})) \\
		& \stackrel{\eqref{eq2}}{=\!=} \tr_{A_R}(\alpha \rhu (x_{i_1} y_{j_1}z_{j_2}z_{i_2})) \\
		& = \tr_{A_R}((\alpha \rhu (x_{i_1} y_{j_1}))z_{j_2}z_{i_2}) \\
		& \stackrel{\eqref{eq1}}{=\!=} \delta_{i_1j_1} \tr_{A_R}(z_{j_2}z_{i_2}).
	\end{align*}
	Hence $\det([\tr_{B_R}(z_{i_2}x_{i_1} y_{j_1}z_{j_2})]_{(i_1, i_2), (j_1, j_2)}) = \det([\tr_{A_R}(z_{j_2}z_{i_2})]_{i_2, j_2 = 1}^n)^m$. Since both $\{ z_{i_2}x_{i_1} \}$ and $\{ y_{j_1}z_{j_2} \}$ are basis of $R$-module $B$, then
	$$d(B, R; \tr_{B_R}) =_{\kk^{\times}} d(A, R; \tr_{A_R})^m.$$
\end{proof}

Now let's prove the Theorem \ref{intro-main-thm 2}. Retain the notation in the Theorem \ref{main-thm}. Since $A^H$ is a central subalgebra of $A$, it follows that $A^H$ is also a central subalgebra of the smash product $A \# H$.
We infer the following theorem by using Lemma \ref{Hopf-Galois-disc}.

\begin{thm}\label{disc-smash-prod}
	Suppose that $H$ acts on $A$ as a reflection Hopf algebra, and that $R:=A^H$ is central in $A$. Let $n$ be the rank of $R$-module $A$ and $m = \dim_{\kk}(H)$. Then
	$$d(A\#H, R; \tr) =_{\kk^{\times}} \mathfrak{j}_{A, H}^{nm}.$$
\end{thm}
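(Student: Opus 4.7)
The plan is to combine Theorem \ref{main-thm} (which handles the discriminant of $A$ over $R$) with Lemma \ref{Hopf-Galois-disc} (which reduces the discriminant of a Hopf Galois extension over a central subring to the discriminant of the base). The bridge is the standard fact that the smash product realizes a Hopf Galois extension: since $H$ is finite-dimensional, the $H$-action on $A$ induces an $H^*$-coaction on $A\#H$ whose coinvariants are exactly $A$, and $A \subseteq A\#H$ is an $H^*$-Galois extension. Moreover, as a right $A$-module, $A\#H \cong H \otimes A$ is free of rank $m = \dim_{\kk} H$.

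First I would verify the hypotheses of Lemma \ref{Hopf-Galois-disc} in this setup. One checks that $R = A^H$ is central not only in $A$ but also in $A\#H$: for $r \in R$ and $h \in H$,
\[
h r \;=\; \sum_{(h)} (h_1 \rhu r)\, h_2 \;=\; \sum_{(h)} \varepsilon(h_1)\, r\, h_2 \;=\; r h
\]
inside $A\#H$, so $R$ commutes with the generating set $A \cup H$. By Lemma \ref{ref-gr-free-Frob}, $A$ is a graded free Frobenius $R$-algebra, of rank $n$ as an $R$-module. Together with the freeness of $A\#H$ over $A$ of rank $m$, the hypotheses of Lemma \ref{Hopf-Galois-disc} (with $H$ replaced by the semisimple Hopf algebra $H^*$) are satisfied.

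Applying Lemma \ref{Hopf-Galois-disc} yields
\[
d(A\#H,\, R;\, \tr) \;=_{\kk^\times}\; d(A,\, R;\, \tr)^{m},
\]
where in both cases $\tr$ denotes the Hattori-Stallings trace map; the compatibility of these traces across the tower $R \subseteq A \subseteq A\#H$ is exactly Lemma \ref{composition-trace}. Then Theorem \ref{main-thm} gives $d(A, R; \tr) =_{\kk^\times} \mathfrak{j}_{A, H}^{n}$, and substitution produces
\[
d(A\#H,\, R;\, \tr) \;=_{\kk^\times}\; \mathfrak{j}_{A, H}^{nm},
\]
as required.

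The only point that requires attention (rather than being purely formal) is confirming that Lemma \ref{Hopf-Galois-disc} is genuinely applicable here, namely that $H^*$ is semisimple so its integrals behave well: this holds because $H$ is finite-dimensional and semisimple, hence by Larson-Radford cosemisimple, so $H^*$ is semisimple. Apart from this check, the proof is a direct concatenation of the two results, with no substantive new calculation needed.
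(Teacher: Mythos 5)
Your proposal is correct and follows exactly the route the paper takes: the paper's own (very terse) proof consists of observing that $R$ is central in $A\#H$ and then invoking Lemma \ref{Hopf-Galois-disc} for the $H^*$-Galois extension $A\subseteq A\#H$ together with Theorem \ref{main-thm}. In fact you supply more of the necessary details (centrality of $R$ in $A\#H$, freeness of $A\#H$ over $A$ of rank $m$, and semisimplicity of $H^*$ via Larson--Radford) than the paper does.
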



\section*{Acknowledgments} The author is very grateful to Professor James Zhang for sharing his joint paper \cite{KWZ} with E. Kirkman and R. Won; and especially to his advisor Quanshui Wu for enlightening discussions and unceasing encouragement.

\thebibliography{plain}

%

\bibitem{AS87} M. Artin and W. F. Schelter, Graded algebras of global dimension 3, Adv. Math. 66 (1987), no. 2, 171--216.






\bibitem{BZ} J. Bell and J. J. Zhang, Zariski cancellation problem for noncommutative algebras. Selecta Math. (N.S.) 23 (2017), no. 3, 1709--1737.

\bibitem{BGS} K. Brown, I. Gordon, C. Stroppel, Cherednik, Hecke and quantum algebras as free Frobenius and Calabi-Yau extensions, J. Algebra 319 (2008), no. 3, 1007--1034.

%

\bibitem{BrZ} K. A. Brown and J. J. Zhang, Dualising complexes and twisted Hochschild (co)homology for Noetherian Hopf algebras, J. Algebra 320 (2008), no. 5, 1814--1850.


\bibitem{CPWZ1} S. Ceken, J.H. Palmieri, Y.-H. Wang, J.J. Zhang, The discriminant controls automorphism groups of noncommutative algebras, Adv. Math. 269 (2015), 551--584.

\bibitem{CPWZ2} S. Ceken, J. H. Palmieri, Y.-H. Wang, and J. J. Zhang, Invariant theory for quantum Weyl algebras under finite group action. In: Proceedings of symposia in pure mathematics, vol. 92, Lie algebras, Lie superalgebras, vertex algebras and related topics, pp. 119--135 (2016).

\bibitem{CPWZ3} S. Ceken, J. H. Palmieri, Y.-H. Wang, J. J. Zhang, The discriminant criterion and automorphism groups of quantized algebras. Adv. Math. 286 (2016), 754--801. 


\bibitem{CYZ} K. Chan, A. A. Young, J. J. Zhang, Discriminant formulas and applications, Algebra Number Theory 10 (2016), 557--596.



\bibitem{EW} P. Etingof, C. Walton, Semisimple Hopf actions on commutative domains, Adv. Math. 251 (2014), 47--61.

\bibitem{FKMW} L. Ferraro, E. Kirkman, W. F. Moore, R. Won, Robert, Three infinite families of reflection Hopf algebras, J. Pure Appl. Algebra 224 (2020), no. 8, 106315, 34 pp.




\bibitem{GKM} J. Gaddis, E. Kirkman, W. F. Moore, On the discriminant of twisted tensor products. J. Algebra 477 (2017), 29--55.

\bibitem{Gin} V. Ginzburg, Calabi-Yau algebras, arXiv:math.AG/0612139.




\bibitem{Hat} A. Hattori, Rank element of a projective module, Nagoya Math. J. 25 (1965) 113--120.


\bibitem{JZ} P. J{\o}rgensen, J. J. Zhang, Gourmet's guide to Gorensteiness, Adv. Math. 151 (2000), 313--345.


\bibitem{Kad} L. Kadison, New Examples of Frobenius Extensions, Univ. Lecture Ser., vol. 14, Amer. Math. Soc., Providence, RI, 1999.

\bibitem{K} R. Kane, Reflection Groups and Invariant Theory. CMS Books in Mathematics, Springer (2001).

\bibitem{KKZ} E. Kirkman, J. Kuzmanovich, and J. J. Zhang, Gorenstein subrings of invariants under Hopf algebra actions, J. Algebra 322 (2009), 3640--3669.

\bibitem{KKZ2} E. Kirkman, J. Kuzmanovich, and J. J. Zhang, Nakayama automorphism and rigidity of dual reflection group coactions, J. Algebra 487 (2017), 60--92. 

\bibitem{KWZ} E. Kirkman, R. Won, and J. J. Zhang, Degree bounds for Hopf actions on Artin-Schelter regular algebras, preprint, arXiv:2008.05047, 2020.

\bibitem{KZ} E. Kirkman, J. J. Zhang, The Jacobian, Reflection Arrangement and Discriminant for Reflection Hopf Algebras, Int. Math. Res. Not. IMRN, rnz380, https://doi.org/10.1093/imrn/rnz380.


\bibitem{KT} H. F. Kreimer, M. Takeuchi, Hopf algebras and Galois extensions of an algebra, Indiana Univ. Math. J. 30 (1981) 675--692.

\bibitem{LY} J. Levitt, M. Yakimov, Quantized Weyl algebras at roots of unity, Israel J. Math. 225 (2018) 681--719.

\bibitem{LeWZ} O. Lezama, Y.-H. Wang and J. J. Zhang, Zariski cancellation problem for non-domain noncommutative algebras, Math. Z. 292 (2019) 1269--1290.

\bibitem{LD}  Y.-Y. Li, X.-K. Du, Discriminants of polynomial algebras over the symmetric polynomials, Comm. Algebra 48 (2020), no. 8, 3307--3314. 




\bibitem{LWZ} D.-M. Lu, Q.-S. Wu, J. J. Zhang, A Morita cancellation problem, Canad. J. Math. 72 (2020), no. 3, 708--731.

\bibitem{LMZ} J.-F. L\"{u}, X.-F. Mao, J.J. Zhang, Nakayama automorphism and applications, Trans. Amer. Math. Soc. 369 (2017) 2425--2460.

%
%

\bibitem{MR} J. C. McConnell and J. C. Robson, Noncommutative Noetherian Rings, Revised edition, with the cooperation of L. W. Small, Graduate Studies in Mathematics, V. 30, American Mathematical Society, Providence, RI, 2001.

\bibitem{Mon} S. Montgomery, Hopf Algebras and Their Actions on Rings, CBMS Reg. Conf. Ser. Math., vol. 82, Amer. Math. Soc., Providence, 1993.


\bibitem{NVO} C. N\u{a}st\u{a}sescu, F. van Oystaeyen, Methods of Graded Rings, Lecture Notes in Mathematics, Springer, 2004.

\bibitem{NTY} B. Nguyen, K. Trampel and M. Yakimov, Noncommutative discriminants via Poisson primes, Adv. Math. 322 (2017), 269--307.

\bibitem{OT} P. Orlik, H. Terao, Arrangements of Hyperplanes, Grundlehren der Mathematischen Wissenschaften, vol. 300, Springer-Verlag, Berlin, 1992.

\bibitem{Pan} D. Pansera, A class of semisimple Hopf algebras acting on quantum polynomial algebras, in: Rings, Modules and Codes, in: Contemp. Math., vol. 727, Amer. Math. Soc., Providence, RI, 2019, pp. 303--316.

\bibitem{R} I. Reiner, Maximal Orders, London Mathematical Society Monographs, New Series, V. 28, Oxford University Press, 2003.

%

\bibitem{RRZ} M. Reyes, D. Rogalski, and J. J. Zhang, Skew Calabi-Yau algebras and homological identities, Adv. Math. 264 (2014), 308--354.





\bibitem{SVO} S. Skryabin, F. Van Oystaeyen, The Goldie Theorem for $H$-semiprime algebras, J. Algebra 305 (2006), 292--320. 

\bibitem{Sta} J. Stallings, Centerless groups--an algebraic formulation of Gottlieb's theorem, Topology 4 (1965), 129--134.

\bibitem{V} M. Van den Bergh, Existence theorems for dualizing complexes over non-commutative graded and filtered rings, J. Algebra 195 (1997), 662--679.

%



\bibitem{WZ} Q.-S. Wu, C. Zhu, Skew group algebras of Calabi-Yau algebras, J. Algebra 340 (2011), 53--76.


\bibitem{Ye1} A. Yekutieli, Dualizing complexes over noncommutative graded algebras, J. Algebra 153 (1992), 41--84.

\bibitem{Ye2} A. Yekutieli, Dualizing complexes, Morita equivalence and the derived Picard group of a ring, J. London Math. Soc. 60 (1999), 723--746.


\end{document}